\let\old@tocline\@tocline
\let\section@tocline\@tocline
\newcommand{\subsection@dotsep}{4.5}
\newcommand{\subsubsection@dotsep}{4.5}
     \leaders\hbox{$\m@th
        \mkern \subsection@dotsep mu\hbox{.}\mkern \subsection@dotsep mu$}\hfill
\let\subsection@tocline\@tocline
\let\@tocline\old@tocline
     \leaders\hbox{$\m@th
        \mkern \subsubsection@dotsep mu\hbox{.}\mkern \subsubsection@dotsep mu$}\hfill
\let\subsubsection@tocline\@tocline
\let\@tocline\old@tocline
\let\old@l@subsection\l@subsection
\let\old@l@subsubsection\l@subsubsection
\def\@tocwriteb#1#2#3{%
  \begingroup
    \@xp\def\csname #2@tocline\endcsname##1##2##3##4##5##6{%
      \ifnum##1>\c@tocdepth
      \else \sbox\z@{##5\let\indentlabel\@tochangmeasure##6}\fi}%
    \csname l@#2\endcsname{#1{\csname#2name\endcsname}{\@secnumber}{}}%
  \endgroup
  \addcontentsline{toc}{#2}%
    {\protect#1{\csname#2name\endcsname}{\@secnumber}{#3}}}%
\newlength{\@tocsectionindent}
\newlength{\@tocsubsectionindent}
\newlength{\@tocsubsubsectionindent}
\newlength{\@tocsectionnumwidth}
\newlength{\@tocsubsectionnumwidth}
\newlength{\@tocsubsubsectionnumwidth}
\newcommand{\settocsectionnumwidth}[1]{\setlength{\@tocsectionnumwidth}{#1}}
\newcommand{\settocsubsectionnumwidth}[1]{\setlength{\@tocsubsectionnumwidth}{#1}}
\newcommand{\settocsubsubsectionnumwidth}[1]{\setlength{\@tocsubsubsectionnumwidth}{#1}}
\newcommand{\settocsectionindent}[1]{\setlength{\@tocsectionindent}{#1}}
\newcommand{\settocsubsectionindent}[1]{\setlength{\@tocsubsectionindent}{#1}}
\newcommand{\settocsubsubsectionindent}[1]{\setlength{\@tocsubsubsectionindent}{#1}}
\renewcommand{\l@section}{\section@tocline{1}{\@tocsectionvskip}{\@tocsectionindent}{}{\@tocsectionformat}}%
\renewcommand{\l@subsection}{\subsection@tocline{2}{\@tocsubsectionvskip}{\@tocsubsectionindent}{}{\@tocsubsectionformat}}%
\renewcommand{\l@subsubsection}{\subsubsection@tocline{3}{\@tocsubsubsectionvskip}{\@tocsubsubsectionindent}{}{\@tocsubsubsectionformat}}%
\newcommand{\@tocsectionformat}{}
\newcommand{\@tocsubsectionformat}{}
\newcommand{\@tocsubsubsectionformat}{}
\def\csname toc@1format\endcsname{\@tocsectionformat}
\def\csname toc@2format\endcsname{\@tocsubsectionformat}
\def\csname toc@3format\endcsname{\@tocsubsubsectionformat}
\newcommand{\settocsectionformat}[1]{\renewcommand{\@tocsectionformat}{#1}}
\newcommand{\settocsubsectionformat}[1]{\renewcommand{\@tocsubsectionformat}{#1}}
\newcommand{\settocsubsubsectionformat}[1]{\renewcommand{\@tocsubsubsectionformat}{#1}}
\newlength{\@tocsectionvskip}
\newcommand{\settocsectionvskip}[1]{\setlength{\@tocsectionvskip}{#1}}
\newlength{\@tocsubsectionvskip}
\newcommand{\settocsubsectionvskip}[1]{\setlength{\@tocsubsectionvskip}{#1}}
\newlength{\@tocsubsubsectionvskip}
\newcommand{\settocsubsubsectionvskip}[1]{\setlength{\@tocsubsubsectionvskip}{#1}}
\patchcmd{\tocsection}{\indentlabel}{\makebox[\@tocsectionnumwidth][l]}{}{}
\patchcmd{\tocsubsection}{\indentlabel}{\makebox[\@tocsubsectionnumwidth][l]}{}{}
\patchcmd{\tocsubsubsection}{\indentlabel}{\makebox[\@tocsubsubsectionnumwidth][l]}{}{}
\newcommand{\@sectypepnumformat}{}
\renewcommand{\contentsline}[1]{%
  \expandafter\let\expandafter\@sectypepnumformat\csname @toc#1pnumformat\endcsname%
  \csname l@#1\endcsname}
\newcommand{\@tocsectionpnumformat}{}
\newcommand{\@tocsubsectionpnumformat}{}
\newcommand{\@tocsubsubsectionpnumformat}{}
\newcommand{\setsectionpnumformat}[1]{\renewcommand{\@tocsectionpnumformat}{#1}}
\newcommand{\setsubsectionpnumformat}[1]{\renewcommand{\@tocsubsectionpnumformat}{#1}}
\newcommand{\setsubsubsectionpnumformat}[1]{\renewcommand{\@tocsubsubsectionpnumformat}{#1}}
\renewcommand{\@tocpagenum}[1]{%
  \hfill {\mdseries\@sectypepnumformat #1}}
\let\oldappendix\appendix
\renewcommand{\appendix}{%
  \leavevmode\oldappendix%
  \addtocontents{toc}{%
    \protect\settowidth{\protect\@tocsectionnumwidth}{\protect\@tocsectionformat\sectionname\space}%
    \protect\addtolength{\protect\@tocsectionnumwidth}{2em}}%
}
\let\oldtableofcontents\tableofcontents
\renewcommand{\tableofcontents}{%
  \vspace*{-\linespacing}
  \oldtableofcontents}
\newtheorem{thm}{Theorem}
\newtheorem{theorem}[thm]{Theorem}
\newtheorem*{theorem*}{Theorem}
\newtheorem{proposition}[thm]{Proposition}
\newtheorem{corollary}[thm]{Corollary}
\newtheorem{lemma}[thm]{Lemma}
\theoremstyle{definition}
\theoremstyle{remark}
\newtheorem{remark}[thm]{Remark}
\numberwithin{equation}{section}
\newcommand{\R}{{\mathbb{R}}}
\newcommand{\C}{{\mathbb{C}}}
\newcommand{\Z}{{\mathbb{Z}}}
\def\sP{{\mathsf{P}}}
\def\sW{{\mathsf{W}}}
\def\sZ{{\mathsf{Z}}}
\def\sA{{\mathsf{A}}}
\def\sPsi{{\mathsf{\Psi}}}
\def\be{\begin{equation}}
\def\ee{\end{equation}}
\newcommand{\Sk}{\operatorname{Sk}}
\begin{document}


\title{The skein valued mirror of the topological vertex}
\author{Tobias Ekholm}
\address{Department of Mathematics and Centre for Geometry and Physics Uppsala University, Box 480, 751 06 Uppsala, Sweden \and
Institut Mittag-Leffler, Aurav 17, 182 60 Djursholm, Sweden}
\email{tobias.ekholm@math.uu.se}
\author{Pietro Longhi}
\address{Department of Physics and Astronomy, Department of Mathematics, and Centre for Geometry and Physics, Uppsala University, Box 516, 751 20 Uppsala, Sweden}
\email{pietro.longhi@physics.uu.se}
\author{Vivek Shende}
\address{Center for Quantum Mathematics, Syddansk Univ., Campusvej 55
5230 Odense Denmark \and 
Department of mathematics, UC Berkeley, 970 Evans Hall,
Berkeley CA 94720 USA}
\email{vivek.vijay.shende@gmail.com}

\thanks{TE is supported by the Knut and Alice Wallenberg Foundation, KAW2020.0307 Wallenberg Scholar and by the Swedish Research Council, VR 2022-06593, Centre of Excellence in Geometry and Physics at Uppsala University and VR 2020-04535, project grant. \\ 
\indent  PL records the preprint number: UUITP-33/24.
\\
\indent  
VS is supported by  Villum Fonden Villum Investigator grant 37814, Novo Nordisk Foundation grant NNF20OC0066298, and Danish National Research Foundation grant DNRF157. 
}

\begin{abstract}
We count  holomorphic curves in complex 3-space
with boundaries on three special Lagrangian solid tori. In view of \cite{SoB}, the count is valued in the HOMFLYPT skein module of the union of the tori. Using 1-parameter families of curves at infinity, we derive three skein valued operator equations which must annihilate the count, and which dequantize to a mirror of the geometry.  We show algebraically
that the resulting equations determine the count uniquely, and that the result agrees with the topological vertex from topological string theory \cite{AKMV}.
\end{abstract}

\maketitle

\thispagestyle{empty}

\setcounter{tocdepth}{2}
\settocsectionvskip{6pt}
\tableofcontents

\newpage

\section{Introduction}\label{sec:intro}


The topological vertex is a combinatorial structure discovered in string theory \cite{AKMV}.  It takes as input three partitions $(\lambda_1,\lambda_2,\lambda_3)$ and returns a Laurent series in 
$q^{1/2}$ with integer coefficients.  One can take for its definition the skew Schur function formula \cite[Eq.~3.15]{Okounkov-Reshetikhin-Vafa}: 
\begin{equation}\label{vertex skew schur formula}
\mathcal{C}_{\lambda_1,\lambda_2,\lambda_3}(q) := q^{\kappa(\lambda_2)/2 + \kappa(\lambda_3)/2} \, s_{\lambda_2^t}(q^\rho) \, \sum_\eta s_{\lambda_3^t/\eta}(q^{\lambda_2 + \rho}) s_{\lambda_1/\eta}(q^{\lambda_2^t + \rho}),
\end{equation}
where $\rho = (-\frac12, -\frac32, \ldots)$ 
and $\kappa(\lambda)/2 = \sum_{(i, j) \in \lambda} (j-i)$.  The vertex has an elementary combinatorial interpretation, as an appropriately normalized count of box configurations with asymptotics $(\lambda_1,\lambda_2,\lambda_3)$ \cite{Okounkov-Reshetikhin-Vafa}, which in particular implies that it is invariant under cyclically permuting the partitions.  The formula \eqref{vertex skew schur formula} implies the symmetry 
$\mathcal{C}_{\lambda_1,\lambda_2,\lambda_3}= q^{\sum \kappa(\lambda_i)/2} \mathcal{C}_{\lambda_3^t, \lambda_2^t, \lambda_1^t}$.  
 The first few values are: 
$$\mathcal{C}_{\emptyset, \emptyset, \emptyset} = 1, \qquad \qquad \mathcal{C}_{\square, \emptyset, \emptyset} = (q^{1/2} - q^{-1/2})^{-1}, \qquad 
\qquad \mathcal{C}_{\square, \emptyset, \square}
= (q^{1/2} - q^{-1/2})^{-2} + 1.$$

The vertex was introduced to encode the partition function of topological open strings ending on a certain configuration of three special Lagrangian solid tori  in $\C^3$. It also provides a building block for similar calculations in general toric 3-folds \cite{AKMV}.  
In the mathematical literature, the topological vertex has been shown to compute appropriate relative versions of the Donaldson-Thomas \cite{MNOP} and  (closed) Gromov-Witten \cite{MOOP,MelissaandCo} invariants of $\mathbb{C}^3$.  

In this article, we will give a mathematical treatment in the original geometric framework: counting holomorphic curves in $\C^3$ with toric Lagrangian boundary conditions.  Such counts are well defined  after \cite{SoB, ghost, bare}. We will show geometrically, by counting holomorphic curves `at infinity' similarly to \cite{Ekholm:2020csl,ekholm2021coloredhomflypt,Scharitzer-Shende, Ekholm:2024ceb}, that the coefficients $\mathcal{T}_{\lambda_1, \lambda_2, \lambda_3}$ of the resulting partition function must satisfy the following system of equations 
(the first and second set of three are each related by cyclic permutation of all indices $1 \to 2 \to 3$):
\begin{align}
\label{eq:recursion-tem-by-term-split R32}\tag{$\mathrm{R}^3_2$}
	s_{\ydiagram{1}} (q^{\lambda_1 + \rho})  \,
	\mathcal{T}_{\lambda_1,\lambda_2,\lambda_3}
  =  
	\sum_{\beta_2 \in \lambda_2 - \ydiagram{1}}
	q^{(\kappa(\beta_2)-\kappa(\lambda_2))/2}\, \mathcal{T}_{\lambda_1,\beta_2,\lambda_3} - 
    	\sum_{\gamma_3 \in \lambda_3 + \ydiagram{1}}
  \mathcal{T}_{\lambda_1,\lambda_2,\gamma_3} ,
\\
\label{eq:recursion-tem-by-term-split R13}\tag{$\mathrm{R}^1_3$}
s_{\ydiagram{1}} (q^{\lambda_2 + \rho})
	\mathcal{T}_{\lambda_1,\lambda_2,\lambda_3} 
    =
	\sum_{\beta_3 \in \lambda_3 - \ydiagram{1}}
	q^{(\kappa(\beta_3)-\kappa(\lambda_3))/2}\, 
	\mathcal{T}_{\lambda_1,\lambda_2,\beta_3}
    - 	\sum_{\gamma_1 \in \lambda_1 + \ydiagram{1}}
	\mathcal{T}_{\gamma_1,\lambda_2,\lambda_3},
    \\
    \label{eq:recursion-tem-by-term-split R21}\tag{$\mathrm{R}^2_1$} 
    s_{\ydiagram{1}} (q^{\lambda_3 + \rho})
	\mathcal{T}_{\lambda_1,\lambda_2,\lambda_3}
    =  
	\sum_{\beta_1 \in \lambda_1 - \ydiagram{1}}
	q^{(\kappa(\beta_1)-\kappa(\lambda_1))/2}\, 
	\mathcal{T}_{\beta_1,\lambda_2,\lambda_3}
    - \sum_{\gamma_2 \in \lambda_2 + \ydiagram{1}}
	\mathcal{T}_{\lambda_1,\gamma_2,\lambda_3},
	\\
\label{eq:recursion-tem-by-term-split R31}\tag{$\mathrm{R}^3_1$}
s_{\ydiagram{1}} (q^{\lambda_2^t + \rho}) \,
\mathcal{T}_{\lambda_1,\lambda_2,\lambda_3}
  =
	\sum_{\alpha_1 \in \lambda_1 - \ydiagram{1}}
	\mathcal{T}_{\alpha_1,\lambda_2,\lambda_3}
    - \sum_{\gamma_3 \in \lambda_3 + \ydiagram{1}}
	q^{(\kappa(\gamma_3)-\kappa(\lambda_3))/2} \, 
	\mathcal{T}_{\lambda_1,\lambda_2,\gamma_3},	
        	\\
            \label{eq:recursion-tem-by-term-split R12}
\tag{$\mathrm{R}^1_2$}
s_{\ydiagram{1}} (q^{\lambda_3^t + \rho}) \,
\mathcal{T}_{\lambda_1,\lambda_2,\lambda_3}
	=  
	\sum_{\alpha_2 \in \lambda_2 - \ydiagram{1}}
	\mathcal{T}_{\lambda_1,\alpha_2,\lambda_3}
    -
	\sum_{\gamma_1 \in \lambda_1 + \ydiagram{1}}
	q^{(\kappa(\gamma_1)-\kappa(\lambda_1))/2} \, 
	\mathcal{T}_{\gamma_1,\lambda_2,\lambda_3},
\\
    \label{eq:recursion-tem-by-term-split R23}
    \tag{$\mathrm{R}^2_3$}
s_{\ydiagram{1}} (q^{\lambda_1^t + \rho}) \,
\mathcal{T}_{\lambda_1,\lambda_2,\lambda_3}
=
	\sum_{\alpha_3 \in \lambda_3 - \ydiagram{1}}
	\mathcal{T}_{\lambda_1,\lambda_2,\alpha_3}
    -
	\sum_{\gamma_2 \in \lambda_2 + \ydiagram{1}}
	q^{(\kappa(\gamma_2)-\kappa(\lambda_2))/2} \, 
	\mathcal{T}_{\lambda_1,\gamma_2,\lambda_3}.
\end{align}
By a manipulation of symmetric functions, we prove:

\begin{theorem} \label{vertex characterization}
The system of equations $(\mathrm{R}_i^j)$
with initial condition
$\mathcal{T}_{\emptyset, \emptyset, \emptyset}=1$ 
has unique solution 
$\mathcal{T}_{\lambda_1, \lambda_2, \lambda_3} \ = \ 
(-1)^{|\lambda_1| + |\lambda_2| + |\lambda_3|}\,\mathcal{C}_{\lambda_1^t, \lambda_2^t, \lambda_3^t}$.
\end{theorem}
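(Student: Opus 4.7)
The plan is to prove the theorem in two parts: \emph{existence}, verifying the given formula solves the system, and \emph{uniqueness}, showing any solution with the stated initial value agrees with it.

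For existence, I substitute $\mathcal{T}^\star_{\lambda_1,\lambda_2,\lambda_3} := (-1)^{|\lambda_1|+|\lambda_2|+|\lambda_3|}\mathcal{C}_{\lambda_1^t,\lambda_2^t,\lambda_3^t}$ into each of the six equations. The cyclic permutation symmetry of $\mathcal{C}$ together with the transpose identity $\mathcal{C}_{\lambda_1,\lambda_2,\lambda_3}= q^{\sum \kappa(\lambda_i)/2}\mathcal{C}_{\lambda_3^t,\lambda_2^t,\lambda_1^t}$ noted in the excerpt induce symmetries of $\mathcal{T}^\star$: one cycles the three unprimed (resp.\ primed) recursions via $1\to 2\to 3$, and the other exchanges the unprimed and primed groups. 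Thus only a single representative, say $(\mathrm{R}^2_1)$, needs to be verified. After the change of variables $\mu_i := \lambda_i^t$, this reduces to the Schur-function identity
\[
\sum_{\nu_2 \in \mu_2+\square}\mathcal{C}_{\mu_1,\nu_2,\mu_3} \;=\; s_\square(q^{\mu_3^t+\rho})\,\mathcal{C}_{\mu_1,\mu_2,\mu_3} \;+\; \sum_{\nu_1 \in \mu_1-\square}q^{(\kappa(\mu_1)-\kappa(\nu_1))/2}\,\mathcal{C}_{\nu_1,\mu_2,\mu_3},
\]
which I verify by substituting the skew-Schur formula \eqref{vertex skew schur formula} and applying the Pieri rule $s_\square\cdot s_\lambda = \sum_{\nu\in\lambda+\square}s_\nu$, its skew/dual companion $p_1^\perp s_\lambda = \sum_{\beta\in\lambda-\square}s_\beta$, and the behaviour of adding or removing a box on the principal specializations $q^{\lambda+\rho}$ and $q^{\lambda^t+\rho}$; the $q^{\kappa/2}$ weights arise as the content of the affected box.

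For uniqueness, let $\delta := \mathcal{T} - \mathcal{T}^\star$ be the difference of two solutions. By linearity $\delta$ satisfies the same system with $\delta_{\emptyset,\emptyset,\emptyset}=0$, and I induct on $d := |\lambda_1|+|\lambda_2|+|\lambda_3|$. Under the inductive hypothesis that $\delta$ vanishes at triples of size $\leq d$, each of the six equations evaluated at a size-$d$ triple becomes a linear constraint on $\delta$-values at size-$(d+1)$ triples obtained by adding a box. The three unprimed recursions and their primed counterparts supply, for each choice of which index to increment, both the unweighted and the $q^{\kappa/2}$-weighted sums over addable corners; since distinct corners carry distinct contents, and we may vary the starting triple of size $d$, these constraints suffice to pin down the size-$(d+1)$ layer. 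As a sanity check at $d=1$: $(\mathrm{R}^1_3)$ applied at $(\square,\emptyset,\emptyset)$ gives $\delta_{(2),\emptyset,\emptyset}+\delta_{(1,1),\emptyset,\emptyset}=0$, and $(\mathrm{R}^1_2)$ at the same triple gives $q\,\delta_{(2),\emptyset,\emptyset}+q^{-1}\delta_{(1,1),\emptyset,\emptyset}=0$, whose Vandermonde determinant $q^{-1}-q$ is nonzero, forcing both to vanish.

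The main obstacle is establishing this full-rank claim for general $d$, where a partition can have arbitrarily many addable corners and two equations per triple are not locally sufficient. The cleanest resolution is to encode $\delta$ in a generating wavefunction $\hat\Delta(x_1,x_2,x_3) := \sum \delta_{\lambda_1,\lambda_2,\lambda_3}\,s_{\lambda_1^t}(x_1)\,s_{\lambda_2^t}(x_2)\,s_{\lambda_3^t}(x_3)$, under which each recursion becomes an operator equation built from multiplication by $p_1 = s_\square$, the derivation $p_1^\perp$, the Schur-limit Macdonald operator (with $s_\lambda$-eigenvalue $s_\square(q^{\lambda+\rho})$), and conjugations by $q^{\kappa/2}$. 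Showing that the common kernel of these six operators in the appropriate completion consists only of constants then delivers uniqueness.
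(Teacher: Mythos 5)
The decisive gap is in your uniqueness argument, and you flag it yourself: the ``full-rank claim'' is never established, and the proposed fix --- repackaging $\delta$ into a wavefunction and asserting that the common kernel of six operators consists of constants --- is a restatement of uniqueness, not a proof of it. Inducting on the total size $d=|\lambda_1|+|\lambda_2|+|\lambda_3|$ genuinely does not close locally: for a fixed size-$d$ triple and a fixed index to increment you get exactly two constraints (the unweighted and the $q^{\kappa/2}$-weighted sums over addable corners), while the number of addable corners is unbounded, and constraints coming from different size-$d$ triples involve different sets of unknowns. The paper avoids this entirely by \emph{not} inducting on total size: it orders partitions by number of parts and by the value of the smallest part, first determining $\mathcal{T}_{\emptyset,\emptyset,\lambda_3}$, then $\mathcal{T}_{\lambda_1,\emptyset,\lambda_3}$, then the general case, arranging at each step that exactly two coefficients are new --- those indexed by $(\lambda_3,n+1)$ and $(\lambda_3,n,1)$ --- so that the pair $(\mathrm{R}^3_1)$, $(\mathrm{R}^3_2)$ yields a $2\times 2$ system with determinant $q^{-1}-q^{n}\neq 0$. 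Some such traversal (or an actual rank computation) is indispensable; without it the theorem is not proved.

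The existence half is closer to the paper in spirit but also under-justified. Your chosen representative $(\mathrm{R}^2_1)$ transposes into an identity in which the summation variable $\nu_2\in\mu_2+\ydiagram{1}$ is the \emph{middle} partition of $\mathcal{C}$: it sits inside the evaluation points $q^{\nu_2+\rho}$ and $q^{\nu_2^t+\rho}$ of the skew Schur functions and in the prefactor $q^{\kappa(\nu_2)/2}s_{\nu_2^t}(q^\rho)$, and no Pieri rule controls $\sum_{\nu_2\in\mu_2+\ydiagram{1}}s_{\mu_1/\eta}(q^{\nu_2^t+\rho})$. The paper instead verifies an equation from the other cyclic orbit, $(\mathrm{R}^3_1)$, where the varied partitions are the outer arguments of the skew Schur functions and the middle one is fixed; even then, the skew Pieri rule produces an unwanted term $\sum_{\tau\in\eta-\ydiagram{1}}s_{\mu/\tau}$ indexed by the \emph{inner} summation variable $\eta$, and converting it into a sum over $\lambda-\ydiagram{1}$ requires a separate associativity identity for Littlewood--Richardson coefficients (Lemma \ref{see saw sum}). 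Neither point is covered by your toolbox of ``Pieri, dual Pieri, and box contents,'' so the verification does not go through as stated, though it can be repaired by choosing the representative the paper chooses and supplying the see-saw lemma.
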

This then constitutes the first mathematically rigorous proof that the topological vertex  indeed counts holomorphic curves with boundary on toric Lagrangians $L_1, L_2, L_3$ in $\C^3$.  

Let us review the geometric setup. 
We work in the formalism of skein-valued curve counting, which is a mathematically rigorous framework for counting holomorphic curves of all genera with Lagrangian boundary conditions $L$ in a Calabi-Yau 3-fold $X$ \cite{SoB, ghost, bare}.  Recall that the skein module of $L$ is generated by framed curves, subject to the relations of Figure \ref{HOMFLYPT skein}.  
The partition function $\sZ_{X, L}$ is an element of a certain completion of the HOMFLYPT skein module of $L$, and the count is defined by perturbing the holomorphic curve equation so that all relevant curves are embedded, and then counting each curve by the class determined by its boundaries in the skein, times a monomial factor in $a$ and $z$ recording topological information.  This works to define an invariant 
because the boundaries of moduli in 1-parameter families -- which ordinarily would obstruct invariance of curve counts -- are precisely canceled in the HOMFLYPT skein relations. 

\begin{figure}
    \includegraphics[scale=0.25]{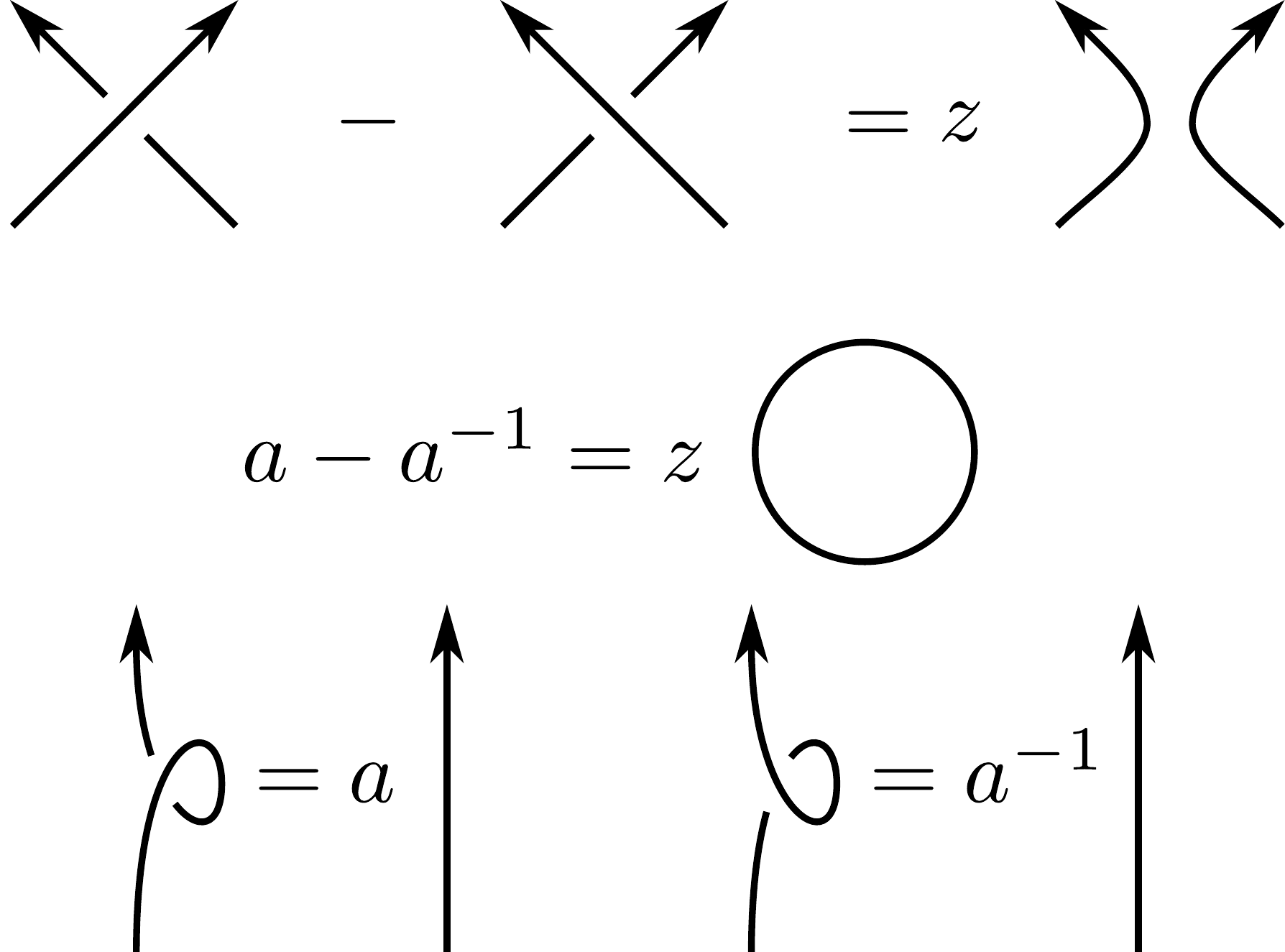}
    \caption{HOMFLYPT skein relations in the variable $z = q^{1/2} - q^{-1/2}$.} 
    \label{HOMFLYPT skein}
\end{figure}

Suppose $X$ is noncompact with convex contact boundary $\partial X$ and $L$ is asymptotic to a Legendrian $\partial L\subset\partial X$.  By a  `skein valued mirror', we mean some operators $\sA_{\partial X, \partial L} \subset \Sk( \R \times \partial L)$, which dequantize to give generators of the defining ideal for a moduli of objects in the (non-exact) partially wrapped Fukaya category of $(X, L)$, and which moreover satisfy 
\begin{equation} 
\label{what is SVM} \sA_{\partial X, \partial L} \cdot \sZ_{X, L} = 0.
\end{equation}
Here, $\Sk( \R \times \partial L)$ is an algebra by concatenation of links in the $\R$-direction, which, for the same reason, acts on $\Sk(L)$.  
This is consistent 
with the idea from topological strings that open holomorphic curve counts are `wave functions' for (i.e., are annihilated by) a quantization of the mirror family \cite{Aganagic:2000gs, ADKMV, AV2, Aganagic:2013jpa}.   In good cases, a skein-valued mirror can be constructed mathematically by studying certain boundary degenerations for 1-parameter families of curves `at infinity'
\cite{Ekholm:2020csl,ekholm2021coloredhomflypt,Scharitzer-Shende, Ekholm:2024ceb}; we will apply the same principles here. 

Let us recall facts about the relevant skeins.  The toric Lagrangians are topologically solid tori, $S^1 \times \R^2$ with ideal boundary a torus $T^2$, so our desired $\sA$ will be written in terms of elements of $\Sk(\R \times T^2)$.  We will write $\sP_{i,j} \in \Sk(\R \times T^2)$ for the class in the skein of the embedded curve of homology class $(i,j)$ in $0 \times T^2  \subset \R \times T^2$ framed by the vector field along the $\R$-factor.  In fact, these elements form a basis; the algebra structure is studied in detail in \cite{Morton-Samuelson}, where it is shown to be isomorphic with the $q=t$ specialization of the elliptic Hall algebra. 

We will be interested in the following elements of the tensor product of three skein modules of tori (the skein module of each torus gets its own $a$-variable, but they all have the same $q$-variable): 
\begin{align*}
\sA_1 \ =& \ 
a_1^{-1} \left(\sP_{0,0}^{(1)}  - \sP_{1,0}^{(1)} -  a_{1} a_3^2  \,\sP_{0,1}^{(1)}\right)
	+  a_2 a_3^2 
 \left(\sP_{0,0}^{(2)} + a_3^{-2}  \,\sP_{-1,1}^{(2)} - \sP_{-1,0}^{(2)}    \right)\\
	 & \ 
	+ \left(a_3  \,\sP_{0,0}^{(3)}  - \sP_{0,-1}^{(3)} + a_3 \sP_{1,-1}^{(3)}    \right)\\
\sA_2 \ =& \
a_2^{-1} \left(\sP_{0,0}^{(2)}  - \sP_{1,0}^{(2)} -  a_{2} a_1^2  \,\sP_{0,1}^{(2)}\right) +
	  a_3 a_1^2 
	\left(\sP_{0,0}^{(3)}   +  a_1^{-2}  \,\sP_{-1,1}^{(3)} - \sP_{-1,0}^{(3)}\right)\\
	& \ + 
	\left(a_1  \,\sP_{0,0}^{(1)} -\sP_{0,-1}^{(1)}  + a_1 \sP_{1,-1}^{(1)}   \right)\\
\sA_3  \ =& \
a_3^{-1} \left(\sP_{0,0}^{(3)}  - \sP_{1,0}^{(3)} -  a_{3} a_2^2  \,\sP_{0,1}^{(3)}\right) +
	  a_1 a_2^2 
\left(\sP_{0,0}^{(1)} +   a_2^{-2} \,\sP_{-1,1}^{(1)}
- \sP_{-1,0}^{(1)}  \right)\\
	 & \ +
	\left(  a_2  \,\sP_{0,0}^{(2)} - \sP_{0,-1}^{(2)}  + a_2 \sP_{1,-1}^{(2)} \right),
\end{align*}
Here, the superscript $(k)$ in $\sP_{i,j}^{(k)}$ indicates in which of the three tori the element lives. 

We will consider the action of the $\sA_i$ on the skein of the triple of solid tori in which the  $(1,0)$ classes become contractible.  To write formulas in coordinates, recall that  $\sP_{1,0}$ acts on $\Sk(S^1 \times \R^2)$
by adding at the `outside' the loop on the ideal boundary $T^2$
which is contractible in the solid torus.  In fact, this operator is diagonal in a basis $\{\sW_{\lambda,\bar\mu}\}$, indexed by pairs of partitions $(\lambda,\bar \mu)$, with distinct eigenvalues \cite{Hadji-Morton}.  This diagonalization determines $\sW_{\lambda,\bar\mu}$ up to scalar multiple. 
After choosing a longitude for the solid torus, we may define $\sW_{\lambda,\bar \mu}(\bigcirc)$ as 
the HOMFLYPT-polynomial for the $\sW_{\lambda,\bar\mu}$ satellite of the unknot in $S^3$. Then the aforementioned scalars are fixed by demanding that $\sW_{\lambda,\bar \mu}(\bigcirc)$ equals the quantum dimension of $(\lambda, \bar \mu)$ (expressed in terms of $(q^{1/2}, a)=(q^{1/2},q^{N/2})$).

A straightforward computation using the formulas of \cite{Morton-Samuelson} gives: 
\begin{proposition} \label{A to R intro}
    Let $\sZ = \sum \;\mathcal{T}_{\lambda_1, \lambda_2, \lambda_3} \sW_{\lambda_1,\emptyset} \otimes \sW_{\lambda_2,\emptyset} \otimes 
    \sW_{\lambda_3,\emptyset}$.  Then
    $\sA_1\cdot \sZ = 0$ if and only if $\mathcal{T}_{\lambda_1, \lambda_2, \lambda_3}$ satisfies $(\mathrm{R}^3_1)$ and $(\mathrm{R}^3_2)$, and similarly
    for cyclic permutations of indices $1 \to 2 \to 3$. 
\end{proposition}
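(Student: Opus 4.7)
The plan is to compute $\sA_1\cdot \sZ$ explicitly in coordinates. I would expand in the triple-tensor basis $\{\sW_{\nu_1,\emptyset}\otimes\sW_{\nu_2,\emptyset}\otimes\sW_{\nu_3,\emptyset}\}$, set the coefficient of each basis vector to zero as the equation $\sA_1\cdot \sZ = 0$, and separate the resulting relation by monomial in $(a_1,a_2,a_3)$. Since $\sA_1$ is $a$-inhomogeneous, I expect two independent $a$-monomials to survive, giving two recursions which I then identify with $(\mathrm{R}^3_1)$ and $(\mathrm{R}^3_2)$. The corresponding statements for $\sA_2$ and $\sA_3$ follow by cyclic permutation $1\to 2\to 3$, so only $\sA_1$ need be handled in detail.

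The key ingredients are the explicit formulas from \cite{Morton-Samuelson} for the action of $\sP_{m,n}$ on $\sW_{\lambda,\emptyset}$ in the quantum-dimension normalization. Concretely, $\sP_{0,0}$ acts by an $a$-dependent scalar; $\sP_{1,0}$ and $\sP_{-1,0}$ act diagonally, their eigenvalues splitting into a scalar part (which is absorbed by $\sP_{0,0}$) and a nontrivial part equal to $a\, s_{\ydiagram{1}}(q^{\lambda+\rho})$ and $a^{-1}\, s_{\ydiagram{1}}(q^{\lambda^t+\rho})$, respectively; $\sP_{0,\pm 1}$ add or remove a single box to $\lambda$ with Pieri coefficient $1$; and the twisted elements $\sP_{\pm 1,\mp 1}$ act by the same box-moves but with an additional framing factor $q^{\pm(\kappa(\nu)-\kappa(\mu))/2}$, where $\mu$ is the starting and $\nu$ the resulting partition.

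Substituting these formulas into $\sA_1$ and reading off the coefficient of $\sW_{\lambda_1,\emptyset}\otimes\sW_{\lambda_2,\emptyset}\otimes\sW_{\lambda_3,\emptyset}$ yields a Laurent polynomial in $(a_1,a_2,a_3)$. The diagonal contributions from $\sP_{0,0}^{(k)}$, $\sP_{1,0}^{(1)}$, and $\sP_{-1,0}^{(2)}$ reassemble into $s_{\ydiagram{1}}(q^{\lambda_1+\rho})\mathcal{T}$ and $s_{\ydiagram{1}}(q^{\lambda_2^t+\rho})\mathcal{T}$ after the scalar pieces cancel within each of the first two groupings in the definition of $\sA_1$; the off-diagonal terms $\sP_{0,1}^{(1)}$, $\sP_{0,-1}^{(3)}$, $\sP_{-1,1}^{(2)}$, $\sP_{1,-1}^{(3)}$ produce the box-shifted sums on the right-hand sides. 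The $a$-prefactors in $\sA_1$ are arranged so that these pieces group into exactly two independent $a$-monomials, whose vanishing gives $(\mathrm{R}^3_1)$ and $(\mathrm{R}^3_2)$ respectively; the ``if and only if'' is then automatic from the freeness of the basis.

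The main obstacle is verifying the precise normalizations: that the eigenvalues of $\sP_{\pm 1,0}$ on $\sW_{\lambda,\emptyset}$ really do give $s_{\ydiagram{1}}(q^{\lambda+\rho})$ and $s_{\ydiagram{1}}(q^{\lambda^t+\rho})$ with the correct $a$-powers, that the framing factor for $\sP_{\pm 1,\mp 1}$ is exactly $q^{\pm(\kappa(\nu)-\kappa(\mu))/2}$, and that the scalar parts of the $\sP_{\pm 1,0}$-eigenvalues cancel cleanly against the $\sP_{0,0}^{(k)}$-pieces in $\sA_1$. These are all bookkeeping consequences of classical symmetric-function identities combined with the Morton--Samuelson dictionary, but must be traced through the conventions used in the present paper with care. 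Once fixed, the rest of the argument is mechanical.
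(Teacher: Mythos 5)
Your approach is the same as the paper's: expand $\sA_1\cdot\sZ$ in the basis $\sW_{\nu_1,\emptyset}\otimes\sW_{\nu_2,\emptyset}\otimes\sW_{\nu_3,\emptyset}$ using the Morton--Samuelson formulas, observe that the diagonal pieces assemble into $s_{\ydiagram{1}}(q^{\lambda_1+\rho})$ and $s_{\ydiagram{1}}(q^{\lambda_2^t+\rho})$ while the off-diagonal pieces give the box-shifted sums, and split the resulting single relation by $a_3$-degree into $(\mathrm{R}^3_1)$ and $(\mathrm{R}^3_2)$; cyclic symmetry handles $\sA_2,\sA_3$. That is exactly the computation carried out in the paper.

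One point in your summary of the operator actions is wrong as stated and matters for the ``if and only if.'' The operators $\sP_{0,-1}$ and $\sP_{1,-1}$ do \emph{not} act on $\sW_{\lambda,\emptyset}$ purely by removing a box: each produces, in addition to the sum over $\gamma\in\lambda-\ydiagram{1}$, a term proportional to $\sW_{\lambda,\overline{\ydiagram{1}}}$, which lies outside the span of the $\sW_{\nu,\emptyset}$. Concretely, $\sP_{0,-1}\sW_{\lambda,\emptyset}$ contains $+\sW_{\lambda,\overline{\ydiagram{1}}}$ and $\sP_{1,-1}\sW_{\lambda,\emptyset}$ contains $+a^{-1}\sW_{\lambda,\overline{\ydiagram{1}}}$, so it is only in the particular combination $\sP_{0,-1}^{(3)}-a_3\sP_{1,-1}^{(3)}$ occurring in $\sA_1$ that these terms cancel. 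If they did not, $\sA_1\cdot\sZ=0$ would impose a third family of equations coming from the coefficient of $\sW_{\lambda_1,\emptyset}\otimes\sW_{\lambda_2,\emptyset}\otimes\sW_{\lambda_3,\overline{\ydiagram{1}}}$, and the equivalence with just $(\mathrm{R}^3_1)$ and $(\mathrm{R}^3_2)$ would fail. This is more than a normalization to be checked: it is a structural cancellation your write-up should make explicit before declaring the rest mechanical.
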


Let $\sZ_{\C^3, L_1, L_2, L_3}$ be the skein-valued count of holomorphic curves in $\C^3$ ending on three toric Lagrangians, one along each `leg', see Section \ref{ssec : Lagrangians} and Figure \ref{fig:fillings}.
It is easy to show that the coefficients of $\sZ_{\C^3, L_1, L_2, L_3}$ in the basis 
$\sW_{\lambda_1,\bar \mu_1} \otimes \sW_{\lambda_2,\bar \mu_2} \otimes 
\sW_{\lambda_3,\bar \mu_3}$ vanish unless $\mu_1, \mu_2, \mu_3 = \emptyset$ and that the coefficient of $\sW_{\emptyset, \emptyset} \otimes \sW_{\emptyset, \emptyset} \otimes \sW_{\emptyset, \emptyset}$ is $1$.  
We count holomorphic curves to establish the main geometric result of the present article: 
\begin{theorem} \label{count at infinity intro} 
    $\sA_i\cdot \sZ_{\C^3, L_1, L_2, L_3} = 0$ for $i=1, 2, 3$.
\end{theorem}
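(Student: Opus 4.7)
The plan is to apply the ``curves at infinity'' method of \cite{Ekholm:2020csl, ekholm2021coloredhomflypt, Scharitzer-Shende, Ekholm:2024ceb}: for each $i \in \{1,2,3\}$, construct a 1-parameter family of holomorphic curves in the symplectization $\R \times \partial \C^3$ with boundary on the cylindrical Legendrian $\partial L_1 \sqcup \partial L_2 \sqcup \partial L_3$ whose SFT-compactification, read in the skein of the three boundary tori, recovers exactly the operator $\sA_i$. Gluing this family onto the count $\sZ_{\C^3, L_1, L_2, L_3}$ produces a 1-parameter family of holomorphic curves in $(\C^3, L_1 \cup L_2 \cup L_3)$; the oriented boundary of such a 1-manifold vanishes in the HOMFLYPT skein by the same wall-crossing principle that underpins the invariance of $\sZ$ (Figure \ref{HOMFLYPT skein}), and the gluing axiom for skein-valued counts identifies this boundary with $\sA_i \cdot \sZ_{\C^3, L_1, L_2, L_3}$, giving the theorem.

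The first concrete step is to enumerate the rigid holomorphic disks at infinity with boundary on one of the cylindrical Legendrian tori $\partial L_k \subset \partial \C^3$. The toric structure and the explicit placement of $L_1, L_2, L_3$ along the coordinate legs (Section \ref{ssec : Lagrangians}) should make this enumeration tractable and completely explicit. For each $i$, I expect exactly nine rigid disks, partitioned three-per-torus, whose boundary cycles are recorded by the elements $\sP^{(k)}_{i,j}$ appearing in $\sA_i$ and whose Maslov indices contribute the $a_j$-monomials displayed there; within each torus, the trio $\sP^{(k)}_{0,0}$ together with the two shifted classes should correspond to nearby disk families distinguished by how they attach near the relevant coordinate axis.

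The second step is, for each $i$, to produce the 1-parameter family whose SFT-compactification has exactly these nine rigid disks as boundary strata. Following \cite{Ekholm:2024ceb}, this is constructed by introducing an auxiliary marker sweeping a Legendrian circle on one of the $\partial L_k$, giving a moduli space of real dimension one whose endpoint degenerations are, by SFT compactness, the nine rigid disks attached to lower-level curves in $\C^3$. Under the gluing formula, each endpoint contributes the corresponding $a$-monomial times the class $\sP^{(k)}_{i,j}$ acting by concatenation on $\sZ_{\C^3,L_1,L_2,L_3}$, and the signed sum assembles into $\sA_i \cdot \sZ_{\C^3, L_1, L_2, L_3}$.

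The main obstacle will be the first step: a complete and rigorous classification of the rigid disks at infinity, including coherent orientations, signs, and $a$-weights, together with the exclusion of spurious contributions from multiple covers, ghost bubbles, or additional disk strata. The bare-curve skein framework of \cite{SoB, ghost, bare} controls these issues in principle, but concrete verification that the nine disks exhaust the boundary of the 1-parameter family and that the signs and weights reassemble into the precise displayed form of $\sA_i$ is the technical heart of the argument and must be carried out by hand in the three-Lagrangian toric setting, using the cyclic symmetry among the $L_k$ to reduce the work to a single representative $i$.
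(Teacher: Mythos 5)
Your overall framework (SFT degeneration at infinity, skein-valued gluing, wall-crossing cancellation of interior boundary strata) is the one the paper uses, and you correctly anticipate that the final operator consists of nine disk terms, three per torus. But there is a genuine gap at the center of the argument. The relevant $1$-dimensional moduli spaces are obtained by allowing one positive puncture at a Reeb chord of $\partial L_1\sqcup\partial L_2\sqcup\partial L_3$, and the natural choice is the self-chord $c_{k,k}$ of $\Lambda_k$ (the chords come in Bott tori of actions $\tfrac{2\pi}{9}$, $\tfrac{4\pi}{9}$, $\tfrac{2\pi}{3}$ and must first be Morsified). The SFT boundary of the family with positive puncture at $c_{k,k}$ does \emph{not} consist of nine disks: it consists of only \emph{three} disks (the ones with boundary on $\Lambda_k$) together with \emph{two triangles} carrying negative punctures at the shorter mixed chords $a_{k,k+1}$, $b_{k,k+2}$, etc. (Lemma \ref{l : Bott curves}). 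Because of those negative ends, the resulting relation annihilates the punctured partition function $\sZ^+$, not the compact-curve count $\sZ_{\C^3,L_1,L_2,L_3}$; your proposed single family per $i$ therefore does not close up into an operator acting on $\sZ$. The step you are missing is the paper's Lemma \ref{triangles opposite signs} and Proposition \ref{cancelling triangles}: one shows the two triangles in each family contribute with opposite signs and compatible $a$-monomials, and then takes a monomial-weighted linear combination of all three families $\sA_1^-,\sA_2^-,\sA_3^-$, with carefully matched Morse perturbations and capping paths, so that the six triangles cancel in pairs and only the nine negative-end-free disk terms survive. This elimination is the technical heart, not merely the enumeration of disks.

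A secondary inaccuracy: the $a$-monomials and signs in $\sA_i$ are not read off directly from Maslov indices or $4$-chain data alone. The paper pins them down indirectly, by computing the leading coefficients of $\sZ$ (the basic disk on each $L_k$ and the annulus between two of them, Lemma \ref{l : simple disks and annuli}), imposing that specific low-order coefficients of $\sA_1\cdot\sZ$ vanish, and using the a priori constraint that the coefficients are signed monomials. Your proposal should either carry out that algebraic bootstrap or supply an equivalent direct computation of framings, spin-structure signs, and $4$-chain intersections, which the paper deliberately avoids.
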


We deduce formally from
Theorem \ref{vertex characterization}, Proposition \ref{A to R intro}, and Theorem \ref{count at infinity intro}: 

\begin{corollary} \label{TV counts intro} The topological vertex counts holomorphic curves with Lagrangian boundary: 
    \begin{equation}\label{eq : vertex counts curves}
    \sZ_{\C^3, L_1, L_2, L_3} \ = \ \sum\, (-1)^{|\lambda_1| + |\lambda_2| + |\lambda_3|}\,\mathcal{C}_{\lambda_1^t, \lambda_2^t, \lambda_3^t} \, \sW_{\lambda_1,\emptyset} \otimes \sW_{\lambda_2,\emptyset} \otimes 
    \sW_{\lambda_3,\emptyset}.
    \end{equation}
\qed
\end{corollary}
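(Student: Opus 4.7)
The plan is simply to assemble the three preceding results. First, I would invoke the paragraph preceding Theorem \ref{count at infinity intro}: because the $\sW_{\lambda,\bar\mu}$-coefficients of $\sZ_{\C^3, L_1, L_2, L_3}$ vanish unless $\bar\mu_1=\bar\mu_2=\bar\mu_3=\emptyset$, there exist scalars $\mathcal{T}_{\lambda_1,\lambda_2,\lambda_3}$ with
\[
\sZ_{\C^3, L_1, L_2, L_3} = \sum \mathcal{T}_{\lambda_1,\lambda_2,\lambda_3}\; \sW_{\lambda_1,\emptyset} \otimes \sW_{\lambda_2,\emptyset} \otimes \sW_{\lambda_3,\emptyset},
\]
normalized by $\mathcal{T}_{\emptyset,\emptyset,\emptyset} = 1$.

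Next, I would apply Theorem \ref{count at infinity intro}, which says $\sA_i \cdot \sZ_{\C^3, L_1, L_2, L_3} = 0$ for $i=1,2,3$. By Proposition \ref{A to R intro}, the vanishing $\sA_1 \cdot \sZ = 0$ is equivalent to the coefficients $\mathcal{T}_{\lambda_1,\lambda_2,\lambda_3}$ satisfying the pair of recursions $(\mathrm{R}^3_1)$ and $(\mathrm{R}^3_2)$; the cyclic-permutation clauses in the same proposition applied to $\sA_2$ and $\sA_3$ then yield the remaining four recursions. Thus $\mathcal{T}$ satisfies the entire system $(\mathrm{R}^j_i)$.

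Finally, Theorem \ref{vertex characterization} says that this system, together with the initial condition $\mathcal{T}_{\emptyset,\emptyset,\emptyset}=1$, admits the unique solution $\mathcal{T}_{\lambda_1,\lambda_2,\lambda_3} = (-1)^{|\lambda_1|+|\lambda_2|+|\lambda_3|}\,\mathcal{C}_{\lambda_1^t,\lambda_2^t,\lambda_3^t}$. Substituting into the expansion of $\sZ_{\C^3, L_1, L_2, L_3}$ above produces exactly the formula \eqref{eq : vertex counts curves}. No additional obstacle appears in the corollary itself; all substantive content is already packaged in the three inputs, with the geometric curve-counting statement (Theorem \ref{count at infinity intro}) being the main nontrivial ingredient and the combinatorial uniqueness (Theorem \ref{vertex characterization}) being the second.
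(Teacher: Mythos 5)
Your proposal is correct and is exactly the paper's argument: the corollary is deduced formally by combining the expansion of $\sZ_{\C^3, L_1, L_2, L_3}$ in the $\sW_{\lambda,\emptyset}$ basis with normalization $\mathcal{T}_{\emptyset,\emptyset,\emptyset}=1$, Theorem \ref{count at infinity intro} and Proposition \ref{A to R intro} to obtain the recursions $(\mathrm{R}^j_i)$, and Theorem \ref{vertex characterization} for uniqueness of the solution. Nothing further is needed.
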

(The sign and transposes in the right hand side of \eqref{eq : vertex counts curves} could be eliminated by different choices of orientation and spin structures on the Lagrangians.)

\section{A recursion for the vertex (proof of Theorem \ref{vertex characterization})}
In this section we list straighforward lemmas on skew Schur functions and then use them to prove Theorem \ref{vertex characterization}.  

As always we regard Schur functions $s_\lambda$ and skew Schur functions $s_{\lambda/\nu}$ as functions of a large number of variables $\mathbf{x}=(x_1,\dots,x_N)$, where we take $N\to \infty$ in the following sense: an equation of Schur functions holds provided it holds for variables $\mathbf{x}$ with any finite number of components $N$.

\begin{lemma} \label{see saw sum} 
For any $\lambda, \mu$, we have:
    $$\sum_\eta \sum_{\tau \in \eta - \ydiagram{1}} s_{\lambda/\eta}(\mathbf{x}) s_{\mu / \tau}(\mathbf{y}) \ = \ \sum_\eta \sum_{\alpha \in \lambda - \ydiagram{1}} s_{\alpha/\eta}(\mathbf{x}) s_{\mu/\eta}(\mathbf{y}).$$
\end{lemma}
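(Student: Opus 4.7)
The plan is to reduce Lemma~\ref{see saw sum} to a simpler ``add-a-box/remove-a-box'' see-saw identity for skew Schur functions, which itself follows from comparing two expansions of $s_{\lambda/\eta}$ in a set of variables enlarged by one.

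The key auxiliary identity I would prove first is: for any partitions $\lambda,\tau$,
\begin{equation}\label{eq:plan-auxiliary}
\sum_{\eta\in\tau+\ydiagram{1}} s_{\lambda/\eta}(\mathbf{x}) \;=\; \sum_{\alpha\in\lambda-\ydiagram{1}} s_{\alpha/\tau}(\mathbf{x}).
\end{equation}
To prove \eqref{eq:plan-auxiliary}, I would introduce one extra variable $z$ and expand $s_{\lambda/\tau}(\mathbf{x},z)$ in two ways using the standard coproduct (branching) formula
$s_{\lambda/\tau}(\mathbf{x},z)=\sum_\beta s_{\lambda/\beta}(\mathbf{x})\,s_{\beta/\tau}(z)=\sum_\beta s_{\lambda/\beta}(z)\,s_{\beta/\tau}(\mathbf{x})$,
together with the single-variable specialization $s_{\mu/\nu}(z)=z^{|\mu/\nu|}$ when $\mu/\nu$ is a horizontal strip and $0$ otherwise. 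Equating the coefficients of $z^1$ in the two expansions isolates precisely the skew factors in which exactly one box is added (respectively removed), yielding \eqref{eq:plan-auxiliary}.

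With \eqref{eq:plan-auxiliary} in hand, the lemma is a swap of summation orders. I would first rewrite the LHS of the stated identity by summing over $\tau$ on the outside and $\eta\in\tau+\ydiagram{1}$ on the inside, turning the inner sum into $\sum_{\eta\in\tau+\ydiagram{1}} s_{\lambda/\eta}(\mathbf{x})$. Applying \eqref{eq:plan-auxiliary} replaces this by $\sum_{\alpha\in\lambda-\ydiagram{1}} s_{\alpha/\tau}(\mathbf{x})$. Then I would interchange the order of summation again, making $\alpha\in\lambda-\ydiagram{1}$ the outer variable and renaming $\tau\mapsto\eta$ as the summation index for the skew factors, producing the RHS $\sum_\eta\sum_{\alpha\in\lambda-\ydiagram{1}} s_{\alpha/\eta}(\mathbf{x}) s_{\mu/\eta}(\mathbf{y})$.

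There is no real obstacle: both steps are formal once \eqref{eq:plan-auxiliary} is established, and \eqref{eq:plan-auxiliary} itself is a one-line consequence of the coproduct expansion. The only point requiring a little care is the bookkeeping of which skew shape the added/removed box belongs to when swapping the roles of $\mathbf{x}$ and $z$ in the coproduct formula; once this is fixed, the two applications of the interchange of summation match up exactly as required.
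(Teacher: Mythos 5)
Your argument is correct, but it reaches the identity by a genuinely different route than the paper. The paper expands both sides in Schur functions, extracts the coefficient of $s_\delta(\mathbf{x})\,s_\epsilon(\mathbf{y})$, and reduces Lemma~\ref{see saw sum} to the Littlewood--Richardson identity $\sum_{\eta,\tau} c^\lambda_{\eta\delta}\, c^{\eta}_{\tau\,\ydiagram{1}}\, c^{\mu}_{\tau\epsilon} = \sum_{\eta,\alpha} c^{\lambda}_{\alpha\,\ydiagram{1}}\, c^\alpha_{\eta\delta}\, c^{\mu}_{\eta\epsilon}$, which it settles by the associativity relation $\sum_\alpha c^{\lambda}_{\alpha\,\ydiagram{1}} c^\alpha_{\eta\delta} = \sum_\alpha c^{\lambda}_{\alpha\delta} c^\alpha_{\eta\,\ydiagram{1}}$ and a relabelling of summation variables. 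You instead isolate the one-box see-saw identity $\sum_{\eta\in\tau+\ydiagram{1}} s_{\lambda/\eta}(\mathbf{x}) = \sum_{\alpha\in\lambda-\ydiagram{1}} s_{\alpha/\tau}(\mathbf{x})$, prove it by comparing the coefficient of $z^1$ in the two branching-rule expansions of $s_{\lambda/\tau}(\mathbf{x},z)$ (valid since a single-variable skew Schur function $s_{\beta/\tau}(z)$ vanishes unless $\beta/\tau$ is a horizontal strip, so degree one picks out exactly the one-box cases on either side), and then deduce the lemma by two interchanges of summation, using that $\tau\in\eta-\ydiagram{1}$ is equivalent to $\eta\in\tau+\ydiagram{1}$. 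Both proofs ultimately rest on the same algebraic fact, namely (co)associativity of multiplication in the ring of symmetric functions, but yours packages it as a self-contained, reusable generating-function identity and avoids all Littlewood--Richardson bookkeeping, at the cost of an extra reindexing step; the paper's version treats the $\mathbf{x}$ and $\mathbf{y}$ factors symmetrically at the level of structure constants. Adjoining the single extra variable $z$ is compatible with the paper's convention of verifying identities for every finite number of variables, so there is no gap there.
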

\begin{proof}
    Expanding in Schur functions, using Littlewood-Richardson coefficients to enforce $\tau \in \eta - \ydiagram{1}$ etc., and extracting the coefficient of $s_\delta(\mathbf{x}) s_{\epsilon}(\mathbf{y})$ shows the desired identity is equivalent to: 
    $$ 
    \sum_{\eta} \sum_{\tau} c^\lambda_{\eta \delta} c^{\eta}_{\tau \ydiagram{1}} c^{\mu}_{\tau \epsilon} \ = \ \sum_\eta \sum_{\alpha} c^{\lambda}_{\alpha \ydiagram{1}} c^\alpha_{\eta \delta} c^{\mu}_{\eta \epsilon}.
    $$
    To see that this identity holds, use the associativity relation
    $c^{\lambda}_{\alpha \ydiagram{1}} c^\alpha_{\eta \delta} = c^{\lambda}_{\alpha \delta} c^\alpha_{\eta \ydiagram{1}}$ on the right hand side and then relabel the summation variables $(\eta, \alpha) \mapsto (\tau, \eta)$.
\end{proof}

\begin{lemma} 
    For any $\lambda, \mu$, we have: 
    \begin{equation}
    \label{skew schur identity}
    \sum_{\eta} s_{\lambda/\eta}(\mathbf{x}) s_{\ydiagram{1}}(\mathbf{y}) s_{\mu/\eta}   (\mathbf{y})
    \ = \ 
    \sum_{\beta \in \mu+ \ydiagram{1}} \sum_\eta s_{\lambda/\eta}(\mathbf{x}) s_{\beta/\eta}(\mathbf{y}) -  \sum_{\alpha \in \lambda - \ydiagram{1}} \sum_{\eta} s_{\alpha/\eta}(\mathbf{x}) s_{\mu/\eta}(\mathbf{y}).\end{equation}
\end{lemma}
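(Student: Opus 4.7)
The plan is to reduce \eqref{skew schur identity} to a one-box skew Pieri identity for each fixed cut shape $\eta$, and then to prove that local identity by Hopf-algebra adjointness. First, I apply Lemma \ref{see saw sum} to rewrite the second sum on the right-hand side of \eqref{skew schur identity} as
$$\sum_{\alpha \in \lambda - \ydiagram{1}} \sum_\eta s_{\alpha/\eta}(\mathbf{x}) s_{\mu/\eta}(\mathbf{y}) = \sum_\eta \sum_{\tau \in \eta - \ydiagram{1}} s_{\lambda/\eta}(\mathbf{x}) s_{\mu/\tau}(\mathbf{y}).$$
After moving this term to the left, both sides of \eqref{skew schur identity} take the form $\sum_\eta s_{\lambda/\eta}(\mathbf{x}) \cdot F_\eta(\mathbf{y})$ for explicit $F_\eta$, so it is enough to prove, for each partition $\eta$, the one-box skew Pieri identity
$$s_{\ydiagram{1}}(\mathbf{y})\, s_{\mu/\eta}(\mathbf{y}) + \sum_{\tau \in \eta - \ydiagram{1}} s_{\mu/\tau}(\mathbf{y}) = \sum_{\beta \in \mu + \ydiagram{1}} s_{\beta/\eta}(\mathbf{y}).$$

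To prove this local identity I expand both sides in the Schur basis $\{s_\rho(\mathbf{y})\}$. Using the Pieri rule on the first term of the left-hand side together with the standard expansion $s_{\sigma/\pi} = \sum_\rho c^{\sigma}_{\pi\rho}\, s_\rho$, the identity reduces to the Littlewood--Richardson coefficient statement
$$\sum_{\nu \in \rho - \ydiagram{1}} c^{\mu}_{\eta\nu} + \sum_{\tau \in \eta - \ydiagram{1}} c^{\mu}_{\tau\rho} = \sum_{\beta \in \mu + \ydiagram{1}} c^{\beta}_{\eta\rho}.$$
By the Pieri rule and adjointness in the Hall inner product the right-hand side equals $\langle s_{\ydiagram{1}}\, s_\mu,\, s_\eta s_\rho \rangle = \langle s_\mu,\, D_{s_{\ydiagram{1}}}(s_\eta s_\rho) \rangle$, where $D_f$ denotes the adjoint of multiplication by $f$. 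Because $s_{\ydiagram{1}} = p_1$ is primitive in the Hopf algebra of symmetric functions, $D_{s_{\ydiagram{1}}}$ is a derivation, so $D_{s_{\ydiagram{1}}}(s_\eta s_\rho) = (D_{s_{\ydiagram{1}}} s_\eta)\, s_\rho + s_\eta\, (D_{s_{\ydiagram{1}}} s_\rho)$, and the co-Pieri identity $D_{s_{\ydiagram{1}}} s_\sigma = \sum_{\pi \in \sigma - \ydiagram{1}} s_\pi$ then produces exactly the left-hand side.

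No step is really an obstacle; the one subtlety is noticing that \eqref{skew schur identity} decouples to a statement about each $\eta$ separately, after which Hopf adjointness makes everything routine because $s_{\ydiagram{1}}$ is primitive. A direct bijection on semistandard skew tableaux or a Jacobi--Trudi determinant manipulation would give alternate routes, but the Hopf-algebraic argument is both shortest and most conceptual.
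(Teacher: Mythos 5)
Your argument is correct and follows essentially the same route as the paper: both reduce the identity, via Lemma \ref{see saw sum}, to the one-box skew Pieri rule $s_{\ydiagram{1}}(\mathbf{y})\, s_{\mu/\eta}(\mathbf{y}) = \sum_{\beta \in \mu+\ydiagram{1}} s_{\beta/\eta}(\mathbf{y}) - \sum_{\tau \in \eta-\ydiagram{1}} s_{\mu/\tau}(\mathbf{y})$, which the paper simply cites from Assaf--McNamara. The only difference is that you additionally supply a self-contained proof of that ingredient via the derivation property of $s_{\ydiagram{1}}^\perp = p_1^\perp$, which is a correct and pleasant addition but not a structurally different argument.
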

\begin{proof}
    According to the skew Pieri rule \cite{Assaf-McNamara}:  
    $$
    s_{\ydiagram{1}}(\mathbf{y}) s_{\mu/\eta}   (\mathbf{y})
    \ = \ 
    \sum_{\beta \in \mu+ \ydiagram{1}} s_{\beta/\eta}(\mathbf{y}) -  \sum_{\tau \in \eta - \ydiagram{1}}  s_{\mu/\tau}(\mathbf{y}).
    $$
    Multiply by $s_{\lambda/\eta}(\mathbf{x})$, 
    sum over $\eta$, and apply Lemma \ref{see saw sum}. 
\end{proof}

\begin{corollary}\label{C formula}
    The topological vertex
    \begin{equation} \label{vertex formula} \mathcal{C}_{\lambda_1,\lambda_2,\lambda_3} \ = \ q^{\kappa(\lambda_2)/2 + \kappa(\lambda_3)/2} \, s_{\lambda_2^t}(q^\rho) \, \sum_\eta s_{\lambda_3^t/\eta}(q^{\lambda_2 + \rho}) s_{\lambda_1/\eta}(q^{\lambda_2^t + \rho})
\end{equation}
satisfies the equation
$$
s_{\ydiagram{1}}(q^{\lambda_2^t + \rho})\, 
\mathcal{C}_{\lambda_1,\lambda_2,\lambda_3} \ = \ \sum_{\beta \in \lambda_1 + \ydiagram{1}} \mathcal{C}_{\beta,\lambda_2,\lambda_3}
- \sum_{\alpha \in \lambda_3 - \ydiagram{1}} q^{(\kappa(\lambda_3) - \kappa(\alpha))/2} \mathcal{C}_{\lambda_1, \lambda_2, \alpha},
$$
see \eqref{vertex skew schur formula} for notation.
\end{corollary}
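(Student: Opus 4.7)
The plan is to specialize the skew Schur identity \eqref{skew schur identity} so that, after multiplication by the scalar $q^{\kappa(\lambda_2)/2 + \kappa(\lambda_3)/2}\, s_{\lambda_2^t}(q^\rho)$ appearing in \eqref{vertex formula}, each of its three terms produces one of the three terms in the desired equation. Concretely, I would take $\mathbf{x} = q^{\lambda_2 + \rho}$, $\mathbf{y} = q^{\lambda_2^t + \rho}$, $\lambda = \lambda_3^t$, and $\mu = \lambda_1$. With this choice the $\eta$-sum on the left-hand side of \eqref{skew schur identity} is exactly the one in \eqref{vertex formula}, and the scalar prefactor passes through all three sums unchanged since it depends only on $\lambda_2$ and $\lambda_3$; so restoring it turns the left-hand side into $s_{\ydiagram{1}}(q^{\lambda_2^t+\rho})\,\mathcal{C}_{\lambda_1,\lambda_2,\lambda_3}$.

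For the first sum on the right-hand side of \eqref{skew schur identity}, indexed by $\beta \in \lambda_1 + \ydiagram{1}$, each term is the $\eta$-sum of \eqref{vertex formula} with $\lambda_1$ replaced by $\beta$, so after restoring the prefactor it becomes $\mathcal{C}_{\beta,\lambda_2,\lambda_3}$, reproducing the first sum in the statement. For the second sum, indexed by $\alpha \in \lambda_3^t - \ydiagram{1}$, I would invoke the transposition bijection with $\lambda_3 - \ydiagram{1}$; writing $\alpha = (\alpha')^t$ for $\alpha' \in \lambda_3 - \ydiagram{1}$, the $\eta$-summand becomes the one from \eqref{vertex formula} with $\lambda_3$ replaced by $\alpha'$. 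Since the substitution leaves the prefactor carrying $q^{\kappa(\lambda_3)/2}$ rather than the $q^{\kappa(\alpha')/2}$ that belongs to $\mathcal{C}_{\lambda_1,\lambda_2,\alpha'}$, the ratio $q^{(\kappa(\lambda_3)-\kappa(\alpha'))/2}$ appears in front, matching the statement exactly; the overall minus sign is inherited directly from \eqref{skew schur identity}.

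I expect no substantive obstacle: the corollary is essentially a transcription of \eqref{skew schur identity} through the dictionary \eqref{vertex formula}. The only care required is tracking the $\kappa$ shift when the partition index of $\mathcal{C}$ changes by a box, together with the elementary observation that removing a box from $\lambda_3^t$ is the same as transposing $\lambda_3$ minus a box. Both are immediate.
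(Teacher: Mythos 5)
Your proposal is correct and is exactly the paper's proof: specialize \eqref{skew schur identity} at $\lambda=\lambda_3^t$, $\mu=\lambda_1$, $\mathbf{x}=q^{\lambda_2+\rho}$, $\mathbf{y}=q^{\lambda_2^t+\rho}$ and restore the $(\lambda_2,\lambda_3)$-dependent prefactor. The bookkeeping you spell out (the transposition bijection $\lambda_3^t-\ydiagram{1}\leftrightarrow\lambda_3-\ydiagram{1}$ and the resulting $q^{(\kappa(\lambda_3)-\kappa(\alpha))/2}$ factor) is exactly what the paper leaves implicit.
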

\begin{proof}
     Specialize \eqref{skew schur identity}
    at $\lambda = \lambda_3^t$, $\mu= \lambda_1$, $\mathbf{y} = q^{\lambda_2^t + \rho}$, $\mathbf{x} = q^{\lambda_2 + \rho}$. 
\end{proof}

\begin{corollary} \label{R31 cor}
    The expression $\mathcal{T}_{\lambda_1, \lambda_2, \lambda_3}  := \, (-1)^{|\lambda_1| + |\lambda_2| + |\lambda_3|} \mathcal{C}_{\lambda_1^t, \lambda_2^t, \lambda_3^t}$  satisfies $(\mathrm{R}^3_1)$ in \linebreak 
    Theorem \ref{vertex characterization}: 
    $$s_{\ydiagram{1}} (q^{\lambda_2^t + \rho}) \,
\mathcal{T}_{\lambda_1,\lambda_2,\lambda_3}
  \ = \
	\sum_{\alpha_1 \in \lambda_1 - \ydiagram{1}}
	\mathcal{T}_{\alpha_1,\lambda_2,\lambda_3}
    - \sum_{\gamma_3 \in \lambda_3 + \ydiagram{1}}
	q^{(\kappa(\gamma_3)-\kappa(\lambda_3))/2} \, 
	\mathcal{T}_{\lambda_1,\lambda_2,\gamma_3}.$$	
\end{corollary}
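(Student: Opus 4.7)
The plan is to deduce $(\mathrm{R}^3_1)$ from a ``dual'' variant of Corollary~\ref{C formula}, obtained by the same argument but with the roles of $\mathbf{x}$ and $\mathbf{y}$ swapped in the application of \eqref{skew schur identity}; I then specialize to transposed partitions and absorb signs into $\mathcal{T}$. Note that a direct substitution $\lambda_i\mapsto\lambda_i^t$ in Corollary~\ref{C formula} produces an identity with $s_{\ydiagram{1}}(q^{\lambda_2+\rho})$ on the left, which matches $(\mathrm{R}^1_3)$ rather than $(\mathrm{R}^3_1)$, so a variant is genuinely needed.

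To obtain the variant, observe that \eqref{skew schur identity} is symmetric under the relabeling $(\lambda,\mathbf{x})\leftrightarrow(\mu,\mathbf{y})$; in renamed form it reads
\[
\sum_\eta s_{\mu/\eta}(\mathbf{y})\,s_{\ydiagram{1}}(\mathbf{x})\,s_{\lambda/\eta}(\mathbf{x}) = \sum_{\gamma\in\lambda+\ydiagram{1}}\sum_\eta s_{\mu/\eta}(\mathbf{y})\,s_{\gamma/\eta}(\mathbf{x}) - \sum_{\beta\in\mu-\ydiagram{1}}\sum_\eta s_{\beta/\eta}(\mathbf{y})\,s_{\lambda/\eta}(\mathbf{x}).
\]
Specializing at $\lambda=\lambda_3^t$, $\mu=\lambda_1$, $\mathbf{x}=q^{\lambda_2+\rho}$, $\mathbf{y}=q^{\lambda_2^t+\rho}$, multiplying through by $q^{\kappa(\lambda_2)/2+\kappa(\lambda_3)/2}s_{\lambda_2^t}(q^\rho)$, and converting the sum over $\lambda_3^t+\ydiagram{1}$ to one over $\lambda_3+\ydiagram{1}$ via the transposition bijection (which, through \eqref{vertex formula}, introduces the factor $q^{(\kappa(\lambda_3)-\kappa(\gamma))/2}$ needed to form $\mathcal{C}_{\lambda_1,\lambda_2,\gamma}$), I obtain the ``dual'' identity
\begin{equation}\label{dual C formula}
s_{\ydiagram{1}}(q^{\lambda_2+\rho})\,\mathcal{C}_{\lambda_1,\lambda_2,\lambda_3} = \sum_{\gamma\in\lambda_3+\ydiagram{1}}q^{(\kappa(\lambda_3)-\kappa(\gamma))/2}\,\mathcal{C}_{\lambda_1,\lambda_2,\gamma} - \sum_{\alpha\in\lambda_1-\ydiagram{1}}\mathcal{C}_{\alpha,\lambda_2,\lambda_3}.
\end{equation}

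To finish, I apply \eqref{dual C formula} with each $\lambda_i$ replaced by $\lambda_i^t$. Using $\kappa(\mu^t)=-\kappa(\mu)$ together with the transposition bijections on addable and removable boxes, I rewrite all partition indices on the right in terms of the original $\lambda_i$'s and their transposes, so that the left-hand side becomes $s_{\ydiagram{1}}(q^{\lambda_2^t+\rho})\,\mathcal{C}_{\lambda_1^t,\lambda_2^t,\lambda_3^t}$. Multiplying through by $(-1)^{|\lambda_1|+|\lambda_2|+|\lambda_3|}$ then converts $\mathcal{C}_{\lambda_1^t,\lambda_2^t,\lambda_3^t}$ into $\mathcal{T}_{\lambda_1,\lambda_2,\lambda_3}$; on the right, the parity shifts $|\gamma|=|\lambda_3|+1$ and $|\alpha|=|\lambda_1|-1$ each produce a sign flip when converting $\mathcal{C}$-values into $\mathcal{T}$-values, and these two flips rearrange the two sums into precisely the form of $(\mathrm{R}^3_1)$. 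The only obstacle is careful bookkeeping of transposes, box additions and removals, and signs; no new skew-Schur identity beyond \eqref{skew schur identity} is required.
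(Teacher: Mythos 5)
Your argument is correct: the ``renamed form'' of \eqref{skew schur identity} is simply that identity applied with $(\lambda,\mathbf{x})$ and $(\mu,\mathbf{y})$ interchanged; the specialization $\lambda=\lambda_3^t$, $\mu=\lambda_1$, $\mathbf{x}=q^{\lambda_2+\rho}$, $\mathbf{y}=q^{\lambda_2^t+\rho}$, followed by multiplication by $q^{\kappa(\lambda_2)/2+\kappa(\lambda_3)/2}s_{\lambda_2^t}(q^\rho)$, does yield your dual identity; and the final transposition and sign bookkeeping (using $\kappa(\mu^t)=-\kappa(\mu)$ and the bijections on addable/removable boxes under transposition) checks out and lands exactly on $(\mathrm{R}^3_1)$. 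The paper takes a shorter but less self-contained route: it applies Corollary~\ref{C formula} with the arguments reversed, i.e.\ to $\mathcal{C}_{\lambda_3,\lambda_2,\lambda_1}$, multiplies by $q^{-\sum\kappa(\lambda_i)/2}(-1)^{\sum|\lambda_i|}$, and invokes the symmetry $q^{-\sum\kappa(\lambda_i)/2}\,\mathcal{C}_{\lambda_3,\lambda_2,\lambda_1}=\mathcal{C}_{\lambda_1^t,\lambda_2^t,\lambda_3^t}$ asserted in the introduction. That symmetry is itself a consequence of the $\mathbf{x}\leftrightarrow\mathbf{y}$ symmetry of the defining sum for $\mathcal{C}$ together with the principal specialization identity $s_{\lambda}(q^\rho)=q^{\kappa(\lambda)/2}s_{\lambda^t}(q^\rho)$, so the two arguments have essentially the same content. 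Yours has the minor advantage of needing nothing beyond \eqref{skew schur identity} itself (in particular, no separate input about principal specializations), at the cost of redoing by hand the transposition bookkeeping that the paper packages into the stated symmetry of $\mathcal{C}$; you also correctly observe that the naive substitution $\lambda_i\mapsto\lambda_i^t$ in Corollary~\ref{C formula} gives $(\mathrm{R}^1_3)$ rather than $(\mathrm{R}^3_1)$, which is why one of these two detours is genuinely required.
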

\begin{proof}
    Apply Corollary \ref{C formula} with the partitions relabelled as $C_{\lambda_3, \lambda_2, \lambda_1}$.  Now multiply both sides by $q^{- \sum \kappa(\lambda_i) /2} (-1)^{ \sum |\lambda_i|}$.  The result
    shows that 
    $q^{- \sum \kappa(\lambda_i) /2} (-1)^{ \sum |\lambda_i|} C_{\lambda_3, \lambda_2, \lambda_1}$ satisfies $(\mathrm{R}_1^3)$.  We rearrange
    $q^{- \sum \kappa(\lambda_i) /2}  C_{\lambda_3, \lambda_2, \lambda_1} = 
     C_{\lambda_1^t, \lambda_2^t, \lambda_3^t}$. 
\end{proof}

Since symmetries of $\mathcal{T}_{\lambda_1,\lambda_2,\lambda_3}$ act transitively on the set of the $(\mathrm{R}_i^j)$, it follows that 
$\mathcal{T}_{\lambda_1,\lambda_2,\lambda_3}$ is annihilated
by these equations as well. 

\begin{proof}[Proof of Theorem \ref{vertex characterization}]
It remains to show that the $(\mathrm{R}_i^j)$ equations have a unique
solution $\mathcal{T}_{\emptyset, \emptyset, \emptyset}$ with $\mathcal{T}_{\emptyset,\emptyset,\emptyset} = 1$. 
First, let us show
$\mathcal{T}_{\emptyset,\emptyset,\lambda_3}$
is uniquely determined for all $\lambda_3$. 
The argument is inductive. 
Assume that we have determined $\mathcal{T}_{\emptyset,\emptyset,\lambda_3}$ for all partitions with at most $m$ parts.
Now if $\lambda_3$ has $m$ parts, 
the equation $(\mathrm{R}_2^3)$ for
$T_{\emptyset, \emptyset, \lambda_3}$ 
has only a single term with more than $m$ parts, namely $\mathcal{T}_{\emptyset,\emptyset,(\lambda_3, 1)}$, where $(\lambda_3, 1)$ is the partition by adding an extra part $1$ to $\lambda_3$.  Thus, $\mathcal{T}_{\emptyset,\emptyset,(\lambda_3, 1)}$ is also determined.

Consider now partitions with $m+1$ parts with $n>1$ as the smallest part, or with $m+2$ parts and $n-1,1$ as the smallest two parts, i.e.,  partitions of the form $(\lambda,n)$ and $(\lambda, n-1,1)$. 

The equations $(\mathrm{R}^3_1)$ and $(\mathrm{R}^3_2)$ for 
$\mathcal{T}_{\emptyset, \emptyset, (\lambda_3, n)}$ give a two-by-two system
\begin{equation}\label{eq : 2 by 2}
\left\{
\begin{matrix}
	&\mathcal{T}_{\emptyset,\emptyset,(\lambda_3,n+1)}  &+
	&\mathcal{T}_{\emptyset,\emptyset,(\lambda_3,n,1)}   &=  
    &F_{n},
	\\
	&q^{n}\, \mathcal{T}_{\emptyset,\emptyset,(\lambda_3,n+1)} 
	&+ &q^{-1}\mathcal{T}_{\emptyset,\emptyset,(\lambda_3,n,1)}   &= 
    &G_{n}
\end{matrix}
\right.,
\end{equation}
where $F_{n}$ and $G_{n}$ depend only on the $\mathcal{T}_{0, 0, (\lambda_3',n'+1)}$ and $\mathcal{T}_{(\lambda_3',n',1)}$, where either $|\lambda_3'| < |\lambda_3|$ or $n' < n$. 
Since 
\[
\det\left(\begin{matrix}1 & 1 \\ q^{n} & q^{-1} \end{matrix}\right) \ = \ (q^{-1} - q^{n}) \ \ne \ 0, 
\] 
the system has a unique solution, and recursively determines these coefficients. 

We next consider coefficients $\mathcal{T}_{\lambda_1,\emptyset,\lambda_3}$, where we go through partitions $\lambda_1$ as above using instead $(\mathrm{R}^1_3)$ and $(\mathrm{R}^1_2)$ which leads to the same two-by-two system with a more involved right hand side. To finish showing existence and uniqueness, we consider $\mathcal{T}_{\lambda_1,\lambda_2,\lambda_3}$ and go through partitions $\lambda_2$ using the corresponding  system of $(\mathrm{R}^2_1)$ and $(\mathrm{R}^2_3)$.
\end{proof}

\section{Recursion from operator equation (proof of Proposition \ref{A to R intro})}
In this section we prove Proposition \ref{A to R intro}. We will use the following notation:  
\begin{itemize}
    \item Let $\overline{\sW}_\lambda := \sW_{\lambda, \emptyset} \in \Sk(\R \times T^2)$.
    \item The content polynomial is
$C_\lambda(q) := \sum_{(i,j) \in \lambda} q^{j-i}$.  Note $C_\lambda(q) = C_{\lambda^t}(q^{-1})$, and the identity:
$$s_{\ydiagram{1}}(q^{\lambda+\rho}) = (q^{1/2}- q^{-1/2}) C_\lambda(q) + \frac{1}{q^{1/2} - q^{-1/2}}.$$
    \item Let $\lambda + \ydiagram{1}$ (resp. $\lambda - \ydiagram{1}$) denote the sets of 
partitions whose diagram is obtained by adding to (resp.~ removing from) $\lambda$ one box.  
    \item Let $\kappa(\lambda) := 2 \sum_{(i,j) \in \lambda} (j-i)$.   
\end{itemize}
Then, if $\beta \in \lambda + \ydiagram{1}$, we have $C_\beta(q) - C_\lambda(q) = q^{(\kappa(\beta)- \kappa(\lambda))/2}$.

We will also use the following formulas which are readily extracted from \cite{Morton-Samuelson}.  
Acting by the unknot:
\begin{align*}
	\sP_{0,0}\,\,
	\overline{\sW}_{\lambda} 
	& \ = \
	\frac{a - a^{-1}}{q^{1/2}-q^{-1/2}}\,
	\overline{\sW}_{\lambda} = 
    (a-a^{-1})s_{\ydiagram{1}}(q^\rho) \overline{\sW}_{\lambda}.
\end{align*}
Acting by meridian and longitude:
\begin{align*}
    \sP_{1,0}\,\,
	\overline{\sW}_{\lambda} 
	& \ = \ 
	\left( (a-a^{-1})s_{\ydiagram{1}}(q^\rho) + a(s_{\ydiagram{1}}(q^{\lambda+\rho}) - s_{\ydiagram{1}}(q^{\rho})) \right)
	\overline{\sW}_{\lambda}, 
	\\
	\sP_{0,1}\,\,
	\overline{\sW}_{\lambda} 
	& \ = \ 
	\sum_{\alpha \in \lambda + \ydiagram{1}}
	\overline{\sW}_{\alpha}.
\end{align*}
Acting by the negative meridian and a twisted longitude:
\begin{align*}
	\sP_{-1,0}\,
	\overline{\sW}_{\lambda} 
	& \ = \
	\left( (a-a^{-1})s_{\ydiagram{1}}(q^\rho) - a^{-1}(s_{\ydiagram{1}}(q^{\lambda^t+\rho}) - s_{\ydiagram{1}}(q^{\rho})) \right)
	\overline{\sW}_{\lambda}, 
	\\
	\sP_{-1,1}\,\,	\overline{\sW}_{\lambda} 
	& \ = \
	\sum_{\beta \in \lambda + \ydiagram{1}}
	a^{-1} (C_{\beta}(q^{-1}) - C_{\lambda}(q^{-1}))\,
	\overline{\sW}_{\beta},
	\\
    & \ = \ 
    \sum_{\beta \in \lambda + \ydiagram{1}}
	a^{-1} q^{(\kappa(\lambda) - \kappa(\beta))/2} \,
	\overline{\sW}_{\beta}. 
\end{align*}
Acting by the negative and a twisted negative longitude:
\begin{align*}    
	\sP_{0,-1}\,
	\overline{\sW}_{\lambda} 
	& \ = \
	\sum_{\gamma \in \lambda - \ydiagram{1}}
    \overline{\sW}_{\gamma} 
	+ \sW_{\lambda,\overline{\ydiagram{1}}}, 
	\\
	\sP_{1,-1}\,
	\overline{\sW}_{\lambda} 
	& \ = \ 
	- a \sum_{\gamma\in \lambda-\ydiagram{1}} 
	(C_{\gamma}(q) - C_{\lambda}(q))\, 
	\overline{\sW}_{\gamma} 
	+ a^{-1} 
 \sW_{\lambda,\overline{\ydiagram{1}}} 	
    \\ & \ = \ 	
    a \sum_{\gamma \in \lambda - \ydiagram{1}}
	 q^{(\kappa(\lambda)-\kappa(\gamma))/2}
    \overline{\sW}_{\gamma}
    + a^{-1} \sW_{\lambda,\overline{\ydiagram{1}}},
\end{align*}
We will in particular make use of the following operator combinations:  
\begin{align*}
    \left(\sP_{0,0}  - \sP_{1,0}\right)\,
	\overline{\sW}_{\lambda} 
	& \ = \ 
	- a(s_{\ydiagram{1}}(q^{\lambda+\rho}) - s_{\ydiagram{1}}(q^{\rho}))
    \,
	\overline{\sW}_{\lambda}, 
	\\
	\left(\sP_{-1,0}  - \sP_{0,0} \right)\,
	\overline{\sW}_{\lambda} 
	& \ = \
	- a^{-1}(s_{\ydiagram{1}}(q^{\lambda^t+\rho}) - s_{\ydiagram{1}}(q^{\rho})) \,
	\overline{\sW}_{\lambda}, 
	\\
	\left(\sP_{0,-1}  - a \sP_{1,-1}\right)\,
	\overline{\sW}_{\lambda} 
	& \ = \ 	
    \sum_{\gamma \in \lambda - \ydiagram{1}}
	\left(
	1 -
	a^2 q^{(\kappa(\lambda)-\kappa(\gamma))/2}
	\, 
	\right)
    \overline{\sW}_{\gamma}.
\end{align*}

Let
\[
\overline{\sW}_{\lambda_1\lambda_2\lambda_3} := \sW^{(1)}_{\lambda_1,\emptyset}\otimes\sW^{(2)}_{\lambda_2,\emptyset}\otimes \sW^{(3)}_{\lambda_1,\emptyset}
\]
and recall that 
\begin{align*}
\sA_1  \ &= \ a_1^{-1} \left(\sP_{0,0}^{(1)}  - \sP_{1,0}^{(1)} -  a_{1} a_3^2  \,\sP_{0,1}^{(1)}\right)\\
	&- \  a_2 a_3^2 
\left(\sP_{-1,0}^{(2)}  - \sP_{0,0}^{(2)} -  a_3^{-2}  \,\sP_{-1,1}^{(2)}\right)
	 - \ 
	\left(\sP_{0,-1}^{(3)}  - a_3 \sP_{1,-1}^{(3)} -  a_3  \,\sP_{0,0}^{(3)}\right).
\end{align*}

\begin{proof}[Proof of Proposition \ref{A to R intro}]
Suppose given
\be
	\sZ  \ = \ \sum_{\lambda_1,\lambda_2,\lambda_3} \, \mathcal{T}_{\lambda_1\lambda_2\lambda_3} \, \overline{\sW}_{\lambda_1\lambda_2\lambda_3}.	 
\ee
Then the condition that $\sA_1 \sZ =0$ is explicitly expressed in the coefficients
$\mathcal{T}_{\lambda_1, \lambda_2, \lambda_3}$ as: 
\begin{align}\label{eq:Z3-recursion-Tlmn}
&\left(
- (s_{\ydiagram{1}}(q^{\lambda_1+\rho}) - s_{\ydiagram{1}}(q^{\rho}))
+ a_3^2 
(s_{\ydiagram{1}}(q^{\lambda_2^t+\rho}) - s_{\ydiagram{1}}(q^{\rho}))
+(a_3^2 - 1) s_{\ydiagram{1}}(q^{\rho}) \right)
\mathcal{T}_{\lambda_1,\lambda_2,\lambda_3} 
	\\\notag
	&\quad\quad -  a_3^2 
	\sum_{\alpha_1 \in \lambda_1 - \ydiagram{1}}
	\mathcal{T}_{\alpha_1,\lambda_2,\lambda_3} 
	+  
	\sum_{\beta_2 \in \lambda_2 - \ydiagram{1}}
	q^{(\kappa(\beta_2) - \kappa(\lambda_2))/2} \, \mathcal{T}_{\lambda_1,\beta_2,\lambda_3}\\ \notag
	&\quad\quad -
	\sum_{\gamma_3 \in \lambda_3 + \ydiagram{1}}
	\left(
	1 -
	a_3^2 q^{(\kappa(\gamma_3) - \kappa(\lambda_3))/2}\, 
	\right)
	\mathcal{T}_{\lambda_1,\lambda_2,\gamma_3} \ = \ 0,
\end{align}	

We split the equation according to the $a_3$-degree to obtain the  
equations $(\mathrm{R}^3_1)$ and $(\mathrm{R}^3_2)$.  
As both the $\sA_i$ and $(\mathrm{R}_i^j)$ are cyclically symmetric 
for rotating indices $1 \to 2 \to 3$, the corresponding result for $i = 2, 3$ follows.
\end{proof}

\section{Operator equation from geometry  (proof of Theorem \ref{count at infinity intro})}\label{sec : operator equation}
In this section we first recall the general Symplectic Field Theory (SFT) approach to derive skein operator equations and then describe the contact geometry at infinity of the toric branes of the topological vertex and associated moduli spaces of holomorphic curves. Using this we prove Theorem \ref{count at infinity intro}.

\subsection{General strategy} \label{strategy}

Let $W$ be a symplectic Calabi-Yau 3-manifold with ideal convex boundary $\partial W$, and $L \subset W$ a Lagrangian asymptotic to a Legendrian $\partial L \subset \partial W$.  Let us recall from \cite{Ekholm:2018iso,Ekholm:2020csl,ekholm2021coloredhomflypt,Scharitzer-Shende, Ekholm:2024ceb} the method of using curve counts in the symplectization of the boundary  $(\R \times \partial W, \R \times \partial L)$ to determine curve counts in the interior $(W, L)$. 

As always in Floer theory, we obtain relations from the fact that the boundary of a  manifold -- here a moduli of holomorphic curves -- is zero in homology.  In particular, relations between numbers come from the boundaries of 1-dimensional moduli spaces.  Counting curves in the interior $(W,L)$ is an index zero problem, but by allowing (say) one positive puncture at an appropriate Reeb chord of $\partial L$, we find such 1-dimensional moduli.  The boundary of such a space can be identified with a collection of zero dimensional moduli spaces which, as they form a boundary, have total signed contribution zero to any curve count.  Some of these boundary terms -- the ones coming from boundary bubbling -- can be arranged in multiples of the skein relation (by the same argument which shows that the skein-valued curve count is invariantly defined) and so cancel in the skein valued curve counting.  

The remaining terms have to do with bubbling at infinity.  The SFT compactness and gluing theorems \cite{EGH,BEHWZ} (in fact we glue only at transverse Reeb chords, see \cite{EkholmRSFT,CELN}) tell us that such boundaries are indexed by pairs of: a curve in the symplectization  $(\R \times \partial W, \R \times \partial L)$ with the given positive punctures and arbitrary negative punctures, and a curve in the interior with positive punctures matching the aforementioned negative punctures.   There is also possibly breaking at orbits, but in the case at hand $\partial W = S^5$, there are no orbits of low enough index that this breaking can occur, so we omit it from the discussion.

As we have explained, when counted in the  skein, the sum of all such SFT boundary terms is zero.  To express this, we write $\Sk(\R \times \partial L; \mathrm{Reeb})$ to mean the skein of tangles possibly ending at the boundaries of Reeb chords on $\pm \infty \times \partial L$; similarly we write $\Sk(L, \mathrm{Reeb})$ to mean tangles that end at endpoints of Reeb chords in $\partial L$.  
We consider the operation 
$$\Sk(\R \times \partial L; \mathrm{Reeb}) \times \Sk(L, \mathrm{Reeb}) \to \Sk(L, \mathrm{Reeb})$$  which glues tangles if all endpoints match up, and returns zero otherwise. 

We write $\sA^-_\rho \in \Sk(\R \times \partial L; \mathrm{Reeb})$
for the skein-valued count of all rigid curves in the symplectization  with some fixed positive punctures $\rho$ and arbitrary negative punctures, and  $\sZ^+_{W, L} \in \Sk(L, \mathrm{Reeb})$ for the count of all rigid curves in $(W, L)$ with arbitrary positive punctures.
(For this to be well defined without further perturbation, we must assume no curve ever has multiple punctures going to the same Reeb chord; fortunately this will be the case in our application.) 

Then the SFT compactness and gluing gives: 
\begin{equation} \label{general equation}
\sA^-_\rho\cdot  \sZ^+_{W, L} = 0 \ \in \ \Sk(L, \mathrm{Reeb}).
\end{equation}

We are ultimately interested here not in 
$\sZ^+_{W, L}$, but rather in its summand consisting only of compact curves, $\sZ_{W, L} \in \Sk(L)$.   We would like to extract operators $\sA \in \Sk(\R \times \partial L)$ such that $\sA \cdot \sZ_{W, L} = 0$. 

In especially fortunate situations, when there are no Reeb chords of low enough index, one already has $\sZ_{W, L} = \sZ^+_{W, L}$, and to obtain the desired $\sA$ it only remains to choose some capping path for the positive end $\rho$.  For examples, see \cite{Ekholm:2020csl,Scharitzer-Shende}. 

We will not be so fortunate here.  More generally, one possibility would be to 
solve for the entire $\sZ^+_{W, L}$, and then take the compact curves at the end.  However, there is at least sometimes a shortcut:  some linear combination $\sum c_\rho \sA_\rho$ may give a tangle with no negative ends, which, therefore, necessarily annihilates $\sZ_{W, L}$.  Finding such linear combinations is at present an art rather than a science (we do not have a skein-valued elimination theory), but we have done this previously in simpler examples \cite{ekholm2021coloredhomflypt, Ekholm:2024ceb}, and will do so again here to obtain the desired operator equation.

\subsection{The toric branes of the topological vertex}\label{ssec : Lagrangians}
For $j=1,2,3$, fix positive real numbers $r_j>0$ and angles $\theta_j\in S^1 = \R/2\pi\Z$.
We will consider the following four configurations of Lagrangians of the topological vertex, parameterized by $(\rho_j,\alpha_j,\beta_j,)\in {\mathbb R}_+\times S^{1}\times S^{1}$,  see Figure \ref{fig:fillings}: 
on different legs
\begin{align*}
L_1 \ &: \ \left((\rho_1^2+r_1^2)^{1/2}e^{i\alpha_1}, \rho_1e^{i\beta_1},\rho_1e^{-i(\alpha_1+\beta_1-\frac{\pi}{2}+\theta_1)}\right),\\  
L_2 \ &: \ \left(\rho_2e^{i\alpha_2}, (\rho_2^2+r_2^2)^{1/2}e^{i\beta_2},\rho_2 e^{-i(\alpha_2+\beta_2-\frac{\pi}{2}+\theta_2)}\right),\\ 
L_3 \ &: \ \left(\rho_3e^{i\alpha_3}, \rho_3e^{i\beta_3},(\rho_3^2+r_3^2)^{1/2} e^{-i(\alpha_3+\beta_3-\frac{\pi}{2}+\theta_3)}\right);
\end{align*}
two on the same leg first cyclic order
\begin{align*}
L_1 \ &: \ \left((\rho_1^2+r_1^2)^{1/2}e^{i\alpha_1}, \rho_1e^{i\beta_1},\rho_1e^{-i(\alpha_1+\beta_1-\frac{\pi}{2}+\theta_1)}\right),\\  
L_2 \ &: \ \left((\rho_2^2+r_2^2)^{1/2}e^{i\alpha_2}, \rho_2e^{i\beta_2},\rho_2 e^{-i(\alpha_2+\beta_2-\frac{\pi}{2}+\theta_2)}\right),\\ 
L_3 \ &: \ \left(\rho_3e^{i\alpha_3}, (\rho_3^2+r_3^2)^{1/2}e^{i\beta_3}, \rho_3e^{-i(\alpha_3+\beta_3-\frac{\pi}{2}+\theta_3)}\right);
\end{align*}
two on the same leg second cyclic order
\begin{align*}
L_1 \ &: \ \left((\rho_1^2+r_1^2)^{1/2}e^{i\alpha_1}, \rho_1e^{i\beta_1},\rho_1e^{-i(\alpha_1+\beta_1-\frac{\pi}{2}+\theta_1)}\right),\\  
L_2 \ &: \ \left((\rho_2^2+r_2^2)^{1/2}e^{i\alpha_2}, \rho_2e^{i\beta_2},\rho_2 e^{-i(\alpha_2+\beta_2-\frac{\pi}{2}+\theta_2)}\right),\\ 
L_3 \ &: \ \left(\rho_3e^{i\alpha_3}, \rho_3e^{i\beta_3}, (\rho_3^2+r_3^2)^{1/2}e^{-i(\alpha_3+\beta_3-\frac{\pi}{2}+\theta_3)}\right);
\end{align*}
where in both cases we take $r_2<r_1$, and
all on the same leg with $r_3<r_2<r_1$,
\begin{align*}
L_1 \ &: \ \left((\rho_1^2+r_1^2)^{1/2}e^{i\alpha_1}, \rho_1e^{i\beta_1},\rho_1e^{-i(\alpha_1+\beta_1-\frac{\pi}{2}+\theta_1)}\right),\\  
L_2 \ &: \ \left((\rho_2^2+r_2^2)^{1/2}e^{i\alpha_2}, \rho_2e^{i\beta_2},\rho_2 e^{-i(\alpha_2+\beta_2-\frac{\pi}{2}+\theta_2)}\right),\\ 
L_3 \ &: \ \left((\rho_3^2+r_3^2)^{1/2}e^{i\alpha_3}, \rho_3e^{i\beta_3}, \rho_3e^{-i(\alpha_3+\beta_3-\frac{\pi}{2}+\theta_3)}\right).
\end{align*}

\begin{figure}
\centering
\begin{minipage}{0.15\textwidth}\centering
\includegraphics[width=1.0\textwidth]{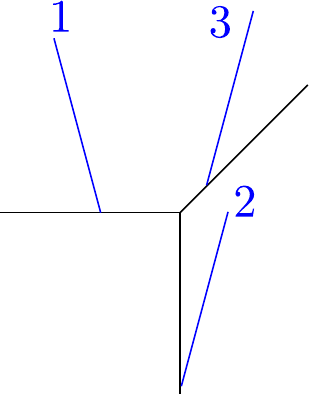}
\subcaption*{Filling 1}\label{fig:fill4}\end{minipage}
\hfill
\begin{minipage}{0.15\textwidth}\centering
\includegraphics[width=1.0\textwidth]{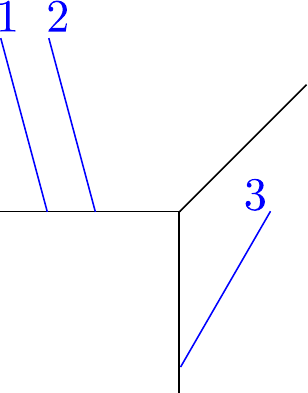}
\subcaption*{Filling 2}\label{fig:fill2}\end{minipage}
\hfill
\begin{minipage}{0.15\textwidth}\centering
\includegraphics[width=1.0\textwidth]{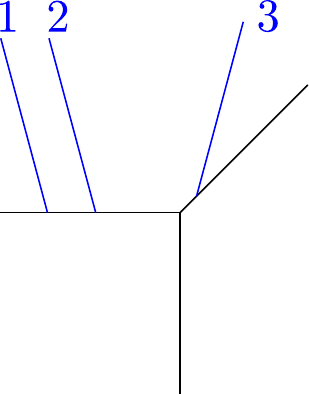}
\subcaption*{Filling 3}\label{fig:fill3}\end{minipage}
\hfill
\begin{minipage}{0.15\textwidth}\centering
\includegraphics[width=1.0\textwidth]{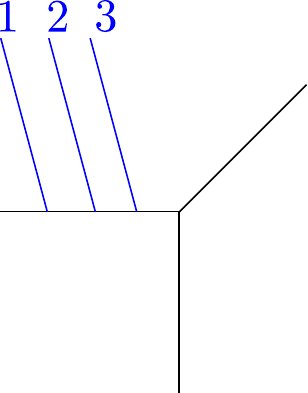}
\subcaption*{Filling 4}\label{fig:fill1}\end{minipage}
\caption{Fillings} \label{fig:fillings}
\end{figure}

We first note that for $\theta_j=0$, $j=1,2,3$, the Lagrangians are asymptotic to the same Legendrian in $S^5$,
\begin{equation*}
    \frac{1}{\sqrt{3}}\left(e^{i\alpha}, e^{i\beta},e^{-i(\alpha+\beta-\frac{\pi}{2})}\right), \quad\alpha,\beta\in S^1\times S^1,
\end{equation*}
with projection to $\mathbb{C}P^2$ that gives a threefold cover of the Clifford torus
\begin{equation*}
   \left[e^{i\phi} : e^{i\theta} : 1\right],\quad \phi,\theta\in S^1. 
\end{equation*}

Note then that increasing $\theta_j$ from $0$ to $t$ corresponds geometrically to a shifting of $\Lambda_j$ $t$ units along the Reeb flow in $S^5$. We will use a symmetric configuration of the three branes and take $\Lambda_1$ with $\theta_1=0$, $\Lambda_2$ with $\theta_2=\frac{2\pi}{9}$, and $\Lambda_3$ with $\theta_3=\frac{4\pi}{9}$. Noting that a shift by $\frac{2\pi}{3}$ takes $\Lambda_j$ to itself, we write $\Lambda_k$ for the shift by $\frac{2\pi k}{9}$. 

\subsection{Leading disk and annulus contributions}
We will determine the details of the skein operator equation using information about the simplest compact curves with boundary on one or two toric branes on different legs. These curves are as follows.
\begin{lemma}\label{l : simple disks and annuli}
$\quad$
\begin{itemize}
\item[$(a)$]
There is a single holomorphic disk on $L_k$ that lies in the $k^{\rm th}$ coordinate line and has boundary along the curve given by $\rho_k=0$. 
\item[$(b)$]
The nodal annuls corresponding to the union of any two such disks contributes a single holomorphic annulus stretching between branes. 
\end{itemize}
\end{lemma}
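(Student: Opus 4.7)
The plan for (a) is direct. I would exhibit the map $\phi_k : \overline{D}^2 \to \C^3$ whose $k$-th coordinate is $r_k \zeta$ and whose other coordinates are $0$. It is holomorphic (its image lies in a complex line) and its boundary equals the $\rho_k = 0$ locus of $L_k$ by inspection of the parametrization in Section~\ref{ssec : Lagrangians}. For uniqueness: if $u : D^2 \to \C^3$ is any holomorphic disk with boundary on the circle $\{|z_k| = r_k, \ z_i = 0 \text{ for } i \neq k\}$, then for each $i \neq k$ the composition $z_i \circ u$ is holomorphic on $D^2$ and vanishes on $\partial D^2$, hence vanishes identically (its real and imaginary parts are harmonic functions vanishing on $\partial D^2$). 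Therefore $u$ is confined to the $k$-th coordinate line, and within this $\C$ the classification of holomorphic disks with boundary on $\{|z_k|=r_k\}$ in the basic homology class pins $u$ down to $\phi_k$ up to reparametrization.

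For (b), I would apply the same positivity/vanishing argument and then invoke nodal gluing. Let $i \neq j$ and consider the nodal stable map $\phi_i \cup \phi_j$ whose node lies at the origin, where the two coordinate lines meet transversally. Any holomorphic annulus $u$ with boundary circles on the $z_i$- and $z_j$-axes (of radii $r_i$ and $r_j$) has $z_k \circ u \equiv 0$ for $k \notin \{i, j\}$, since $z_k$ vanishes on both boundary components and the harmonic argument forces $z_k \circ u \equiv 0$; thus $u$ lies entirely in the $(z_i, z_j)$-plane. The problem reduces to counting holomorphic annuli in $\C^2$ with boundary on two orthogonal axis circles, a classical computation which yields a single annulus in the homology class $[\phi_i] + [\phi_j]$, precisely the smooth annulus produced by SFT gluing of the node at the origin, as in \cite{BEHWZ,EGH}.

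The main technical obstacle is establishing that the SFT gluing map is a local diffeomorphism onto the actual moduli of nearby annuli, with correct sign and framing contributions to the skein-valued count. Transversality of the linearized gluing operator follows from transversality of the two disks at the origin together with each being an embedded Maslov-$2$ disk; standard automatic transversality and convexity-at-infinity results handle this. Uniqueness in the full class is then inherited from the planar reduction in the previous paragraph, so no additional annuli can appear and the nodal configuration contributes exactly one smooth annulus to the count.
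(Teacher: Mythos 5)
Part (a) is essentially fine as far as it goes: the explicit coordinate disk and the reflection/unique-continuation argument confining any disk with boundary on the $\rho_k=0$ circle to the $k$-th coordinate line are exactly what the paper leaves to the reader. But what the skein-valued count actually requires, and what the paper explicitly records in its proof of (a), is that this disk is \emph{transversely cut out}; you never address this in (a), and the only place you touch transversality is an appeal to ``automatic transversality for embedded Maslov-$2$ disks,'' which does not apply here: the basic disks are rigid, i.e.\ have Maslov index $0$ (the toric branes are special Lagrangian, so the Maslov class vanishes), and Maslov-$2$ automatic transversality is in any case a four-dimensional phenomenon.

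The genuine gap is in (b). Your reduction to the $(z_i,z_j)$-plane is correct, but the conclusion you draw from the ``classical computation'' is the opposite of the truth: there is \emph{no} smooth holomorphic annulus with one boundary on $\{|z_i|=r_i,\ z_j=0\}$ and the other on $\{z_i=0,\ |z_j|=r_j\}$. The $j$-th coordinate of such a map would be a holomorphic function on the annulus vanishing identically on one boundary circle, hence, by Schwarz reflection and unique continuation, identically zero, contradicting $|z_j|=r_j$ on the other boundary component. So gluing the node does not produce an actual holomorphic annulus for the given Lagrangians; the unique annulus is the nodal one, exactly as the lemma states. The nontrivial content of (b) is therefore not the existence of a glued smooth annulus but the assertion that the nodal configuration is a transversely cut out point of the compactified moduli space of annuli and hence contributes exactly one annulus to the count. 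The paper establishes this by parametrizing a neighborhood of the nodal curve in the space of annuli (in the style of \cite{HWZGWpolyfolds}) and proving surjectivity of the linearized operator, which it reduces to a Floer gluing theorem for the family obtained by translating $L_2$ in the $z_3$-direction: transversely cut out smooth annuli exist precisely over a punctured neighborhood of the origin and degenerate to the nodal annulus at the origin. Your proposal contains no substitute for this step, and its picture of a nearby smooth annulus for the unperturbed configuration would make the lemma false as you have argued it.
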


\begin{proof}
The coordinate disks are easy to find and it easy to check that they are transversely cut out, this proves $(a)$. Further, it is straightforward to check that there is exactly one annulus stretching between two of the branes, it is nodal and has image the union of the two disks. We need to check that it contributes by one annulus to the partition function. This requires studying the variation of the nodal annulus in the space of annuli. Using the parametrization of such a neighborhood as in \cite{HWZGWpolyfolds} this amounts to proving that the appropriate linearized operator is surjective, which technically is the same a proving a Floer gluing theorem for gluing the disk on $L_1$, considered as stationary, to the family of disks on $L_2$ obtained by translating $L_2$ in the $z_3$-plane. Such gluing arguments are standard and gives a family of transveresly cut out annuli over a punctured neighborhood $U$ of the origin in the $z_3$-plane, compactified by the nodal annuli between $L_1$ and the original $L_2$.     
\end{proof}

\begin{remark}
We point out that the count of annuli in Lemma \ref{l : simple disks and annuli} could also be carried out by observing that it is given by the extremal part of the HOMFLYPT polynomial of the Hopf link in $S^3$, by \cite{SoB, Ooguri:1999bv}.
\end{remark}

\subsection{Reeb chords and moduli spaces at infinity}
Reeb chords of $\Lambda=\Lambda_1\cup\Lambda_2\cup\Lambda_3$  come in Bott families, where the families are copies of the torus components of the link.  The following are the Reeb chord families of minimal action:
\begin{itemize}
    \item The families $B(a_{k,k+1})$ of action $\frac{2\pi}{9}$ chords $\Lambda_k\to\Lambda_{k+1}$   with minimum chord $a_{k,k+1}$.
    \item The families $B(b_{k,k+2})$ of action $\frac{4\pi}{9}$ chords $\Lambda_k\to\Lambda_{k+2}$   with minimum chord $b_{k,k+2}$.
    \item The families $B(c_{k,k})$ of action $\frac{2\pi}{3}$ chords $\Lambda_k\to\Lambda_k$ with minimum chord $c_{k,k}$.
\end{itemize}
We think of the symplectization of $S^5$ as $\C^3\setminus\{0\}$ and use the holomorphic projection $\C^3\setminus\{0\}\to \C P^2$. As usual in symplectizations, we use a complex structure invariant under $\R$-translation (corresponds to scaling by $\R_+$ in $\C^3$) and holomorphic curves will therefore always come in $\R$-families. We say that a curve is rigid if its moduli space is transversely cut out and canonically isomorphic to $\R$ via translations. We say that a curve with one positive puncture is \emph{Bott rigid} if it is rigid after fixing the positive puncture or equivalently if the tangent space is Bott transverse and equal to that of the Bott manifold at the positive puncture.  
\begin{lemma}\label{l : Bott curves}
Moduli spaces of Bott-rigid holomorphic curves in $\C^3$ with boundary on $\Lambda\times\R$ and positive puncture at a chord of action $\le\frac{2\pi}{3}$ are the following: 
\begin{itemize}
\item[$(i)$] Three families of disks with positive puncture at $c\in B(c_{k,k})$ and no other punctures.  The boundaries of these disks project to disjoint embedded paths in $\Lambda$ between the endpoints of the chord $c_{k,k}$. 
\item[$(ii)$] Two families of triangles with positive puncture at $c\in B(c_{k,k})$ and negative punctures, at $a\in B(a_{k,k+1})$ and $b\in B(b_{k+1,k+3})$, respectively, at $b\in B(b_{k,k+2})$ and $a\in B(a_{k+2,k+3})$. These triangles are contained in the fiber of the Hopf fibration.
\item[$(iii)$] One family of triangles with positive puncture at $b\in B(b_{k,k+2})$ and negative punctures at $a\in B(a_{k,k+1})$ and $a'\in B(a_{k+1,k+2})$. These triangles are contained in the fiber of the Hopf fibration. 
\end{itemize}
\end{lemma}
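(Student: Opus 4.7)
The plan is to exploit the holomorphic projection $\pi\colon\C^3\setminus\{0\}\to\C P^2$, whose fibers are orbits of the diagonal $\C^*$-action. The Reeb flow on $S^5$ is the phase part of this action, so it descends to the identity on $\C P^2$, and consequently all three pieces $\R\times\Lambda_k$ project to the \emph{same} Clifford torus $T_{\mathrm{Cliff}}\subset\C P^2$ via the threefold cover noted in Section~\ref{ssec : Lagrangians}. A Bott-rigid holomorphic curve $u$ in the symplectization therefore either has constant projection (the fiber case) or projects to a non-constant holomorphic disk in $\C P^2$ bounded by $T_{\mathrm{Cliff}}$, and the SFT action identity bounds the symplectic area of $u$ and of $\pi\circ u$ by the action of the positive puncture, which is at most $\tfrac{2\pi}{3}$.

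\emph{Fiber case.} In a $\C^*$-fiber, each $\R\times\Lambda_k$ meets the fiber in three rays from the origin (the three sheets of the cover), and the nine rays coming from $k=1,2,3$ are rotated relative to one another by multiples of $\tfrac{2\pi}{9}$ reflecting the Reeb shifts $\theta_k=\tfrac{2\pi k}{9}$. A finite-energy holomorphic map into $\C^*$ with boundary on such rays lifts through $\log$ to a planar polygon bounded by vertical straight lines, so its combinatorial type is determined by the cyclic sequence of rays met along the boundary. Enumerating those sequences having a single positive puncture of action $\le\tfrac{2\pi}{3}$ and negative punctures at the chord types $a_{k,k+1}$, $b_{k,k+2}$ yields precisely the triangle families of $(ii)$ and $(iii)$; the area identities $\tfrac{2\pi}{3}=\tfrac{2\pi}{9}+\tfrac{4\pi}{9}$ and $\tfrac{4\pi}{9}=\tfrac{2\pi}{9}+\tfrac{2\pi}{9}$ make the balance exact, and regularity is automatic in the flat picture.

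\emph{Non-fiber case.} Here $\pi\circ u$ is a non-constant holomorphic disk in $\C P^2$ with boundary on $T_{\mathrm{Cliff}}$ and area at most $\tfrac{2\pi}{3}$. The classical list of such disks consists of three families of Maslov-$2$ disks, one associated to each toric divisor of $\C P^2$, each of area exactly $\tfrac{2\pi}{3}$; hence any non-trivial projection is one of these and consumes the entire action budget, so $u$ has no negative punctures and its positive puncture lies in some $B(c_{k,k})$. Each of the three Maslov-$2$ types has boundary loop lifting through the threefold cover $\Lambda_k\to T_{\mathrm{Cliff}}$ to an embedded path on $\Lambda_k$ joining the two endpoints of $c_{k,k}$, and the three resulting lifts are exactly the three families of $(i)$ with disjoint boundary projections.

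The main obstacle is the regularity step for the disks of $(i)$: one must show that the linearized $\bar\partial$-operator is transversely surjective to the Bott direction at the positive puncture. I would deduce this from the well-known automatic regularity of Maslov-$2$ disks bounded by the Clifford torus in $\C P^2$ (a consequence of positivity of intersection with the toric divisors), combined with the fact that regularity is preserved by lifting through the étale cover $\Lambda_k\to T_{\mathrm{Cliff}}$; the fiber triangles of $(ii)$ and $(iii)$ need no additional argument, as their transversality is visible in the flat description on $\C^*$.
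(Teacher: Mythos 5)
Your proposal is correct and follows essentially the same route as the paper: both split curves by their image under the Hopf projection to $\C P^2$, identify the zero-projected-area curves as polygons in a fiber bounded by the nine rays (which the paper describes as sectors with slits, equivalent to your log-lifted picture) and identify the positive-area curves with the standard minimal-area Maslov-two disks on the Clifford torus, whose area $\tfrac{2\pi}{3}$ exhausts the action budget. The only cosmetic difference is that the paper writes the three disk families and their boundary paths out explicitly (which it needs later for the framing and capping-path arguments), whereas you invoke the Cho--Oh classification and automatic regularity abstractly.
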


\begin{proof}
Note that there is a natural 1-1 correspondence between holomorphic curves in $\C^3\setminus\{0\}$ with boundary on $\Lambda$ and curves in $\C P^2$ with boundary on the Clifford torus together with a meromorphic section of the Hopf bundle. Holomorphic curves with boundary on the Clifford torus of minimal area are well known and give the first families. Explicitly, the three disk families are given by maps $u_j\colon D\to \C^3\setminus\{0\}$, where $D$ denotes the unit disk in the complex plane as follows:
\begin{align*}
u_1(z) &= (e^{i\alpha}z-1)^{-\frac{2}{3}}\left(e^{i\alpha}z,e^{i\beta},ie^{-i\beta}\right),\quad \alpha,\beta\in[0,2\pi);\\
u_2(z) &= (e^{i\beta}z-1)^{-\frac{2}{3}}\left(e^{i\alpha},e^{i\beta}z,ie^{-i\alpha}\right),\quad \alpha,\beta\in[0,2\pi);\\
u_3(z) &= (e^{i\gamma}z-1)^{-\frac{2}{3}}\left(e^{i\alpha},e^{-i\alpha},ie^{i\gamma}z\right),\quad \alpha,\gamma\in[0,2\pi).
\end{align*}
For a fixed Reeb chord, starting at $(1,1,1)$, say, the boundaries of the three holomorphic disks projected to the Legendrian torus $\Lambda$ are the following curves: 
\begin{align}\label{eq : disk j}
p_1 \;:\; u_1(e^{i\theta}) &= \left(e^{i\frac{2\theta}{3}},e^{-i\frac{\theta}{3}},ie^{-i\frac{\theta}{3}}\right),\quad \theta\in(0,2\pi);\\\notag
p_2 \; : \; u_2(e^{i\theta}) &= \left(e^{-i\frac{\theta}{3}},e^{i\frac{2\theta}{3}},ie^{-i\frac{\theta}{3}}\right),\quad \theta\in(0,2\pi);\\\notag
p_3 \; : \;u_3(e^{i\theta}) &= \left(e^{-i\frac{\theta}{3}},e^{-i\frac{\theta}{3}},ie^{i\frac{2\theta}{3}}\right),\quad \theta\in(0,2\pi),
\end{align}
see Figure \ref{fig:reebchordendpoints}.

\begin{figure}
    \centering
    \includegraphics[width=0.3\linewidth]{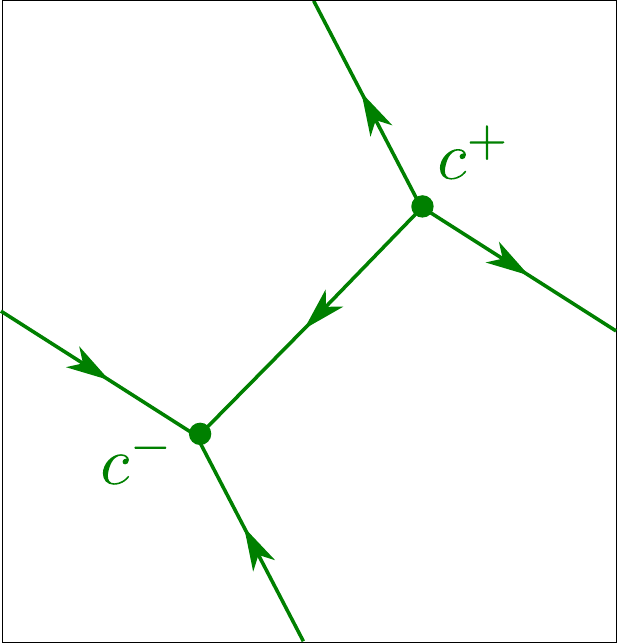}
    \caption{The boundaries of  three disks on the torus at infinity.}
    \label{fig:reebchordendpoints}
\end{figure}


The curves in families $(i)$ and $(ii)$ have zero projected area and must therefore be contained in the fiber. Each fiber is a copy of the complex plane $\C$ and the intersection of the fiber with the three Lagrangians are rotations of the following configuration of rays:
\begin{align*}
\Lambda_1\times \R \cap \C^\ast &= \R_+\cup e^{i\frac{2\pi}{3}}\R_+\cup e^{i\frac{4\pi}{3}}\R_+,\\   
\Lambda_2\times \R \cap \C^\ast &= e^{i\frac{2\pi}{9}}\R_+\cup e^{i\frac{8\pi}{9}}\R_+\cup e^{i\frac{14\pi}{9}}\R_+,\\
\Lambda_3\times \R \cap \C^\ast &= e^{i\frac{4\pi}{9}}\R_+\cup e^{i\frac{10\pi}{9}}\R_+\cup e^{i\frac{16\pi}{9}}\R_+,
\end{align*}
It is then elementary to determine the curves in $(ii)$ and $(iii)$. We illustrate by two examples. A curve in $(ii)$ is given by the sector bounded by $\R_+$ and $e^{i\frac{2\pi}{3}}\R_+$ with a slit in the ray $e^{i\frac{2\pi}{9}}\R_+$ or with a slit in the ray $e^{i\frac{4\pi}{9}}\R_+$. A curve in $(iii)$ is given by the sector bounded by $\R_+$ and $e^{i\frac{4\pi}{9}}$ with a slit in the ray $e^{i\frac{2\pi}{9}}\R_+$. 
\end{proof}

\subsection{Framing, $4$-chain, and spin structure choices} \label{framing spin choices}
In the skein-valued curve counting of \cite{SoB}, we equip each Lagrangian $L$ as always with a spin structure $\sigma$, but also with the data of a vector field $v$ and 4-chain $W$, with the properties that $\partial W  = 2L$ and $W = \pm Jv$ near $L$.  In the noncompact case, we ask that $W, v$ are (asymptotically) scale invariant near infinity.  Here we fix these choices.

We construct the $4$-chain $W_1$ of $L_1$ as follows. Foliate $L_1$ by `meridional circles' $S_{(\rho_1,\alpha)}$:  
\[
S_{(\rho_1,\alpha)} = \left((r_1^2+\rho_1^2)^{1/2}e^{i\alpha},\rho_1e^{i\beta},\rho_1e^{-i(\alpha+\beta-\frac{\pi}{2})}\right), \quad 0\le \beta<2\pi.
\] 
Then $S_{(\rho_1,\alpha)}$ lies in the coordinate plane $\approx\C^2$ of the last two coordinates given by the equation $z_1=(\rho_1^2+r_1^2)^{1/2}e^{i\alpha}$, inside a Lagrangian plane $\Pi_{(\rho_1,\alpha)}\subset\C^2$ given by the equation $iz_3=e^{i\alpha}\overline{z_2}$. Let $w_{(\rho_1,\alpha)}$ denote a unit vector field in $\C^2$ perpendicular to $\Pi_{(\rho_1,\alpha)}$. Then $w_{(\rho_1,\alpha)}$ gives a vector field in an $SO(2)$-bundle over $S^1\times\R_+$ with fiber at $(\alpha,\rho_1)$ the orthogonal complement to $\Pi_{(\rho_1,\alpha)}\subset\C^2$. Our $4$-chain will depend on the choice of vector field $w_1=w_{(\rho_1,\alpha)}$ and we denote it $W_1(w_1)$ to indicate this dependence.

Let $\Gamma_{(\rho_1,\alpha)}^\pm$ denote the cylinder that is the union of length $\frac12\rho_1$ intervals over $S_{(\rho_1,\alpha)}$ along the line in direction $i\cdot\nu$, where $\nu$ is the normal to the circle in the plane. For concreteness, if we identify the  Lagrangian plane with $\R^2\subset \C^2$ with the radius $\rho_1$ circle 
\[
S_{(\rho_1,\alpha)}\approx(x_1,y_1,x_2,y_2)=(\rho_1\cos\beta,0,\rho_1 \sin\beta,0),\quad 0\le \beta<2\pi,
\]
we have
\[
\Gamma_{(\rho_1,\alpha)}^{\pm}\approx (\rho_1\cos\beta,\pm s\rho_1\cos\beta,\rho_1 \sin\beta,\pm s\rho_1 \sin\beta),\quad 0\le \beta<2\pi,\; 0\le s\le \tfrac12. 
\]
Let $\phi\colon [0,1]\to[0,1]$ be an approximation of the identity function equal to $0$ in a neighborhood of $0$ and equal to $1$ in a neighborhood of $1$ and take $D_{(\rho_1,\alpha)}^\pm$ be the smooth disk through $\pm\rho_1w_{(\rho_1,\alpha)}$ given in coordinates by
\begin{align*}
D_{(\rho_1,\alpha)}^\pm \ =& \ (\sigma\rho_1\cos\beta, 0,\sigma\rho_1 \sin\beta,0) \\ 
+& \ \phi(\sigma)\left(0,\pm \tfrac12\rho_1\cos\beta,0,\pm \tfrac12\rho_1 \sin\beta)
 + \ \pm(1-\phi(\sigma)\right)\rho_1 w,\\
 & \ 0\le\beta<2\pi,\;0\le \sigma\le 1,
\end{align*}
where the unit vector $w$ corresponds to $w_{(\rho_1,\alpha)}$.
Define, $\Sigma^{\pm}_{(\rho_1,\alpha)}=\Gamma_{(\rho_1,\alpha)}^{\pm}\cup D_{(\rho_1,\alpha)}^\pm$ and take
the $4$-chain $W_1(w_1)$ as
\[
W_1(w_1)=\bigcup_{(\rho_1,\alpha_1)\in \R_+\times S^1}\left(\Sigma_{(\rho_1,\alpha)}^{+}\cup \Sigma_{(\rho_1,\alpha)}^{-}\right).
\]
Then $\partial W_1(w_1)=2\cdot L_1$.

We define $W_2(w_2)$ and $W_3(w_3)$ in the same way using foliations by meridional circles in the coordinate planes of the first and last and the two first coordinates, respectively.
\begin{lemma}\label{l : first properties of 4-chain}
Let $\nu$ denote the inward normal of the $4$-chains $W_k(w_k)$. Then the following holds.  
\begin{itemize}
\item[$(a)$] Outside a neighborhood of the central circle, $\rho_k=0$, $J\nu$ is a radial vector field and thus meets the required conditions at infinity.
\item[$(b)$] Along the central circle $\rho_k=0$, $J\nu$ agrees with the vector field $J w_{k}$.
\item[$(c)$] If $D_k$ denotes the basic holomorphic disks on $L_k$ in Lemma \ref{l : simple disks and annuli} then $W_k\cap \mathrm{int}(D_j)=\emptyset$ for all $k,j$.
\end{itemize}
\end{lemma}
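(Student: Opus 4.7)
The plan is to prove (a), (b), (c) directly from the explicit formulas defining $W_k(w_k)$; since $W_1,W_2,W_3$ are related by cyclic permutation of the three coordinates of $\C^3$, it suffices to treat $W_1$.

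For (a), I would fix a point $p$ in the interior of one of the $\Gamma^\pm$ cylinders, i.e.\ with $s\in(0,1/2)$ so that the $D^\pm$ smoothing is not yet active. The tangent space $T_pW_1\subset T_p\C^3$ is spanned by the four explicit vectors $\partial_{\rho_1},\partial_\alpha,\partial_\beta,\partial_s$, which I would write out in global $\C^3$-coordinates using the formulas for $\Gamma^\pm_{(\rho_1,\alpha)}$ together with $z_1=(\rho_1^2+r_1^2)^{1/2}e^{i\alpha}$. The $\Gamma^\pm$ piece is homogeneous of degree one in $\rho_1$, the Lagrangian plane $\Pi_{(\rho_1,\alpha)}$ rotates linearly with $\alpha$, and $\partial_\beta,\partial_s$ lie entirely in the $\C^2$-fiber while $\partial_{\rho_1},\partial_\alpha$ carry controlled $z_1$-components. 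From these it is routine to solve for the unique unit vector $\nu$ orthogonal to $T_pW_1$ and to verify that $J\nu$ is proportional to the position vector of $p$, i.e.\ is radial. Radial vector fields are scale-equivariant under the Liouville flow on $\C^3$, which gives the required condition at infinity.

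For (b), I would specialize to the central circle $\rho_1=0$. There the $\Gamma^\pm$ cylinders have length $\tfrac12\rho_1\to 0$ and degenerate, whereas the disks $D^\pm$ limit through their tips $\pm\rho_1 w_{(\rho_1,\alpha)}$ to a transverse direction $w_{(\rho_1,\alpha)}=w_1$ at $L_1$. Reading off the tangent plane to $W_1$ at such a limiting point directly from the $D^\pm$ parametrization (where the linear term in $1-\phi(\sigma)$ carries the $w$-direction) yields $\nu=w_1$, and hence $J\nu=Jw_1$.

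For (c), the essential observation is that every point of $W_k$ satisfies $|z_k|=(\rho_k^2+r_k^2)^{1/2}\ge r_k>0$. The interior of $D_k$ lies in the $k$th coordinate line with $|z_k|<r_k$, so $W_k\cap\mathrm{int}(D_k)=\emptyset$; for $j\ne k$, the interior of $D_j$ lies in the $j$th coordinate line with, in particular, $z_k=0$, which again contradicts $|z_k|\ge r_k>0$. Thus $W_k\cap\mathrm{int}(D_j)=\emptyset$ for all $k,j$.

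The main technical obstacle is the calculation in (a): one must carry along both the $\alpha$-dependent rotation of the Lagrangian plane $\Pi_{(\rho_1,\alpha)}$ and the $r_1$-correction in the $z_1$-coordinate, and verify that the cross terms in the four tangent vectors conspire to make the normal $\nu$ send, under $J$, to a vector pointing exactly in the radial direction (not merely asymptotically). Once this is unpacked, (b) follows from the analogous and simpler calculation for $D^\pm$ at $\rho_1=0$, and (c) is the bookkeeping indicated above.
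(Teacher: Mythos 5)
Your part (c) is exactly the paper's argument: $W_k(w_k)$ is contained in $\{|z_k|\ge r_k\}$ while every $\mathrm{int}(D_j)$ lies in its complement. The paper disposes of (a) and (b) with ``evident by construction,'' so your plan of reading them off the explicit parametrizations is the right idea. But the way you set up (a) would not go through as written. The inward normal $\nu$ of the $4$-chain is a vector field along its boundary $\partial W_k(w_k)=2L_k$, namely the unit vector in $TW_k$ orthogonal to $TL_k$ and pointing into the chain. At an interior point $p$ of $\Gamma^\pm$ with $s\in(0,\tfrac12)$, the orthogonal complement of the $4$-plane $T_pW_1$ inside $T_p\C^3\cong\R^6$ is $2$-dimensional, so there is no ``unique unit vector orthogonal to $T_pW_1$'' to solve for; the computation you describe is not the one the lemma calls for. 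The relevant computation is at $s=0$ and is immediate: the sheet $\Gamma^\pm$ leaves the point $(\rho_1\cos\beta,0,\rho_1\sin\beta,0)$ of $S_{(\rho_1,\alpha)}$ in the direction $\pm(0,\rho_1\cos\beta,0,\rho_1\sin\beta)=\pm i\cdot(\text{radial vector of }\Pi_{(\rho_1,\alpha)})$, so $\nu$ is $\pm i$ times the radial field of the foliating Lagrangian plane and $J\nu$ is (up to sign) that radial field.

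Relatedly, your conclusion that $J\nu$ is ``proportional to the position vector of $p$'' in $\C^3$ is false: a point of $L_1$ has $z_1$-coordinate $(\rho_1^2+r_1^2)^{1/2}e^{i\alpha}\ne 0$, whereas $J\nu$ has vanishing $z_1$-component. What is true, and what the lemma means, is that $J\nu$ is the radial vector field of the planes $\Pi_{(\rho_1,\alpha)}$ inside the $\C^2$-fibre; this is homogeneous of degree one under scaling and hence (asymptotically) scale-invariant, which is the condition actually required at infinity. With (a) corrected in this way, your treatment of (b) (the tip direction $\pm\rho_1 w$ of $D^\pm$ dominating as $\rho_1\to 0$) and of (c) agrees with the paper.
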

\begin{proof}
Properties $(a)$ and $(b)$ are evident by construction. For $(c)$, note that the $4$-chain $W_k(w_k)$ is contained in the region  $\{|z_k|\ge r_k\}$ and that all basic disks have interiors in the complement to that region. 
\end{proof}

Consider the boundaries $p_k$ of the disks in \eqref{eq : disk j}. Here the curves $p_{k+1}-p_{k+2}$ and $p_{k}-p_{k+2}$ gives two closed curves on the Legendrian $\Lambda_k$. If we equip these curves with the framing given by the outward normal vector field they give elements $\sP_{1,0}$ and $\sP_{0,1}$ in the skein of $\Lambda_k$. 

Let $v_k=J(\nu)$, where $\nu$ is the inward normal to $W_k(w_k)$ then flowing the central circle $\rho_k=0$ in $L_k$ toward infinity along $v_k$ we get a curve $l_k(w_k)$ in $\Lambda_k$ that is homologous to $f\sP_{1,0}+\sP_{0,1}$ for some $f\in\Z$, $[l_k(w_k)]=f[\sP_{1,0}]+[\sP_{0,1}]$.  
\begin{lemma}\label{l : vector filed for 4-chain}
There is a unique homotopy class of vector fields $w_k$ such $f=0$, i.e., such that
\[
[l_k(w_k)]=0\cdot[\sP_{1,0}]+[\sP_{0,1}].
\]
\end{lemma}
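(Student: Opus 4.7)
The plan is to first identify the space of homotopy classes of $w_k$ as a $\Z$-torsor, then show that the assignment $w_k \mapsto f$ is equivariant for the natural $\Z$-action on the right-hand side, and finally conclude uniqueness.

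First, I would identify the space of homotopy classes of choices for $w_k$. By construction, $w_k$ is a unit section of the rank-2 real subbundle of the trivial $\C^2$-bundle over $S^1 \times \R_+$ whose fibre at $(\alpha,\rho_k)$ is the orthogonal complement $\Pi_{(\rho_k,\alpha)}^{\perp}$. Since $\R_+$ is contractible, this is an $SO(2)$-bundle over $S^1$, and unit sections up to homotopy form a torsor over $\pi_0(\mathrm{Map}(S^1, S^1)) = H^1(S^1;\Z) \cong \Z$. Concretely, a generator of this $\Z$-action rotates $w_k$ by an extra full $2\pi$-twist inside the fibre as $\alpha$ traverses $S^1$ once.

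Second, I would show that the homology class $[l_k(w_k)] = f[\sP_{1,0}] + g[\sP_{0,1}]$ on $\Lambda_k$ depends on $w_k$ as follows: $g$ is always equal to $1$, while a generator of the $\Z$-action changes $f$ by exactly $\pm 1$. The equality $g=1$ is immediate: the central circle $\rho_k = 0$ is itself parametrised by $\alpha$, which is the longitude on the ideal boundary torus $\Lambda_k$, so its image under the flow by $v_k = J\nu$ out to infinity wraps the longitudinal class $[\sP_{0,1}]$ exactly once. For the effect on $f$, the key geometric observation is that $v_k = J\nu$ restricted to the central circle agrees with $Jw_k$ by Lemma~\ref{l : first properties of 4-chain}(b); hence rotating $w_k$ by one extra full turn as $\alpha$ traverses $S^1$ rotates $v_k$ by one extra full turn in the normal plane to the central circle inside $L_k$. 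When one pushes the central circle along this modified $v_k$ out to the asymptotic torus $\Lambda_k$, the extra $2\pi$-twist becomes precisely one additional wrap around the meridional class $[\sP_{1,0}]$ (the class which is contractible in the solid torus), with a definite sign determined by the orientation conventions.

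Finally, combining these two steps: the map from homotopy classes of $w_k$ (a $\Z$-torsor) to the set of possible values of $f \in \Z$ is a $\Z$-equivariant bijection, hence there is a unique homotopy class of $w_k$ for which $f = 0$, i.e., for which $[l_k(w_k)] = [\sP_{0,1}]$. This is the claimed statement.

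The main obstacle is the second step, and in particular checking that one extra twist of $w_k$ contributes exactly $\pm[\sP_{1,0}]$ rather than some larger integer multiple. I would handle this by working in the local model near $\rho_k = 0$ provided by the explicit parametrisations in Section~\ref{ssec : Lagrangians}, writing out $\nu$, $J\nu$ and the flow to infinity in coordinates $(\rho_k,\alpha,\beta)$ on $L_k$, and tracking the winding of the image curve in $(\alpha,\beta)$-coordinates on $\Lambda_k$ as $w_k$ is varied by a standard loop in $SO(2)$. The remaining verifications (that $g = 1$ and that the ambiguity is exhausted by $\Z$) are then straightforward.
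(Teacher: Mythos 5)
Your proposal is correct and takes essentially the same approach as the paper: the paper's entire proof is the one-line observation that adding a local twist to $w_k$ changes $[l_k(w_k)]$ by $\pm[\sP_{1,0}]$, which is exactly your second step, with your first and third steps making explicit the $\Z$-torsor bookkeeping the paper leaves implicit.
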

\begin{proof}
Adding a local twist in $w_k$ changes the homotopy class of $[l_k(w_k)]$ by $\pm[P_{1,0}]$. 
\end{proof}

We define the $4$-chain of $L_k$ as $W_k=W_k(w_k)$, where $w_k$ is a vector field for which Lemma~\ref{l : vector filed for 4-chain} holds.

For the spin structure, on the toric branes $L_1, L_2, L_3$, there are on each two choices of spin structure.  The choice of spin structure will be reflected in the sign of the disks $D_k$ of Lemma~\ref{l : simple disks and annuli}.  We choose the spin structure which gives the sign `minus'. 
 
Recall that the definition of the $\sW_{\lambda, \overline{\mu}}$ basis of a torus depends on the choice of longitude and meridian of the boundary torus, with the meridian of course being already determined up to orientation. We pick the meridian $\sP_{1,0}$ and longitude $\sP_{0,1}$ adapted to the disk boundaries as above. Then by choice of $w_k$, $\sW_{\ydiagram{1}, \emptyset}$ in $L_k$ agrees with
$\partial D_k \in \Sk(L_k)$, framed by $v_k=Jw_k$ since it is framed isotopic to the longiutude $\sP_{1,0}$ with framing given by the outward vector field.

\subsection{Morsification}
We will perturb the contact form in order to Morsify the Reeb chords of action $<\frac{8\pi}{9}$ discussed above.  Consider the shift $\Lambda_k^\epsilon$ of $\Lambda_k$ by $\epsilon$ units along the Reeb flow and a neighborhood of $\Lambda_k^\epsilon$ of the form
\[
[-\delta,\delta]\times D_r T^\ast\Lambda_k^\epsilon,
\]
where $D_r$ denotes the radius $r$ disk bundle,  
where $\alpha=dz-pdq$, and where $z$ is a coordinate along the interval and $pdq$ is the canonical $1$-form on $T^\ast\Lambda_k^\epsilon$. We perturb the contact form $\alpha$ to $e^{h}\alpha$ where $h\colon S^5\to\R$ is supported in small disjoint neighborhoods of $\Lambda_k^\epsilon$ of the above form. In other words, $h=h_1+h_2+h_3$ where $h_k$ is supported in a small neighborhood of $\Lambda_k^\epsilon$. Here we take it to have the form
\[
h_k(q,p,z)= \beta_1(z)\beta_2(|p|)f_k(q), 
\] 
where $f_k\colon\Lambda_k^\epsilon\to\R$ is a Morse function, $\beta_1(z)$ is a cut off function equal to $1$ near $z=0$ and equal to $0$ for $|z|\ge \delta'$ for some small $\delta'<\delta$, and where $\beta_2(|p|)=1$ near $p=0$ and equal to $0$ for $|p|<\delta''$. Then the Reeb vector field $R_h$ of $e^h\alpha$ is given by
\[
R_h= e^{-h}(R-X_h),
\]   
where $X_h$ is the $d\alpha$-dual (Hamiltonian vector field) of the restriction of $dh$ to the contact plane.

Note then that the map from $\Lambda_k$ to $T^\ast\Lambda_{k+j}$, $j=1,2,3$, obtained by flowing time $\frac{2j\pi}{9}$ and then projecting to $T^\ast\Lambda_{k+j}$ agrees with a small time Hamiltonian flow of the sum of $j$ of the functions $f_k(q)$. In particular, Reeb chords correspond to the critical points of the sum of functions $f_k$. We will use the Reeb chords at the minimum of these Morse functions and by adjusting the Morse function, we are free to choose this point. In order to use the disks in \eqref{eq : disk j}, we take the minimum near $(1,1,1)$.  

We will use the correspondence between holomorphic curves near the Bott degenerate limit and holomorphic curves in the Bott degenerate limit with Morse flow lines attached. The curves with Bott-degenerate negative asymptotics considered here correspond, after projecting out the symplectization direction, to flow trees with a positive puncture at a minimum and two negative punctures also at minima. The holomorphic curve/Morse flow tree correspondence result needed is then a very simple instance of \cite[Theorem 1.1]{Ekholmflowtree}. (In the language of \cite{Ekholmflowtree}, the front projection of the Legendrian is smooth and there are only three local sheets, so the trees have only a single $Y_0$-vertex). 

Thus the element $\sA_k^- := \sA_{c_{k,k}}^- \in \Sk(\R \times (\Lambda_1 \sqcup \Lambda_2 \sqcup \Lambda_3), \mathrm{Reeb})$ is a sum of 5 terms, corresponding to the three disks mentioned in $(i)$ and the two triangles mentioned in $(ii)$. Note $\sA_k^-$ implicitly depends on a choice of Morse function to perturb out of the Bott situation, and also on choices of brane data (spin structures, vector fields, 4-chains) for the $\R \times \Lambda_i$.

\begin{lemma} \label{triangles opposite signs}
    For any of the $\sA^-_k$, the contributions of the two triangles 
    $\Delta(a_{k,k+1}b_{k+1,k+3})$ and $\Delta(b_{k,k+2},a_{k+2,k+3})$ from (ii)
    have opposite signs. Also, if $\alpha^{k}_{k+j}$, $j=1,2$ denote the $(a_1,a_2,a_3)$-monomials of the triangles $\Delta(a_{k,k+1}b_{k+1,k+3})$, $\Delta(b_{k,k+2},a{k+2,k+3})$ with boundary on $\Lambda_k$ and $\Lambda_{k+j}$ then 
    \begin{equation}\label{eq : a powers}
    \alpha^k_{k+1}\,\alpha^{k+1}_{k+2}\,\alpha^{k+2}_{k} \ = \ \alpha^{k}_{k+2}\,\alpha^{k+1}_{k}\,\alpha^{k+2}_{k+1}.
    \end{equation}
\end{lemma}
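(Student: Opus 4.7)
The plan is to use the explicit fiber-projection description from Lemma \ref{l : Bott curves}: both triangles in item $(ii)$ lie in the same Hopf fiber, a copy of $\C^*$ in which the three Lagrangians $\Lambda_j$ meet in rotated configurations of three rays each. The sector bounded by the two $\Lambda_k$-rays at angles $0$ and $2\pi/3$ admits an anti-holomorphic reflection across its angle bisector, $z\mapsto e^{i2\pi/3}\bar z$. This reflection interchanges the ray $e^{i2\pi/9}\R_+\subset\Lambda_{k+1}$ with the ray $e^{i4\pi/9}\R_+\subset\Lambda_{k+2}$, and hence exchanges the two triangles $\Delta(a_{k,k+1},b_{k+1,k+3})$ and $\Delta(b_{k,k+2},a_{k+2,k+3})$, preserving all Reeb chord asymptotics.

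For the sign claim, I would combine this involution with the symmetric choice of spin structures and $4$-chains from Section~\ref{framing spin choices}: the reflection carries the brane data on $\Lambda_{k+1}$ to that on $\Lambda_{k+2}$ and vice versa, while fixing the data on $\Lambda_k$. The standard SFT orientation formula then produces an overall relative sign equal to the determinant of the induced involution on the determinant line of the linearized $\bar\partial$-operator. Being anti-holomorphic, the involution reverses the complex orientation, giving the claimed opposite signs.

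For the monomial identity \eqref{eq : a powers}, I would exploit the cyclic $\Z/3$-rotational symmetry of the entire brane configuration, which acts by $k\mapsto k+1$ and correspondingly $a_j\mapsto a_{j+1}$. Under this symmetry, $\alpha^k_{k+1}$ maps to $\alpha^{k+1}_{k+2}$, so writing $\alpha^k_{k+1}=a_k^p\,a_{k+1}^q$ for integers $p,q$ independent of $k$, the left hand side of \eqref{eq : a powers} equals $(a_1a_2a_3)^{p+q}$. Writing $\alpha^k_{k+2}=a_k^{p'}\,a_{k+2}^{q'}$ similarly, the right hand side equals $(a_1a_2a_3)^{p'+q'}$. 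Thus the identity reduces to checking that the \emph{total} $a$-degree agrees, $p+q=p'+q'$. This total degree is the signed intersection of the triangle with the chain $W_1+W_2+W_3$. One can compute it directly from the explicit fiber description of the triangles together with the construction of the $4$-chains in Section~\ref{framing spin choices}; alternatively, since the LHS and RHS triples of triangles share all their positive and negative Reeb chord asymptotics, their difference is an absolute $2$-cycle in the symplectization, and the intersection with each $W_j$ is homotopy invariant and vanishes by the homological triviality of $2$-cycles in $\R\times S^5$.

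The main obstacle will be the careful sign bookkeeping in the SFT orientation formula: the signs for curves with several Reeb chord asymptotics depend on capping paths, spin structures along each boundary arc, and an ordering of punctures, and one must verify that the fiber reflection induces exactly a single overall sign flip, without unexpected further contributions from reordering asymptotics or from the interaction of the involution with the capping data.
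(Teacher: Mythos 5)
Your proposal diverges from the paper's proof, and both halves have genuine gaps. For the sign claim, the paper does not use a reflection at all: it considers the $1$-dimensional moduli space of disks with positive puncture at $c_{k,k}$ and \emph{three} negative punctures $a_{k,k+1}$, $a_{k+1,k+2}$, $a_{k+2,k+3}$, identifies its two boundary configurations as the two triangles in question each glued to a triangle of type (iii), observes that the type-(iii) triangles carry the same sign because they are related by the honest holomorphic $\Z_3$ symmetry of the whole configuration (branes, $4$-chains, spin structures were all chosen $\Z_3$-equivariantly), and concludes the opposite signs from the fact that the two ends of an oriented $1$-manifold count with opposite signs. Your anti-holomorphic reflection $z\mapsto e^{2\pi i/3}\bar z$ does exchange the two slit sectors in the fiber, but you have neither promoted it to a global anti-symplectic involution compatible with the brane data (spin structures, $4$-chains, capping paths at $c_{k,k}$), nor explained how such an involution acts on the SFT orientation of a \emph{rigid} moduli space; "reverses the complex orientation" is not meaningful for a $0$-dimensional (Bott-rigid) problem, and you yourself flag this as the unresolved main obstacle. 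As it stands the sign claim is not proved.

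For the monomial identity, your reduction via $\Z_3$-equivariance to the statement that $\alpha^k_{k+1}$ and $\alpha^k_{k+2}$ have equal \emph{total} $a$-degree is fine, but the justification you offer for that equality fails. The two triangles (and likewise the two cyclic triples) do share their Reeb chord asymptotics, but their boundary arcs on $\R\times\Lambda$ are different, so their difference is only a cycle relative to $\R\times\Lambda$; the intersection pairing with the $4$-chains $W_j$ (whose boundary is $2(\R\times\Lambda_j)$) is a nontrivial function on such relative classes. If it vanished for homological reasons as you claim, every pair of curves with matching asymptotics would carry the same $a$-monomial, which is false throughout the paper (e.g.\ the disks contributing $\sP_{1,0}$ and $\sP_{0,1}$ to $\sA_1$ carry different powers of $a$). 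The paper instead extracts the identity from the same degeneration used for the signs: the two boundary configurations of the connected $1$-parameter family lie in the same relative homology class, so their total monomials agree, giving $\alpha^k_{k+2}\,\beta^k=\alpha^k_{k+1}\,\beta^{k+1}$ with $\beta^k$ the monomial of $\Delta(a_{k,k+1}a_{k+1,k+2})$; multiplying over $k$ cancels the $\beta$'s and yields \eqref{eq : a powers}. You would need either this degeneration argument or an explicit computation of the $W_j$-intersections of the fiber triangles to close the gap.
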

\begin{proof}
Consider the moduli space of disks with positive puncture at $c_{k,k}$ and three negative punctures, at $a_{k,k+1}$, $a_{k+1,k+2}$, and $a_{k+2,k+3}$. The boundary of this $1$-dimensional moduli space consists of two two-level curve configurations with the following parts:
\begin{align*}
&\Delta(a_{k,k+1}b_{k+1,k+3}) \quad\text{glued at $b_{k+1,k+3}$ to}\quad \Delta(a_{k+1,k+2}a_{k+2,k+3}),\\
&\Delta(b_{k,k+2}a_{k+2,k+3}) \quad\text{glued at $b_{k,k+2}$ to}\quad \Delta(a_{k,k+1}a_{k+1,k+2}).
\end{align*}
Now, the triangles $\Delta(a_{k+1,k+2}a_{k+2,k+3})$ with positive puncture at $b_{k+1,k+3}$ and $\Delta(a_{k,k+1}a_{k+1,k+2})$ with positive puncture at $b_{k,k+2}$ are related by the $\mathbb{Z}_3$ symmetry of the vertex and hence have the same sign. Since the orientation signs at opposite ends of an oriented $1$-manifold are opposite, it follows that the signs of $\Delta(a_{k,k+1}b_{k+1,k+3})$ and $\Delta(b_{k,k+2},a_{k+2,k+3})$ are opposite.

We next consider $a_k$-powers. Let $\alpha^{k}_{k+j}$, $j=1,2$ denote the $(a_1,a_2,a_3)$-monomials of the triangles $\Delta(a_{k,k+1}b_{k+1,k+3})$, $\Delta(b_{k,k+2},a{k+2,k+3})$ with boundary on $\Lambda_k$ and $\Lambda_{k+j}$ and, similarly, let $\beta^{k}$ be the monomial of the triangle $\Delta(a_{k,k+1}a_{k+1,k+2})$. Since the the $(a_1,a_2,a_3)$-monomial of a holomorphic building is the product of the monomials of its parts we find the relations
\[
\alpha^k_{k+2}\beta^k=\alpha^k_{k+1}\beta^{k+1},
\]
which implies \eqref{eq : a powers}. The lemma follows. 
\end{proof}

\begin{proposition} \label{cancelling triangles}
    Fix some choice $p_j$ with $j = 1,2,3$,
    and use it as a capping path for all $c_{k,k}$. Then for an appropriate choice of Morse function, there is a choice of signs and framing monomomials $\ast$ so that all contributions of triangles to 
    $$\sA_{j} := \pm \ast \sA_1^- \pm \ast\,\, \sA_2^- \pm \ast\,\, \sA_3^-$$
    will cancel. 
\end{proposition}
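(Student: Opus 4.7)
The six triangles contributing to $\sA_1^- + \sA_2^- + \sA_3^-$ admit a natural pairing. Reducing indices mod $3$, the triangle $T_{k,A} := \Delta(a_{k,k+1}, b_{k+1,k+3})$ appearing in $\sA_k^-$ shares its pair of negative Reeb-chord asymptotics with $T_{k+1,B} := \Delta(b_{k+1,k}, a_{k,k+1})$ appearing in $\sA_{k+1}^-$, and both of these triangles have their boundaries on the same pair of branes $\Lambda_k \cup \Lambda_{k+1}$. The plan is to show that, with a suitable choice of overall signs and framing prefactors $\ast_k$ in the combination $\sA_j = \sum_k \pm_k \ast_k \,\sA_k^-$, each of the three pairs $(T_{k,A}, T_{k+1,B})$ cancels in $\Sk(\mathbb{R}\times \Lambda, \mathrm{Reeb})$.

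For the signs: Lemma \ref{triangles opposite signs} gives that $T_{k,A}$ and $T_{k,B}$ come with opposite signs within $\sA_k^-$, and the cyclic $\mathbb{Z}_3$ symmetry rotating $k \to k+1$ forces $T_{1,A}, T_{2,A}, T_{3,A}$ to come with a common sign under a symmetric normalization (and similarly for the $T_{k,B}$'s). Taking the same overall sign in front of each $\sA_k^-$ in $\sA_j$ therefore makes $T_{k,A}$ and $T_{k+1,B}$ contribute with opposite signs automatically.

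For the framing monomials: by Lemma \ref{triangles opposite signs}, $T_{k,A}$ carries the $(a_1,a_2,a_3)$-monomial $\alpha^k_{k+1}$ and $T_{k+1,B}$ carries $\alpha^{k+1}_k$. Cancellation within the pair requires $\ast_{k+1}/\ast_k = \alpha^k_{k+1}/\alpha^{k+1}_k$ for each $k \in \{1,2,3\}$. The product of these three ratios around the cycle must equal $1$, which reads $\alpha^1_2\alpha^2_3\alpha^3_1 = \alpha^2_1\alpha^3_2\alpha^1_3$, exactly the identity \eqref{eq : a powers}. So a consistent choice of prefactors $\ast_1, \ast_2, \ast_3$ exists (with one overall degree of freedom).

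The main difficulty is the geometric step of showing that $T_{k,A}$, after capping its positive puncture $c_{k,k}$ with $p_j$ on $\Lambda_k$, represents the same class in $\Sk(\mathbb{R}\times \Lambda, \mathrm{Reeb})$ as $T_{k+1,B}$ does after capping $c_{k+1,k+1}$ with $p_j$ on $\Lambda_{k+1}$. The two curves are genuinely distinct as holomorphic maps: by Lemma \ref{l : Bott curves} they fill different sectors of the Hopf fiber with differently placed slits. I would argue matching by applying the Morse-tree correspondence of \cite{Ekholmflowtree} to normalize all Bott-degenerate endpoints to a common Morse minimum on each brane (using the freedom in choosing the Morse function to place the minima at $(1,1,1)$), and then verifying, from the explicit sector-with-slit description, that the resulting boundary arcs on $\Lambda_k$ and $\Lambda_{k+1}$, framed by the vector fields $v_k, v_{k+1}$ from Section \ref{framing spin choices}, are isotopic as framed tangles up to a local move near the capped chord. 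The careful bookkeeping of framings, $4$-chain linking contributions (using Lemma \ref{l : first properties of 4-chain}), and spin-structure signs is the chief technical obstacle; once these local models are in place, the skein identification reduces to an elementary combinatorial check, and the three ingredients above assemble to give the proposition.
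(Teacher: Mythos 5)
Your skeleton coincides with the paper's: the same pairing of the six triangles across different $\sA_k^-$ by matching negative punctures, the same use of Lemma \ref{triangles opposite signs} (opposite signs within each $\sA_k^-$ plus the $\mathbb{Z}_3$ symmetry) to get the signs to work out, and the same observation that the cyclic consistency condition on the prefactors $\ast_k$ is exactly the identity \eqref{eq : a powers}. That algebraic part of your argument is correct and is essentially verbatim what the paper does (the paper phrases it as: choose prefactors to cancel one pair between $\sA_1^-$ and $\sA_2^-$, then \eqref{eq : a powers} guarantees the remaining two pairs can be cancelled by a suitable cap on $L_3$; your ``fix all three ratios and check the product is $1$'' is the same computation).

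The gap is that the step you defer -- showing that the two capped triangles in each pair actually represent the \emph{same} element of $\Sk(\R\times\Lambda;\mathrm{Reeb})$, not merely elements with the same punctures, signs and $a$-monomials -- is the entire geometric content of the proposition, and ``an elementary combinatorial check'' understates it. The two members of a pair are genuinely different sectors-with-slits in the Hopf fiber, their positive punctures sit at \emph{different} Reeb chords ($c_{k,k}$ versus $c_{k+1,k+1}$, capped on different tori), and their boundaries only agree after one makes coordinated choices of (i) the locations of the Morsified minima of the mixed chords $a_{k,k+1}$, $b_{k,k+2}$ relative to the evaluation maps at the negative punctures, (ii) the Morse flow lines $\beta,\gamma,\delta$ attached at the evaluation points that do not land near a minimum, and (iii) the capping paths $\alpha_k$, which must be chosen compatibly on all three tori. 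The paper's proof consists almost entirely of exhibiting these choices explicitly (Figures \ref{fig:c_kk Reeb chords}--\ref{fig:cappingpaths}) and then observing that the capped boundaries coincide as framed tangles; it also notes that the three relations $\sA_1,\sA_2,\sA_3$ arise from three different choices of the flow line $\beta=\beta_j$ with the cap adjusted accordingly (Figure \ref{fig:differentcaps}). Without producing these explicit choices (or some argument that they must exist), your proposal establishes only that cancellation is not obstructed by signs or framing monomials, which is necessary but not sufficient for the proposition as stated.
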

\begin{proof}
    Note that in the above sum, there are six triangles, which by Lemma \ref{triangles opposite signs} may be re-organized into pairs with matching negative punctures.  Note the members of the pair come from different $\sA_j^-$.  Thus it is at least plausible that for the right choice of capping path for the positive puncture, and Morse function perturbation (which influences the triangle boundary), the paths traced by the boundary of these two triangles will match.  
    
    Suppose this has been accomplished.  Then some appropriate $\sA_1 \pm \ast \sA_2$ will cancel the common triangle.  It follows from Lemma \ref{triangles opposite signs} that the two remaining triangles in this sum still contribute with opposite signs.  But these are the same triangles which appear in $\sA_3$, also with opposite signs. Further, by \eqref{eq : a powers} in Lemma \ref{triangles opposite signs}, the ratios of $a_k$-powers of the capped disks of the triangles in $\sA_3$ and the $a_k$-powers of related disks in $\sA_1$ and $\sA_2$, respectively, are equal, which means that there could exist a suitable capping disk on $L_3$ that make all triangles cancel in pairs. We next show there in fact is such a cap.

    Figure \ref{fig:c_kk Reeb chords} shows the location of the minimal Reeb chords $c_{kk}$. We pick the location on $L_1$ and then let the corresponing Reeb chords on $L_2$ and $L_3$ be obtianed by translation $\frac{2\pi}{9}$ and $\frac{4\pi}{9}$ units along the Reeb flow. Note also that this determines the evaluation maps at the negative punctures into the Reeb chord manifolds of the shorter mixed Reeb chords. 

We pick the location of the start point of the short mixed Reeb chord $a_{k,k+1}$ (on $\Lambda_k$) to lie close to the evaluation map, i.e., $c_{kk}^-$, and the location endpoint of the longer mixed Reeb chord $b_{k+1,k}$ (again on $\Lambda_k$) to lie close to the evaluation map, i.e., $c_{kk}^+$, see Figure \ref{fig:triangle-eval}. 

With these choices the images of evaluation maps at negative punctures of triangles with positive puncture at $c_{kk}$ lie in a small neighborhood of the Reeb chords at their negative ends except for the evaluation at the puncture $b_{13}$ in the triangle with positive puncture at $c_{11}$, the evaluation at $a_{12}$ in the triangle with positive puncture at $c_{22}$, and the evaluation at $a_{23}$ in the triangle with positive puncture at $c_{33}$. We pick perturbing Morse functions so that flow lines from evaluations in small neighborhoods of Reeb chord minima stay inside these small neighborhoods, see Figures \ref{fig:c_11 triangles}, \ref{fig:c_22 triangles}, and \ref{fig:c_33 triangles}. 

For the evaluation maps not in a neighborhood we pick flow lines as follows: $\beta$ for $b_{13}$ in Figure \ref{fig:c_11 triangles} and its translates, $\gamma$ for $a_{12}$ in Figure \ref{fig:c_22 triangles} and $\delta$ for $a_{23}$ in Figure \ref{fig:c_33 triangles}. (We will discuss other possible choices at the end of the proof.) 

With triangles as described in Figures \ref{fig:c_11 triangles}, \ref{fig:c_22 triangles}, and \ref{fig:c_33 triangles} it is straightforward to check that if we choose capping disks with boundaries $\alpha_k$ as shown in Figure \ref{fig:cappingpaths}, then the pairs of disks with the same negative asymptotics cancel. Relation $\sA_1$ follows.

In order to derive $\sA_2$ and $\sA_3$ we note that in the argument for $\sA_1$ just given, the Morse flow $\beta$ in the Bott manifold of $a_{12}$ was chosen, then other Morse flows $\gamma$ and $\delta$ were determined by Reeb translation and the boundaries of the capping disks were taken as nearby paths with the opposite orientation. To derive $\sA_2$ and $\sA_3$, we simply repeat the argument replacing $\beta=\beta_1$ in Figure \ref{fig:differentcaps} by the flow lines $\beta_2$ for $\sA_2$ and $\beta_3$ for $\sA_3$ in Figure \ref{fig:differentcaps}. The proposition follows.   
\end{proof}

\begin{remark}
In the proof of Proposition \ref{cancelling triangles} we pick the Morse flow line $\beta$ and the corresponding capping path $\alpha_j$. We use three different choices $\beta=\beta_j$, $j=1,2,3$, in order to get an complete system of operator equations. Note however that there are other possible choices, in particular taking the Morse flow and capping path as primitive curves in the tori we get one operator equation for each relatively prime pair of integers $(i,j)$.  
\end{remark}

\begin{figure}
    \centering
    \includegraphics[width=0.8\linewidth]{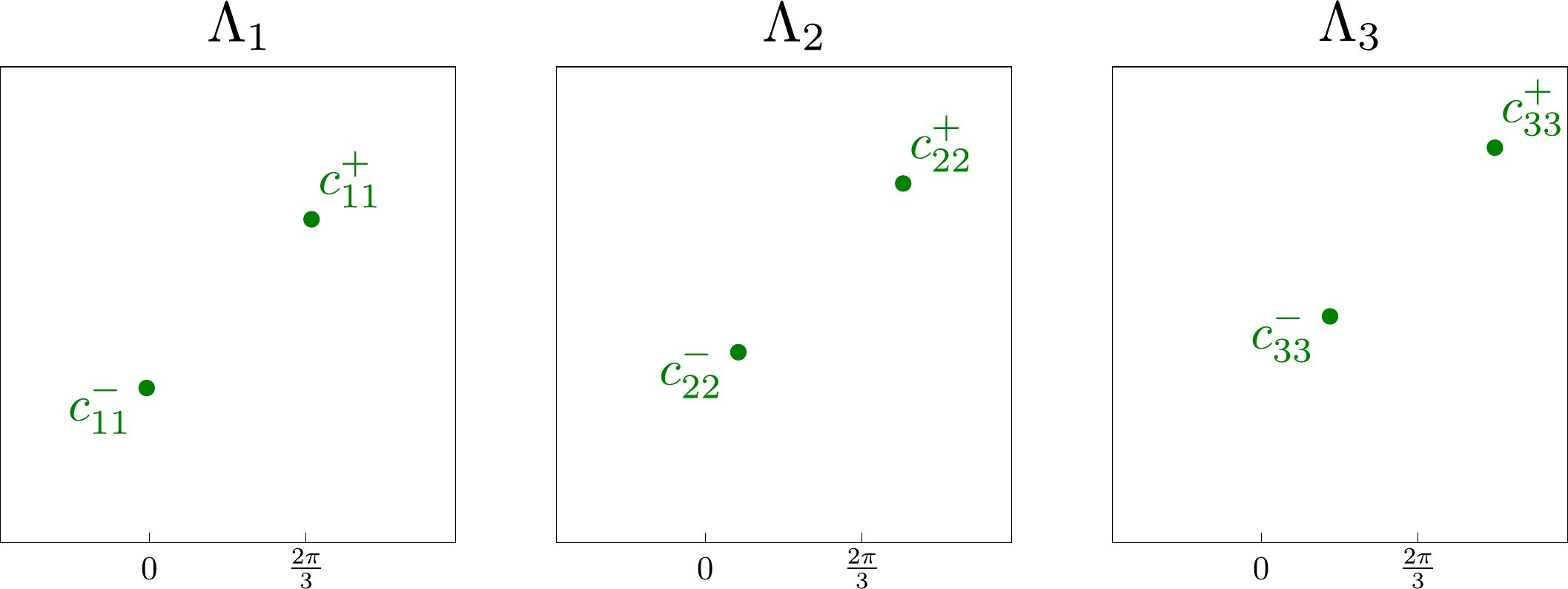}
    \caption{The location of the minimal action Reeb chords $c_{kk}$ which is also the image of the evaluation maps at the positive puncture of the triangles with positive puncture at $c_{kk}$ and negative punctures at $b_{k,k+2}$ and $a_{k+2,k+3}$ or at $a_{k,k+1}$ and $b_{k+1,k+3}$.}
    \label{fig:c_kk Reeb chords}
\end{figure}

\begin{figure}
    \centering
    \includegraphics[width=0.8\linewidth]{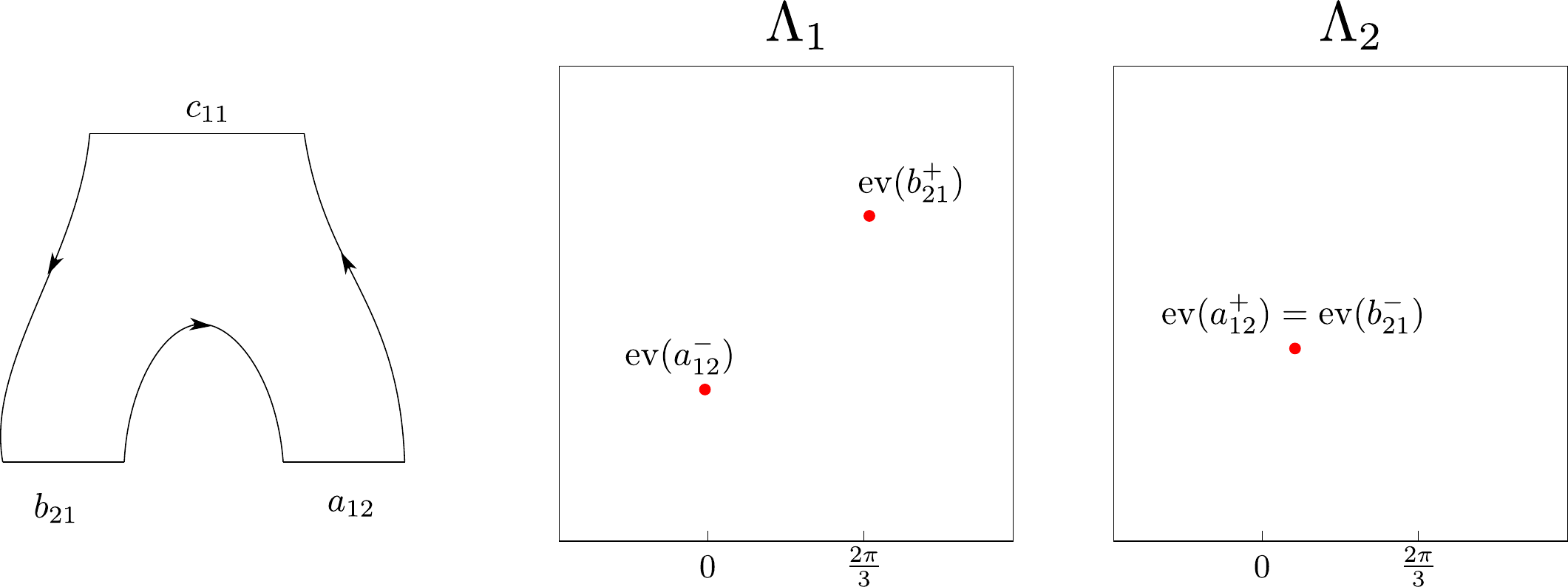}
    \caption{The evaluation map $\mathrm{ev}$ of the Reeb chords at the negative end in the triangle with positive puncture $c_{11}$ and negative punctures $a_{12}$ and $b_{21}$. Since the triangle lies in a fiber, the negative end point of $a_{12}$ agrees with the negative endpoint of $c_{11}$ and the positive endpoint of $b_{21}$ with the positive endpoint of $c_{11}$. The locations of the positive endpoint of $a_{12}$ and the negative endpoint of $b_{21}$ agree and are determined by following the respective Reeb chords. Evaluation maps of other triangles are determined in a similar way. }
    \label{fig:triangle-eval}
\end{figure}

\begin{figure}
    \centering
    \includegraphics[width=.8\linewidth]{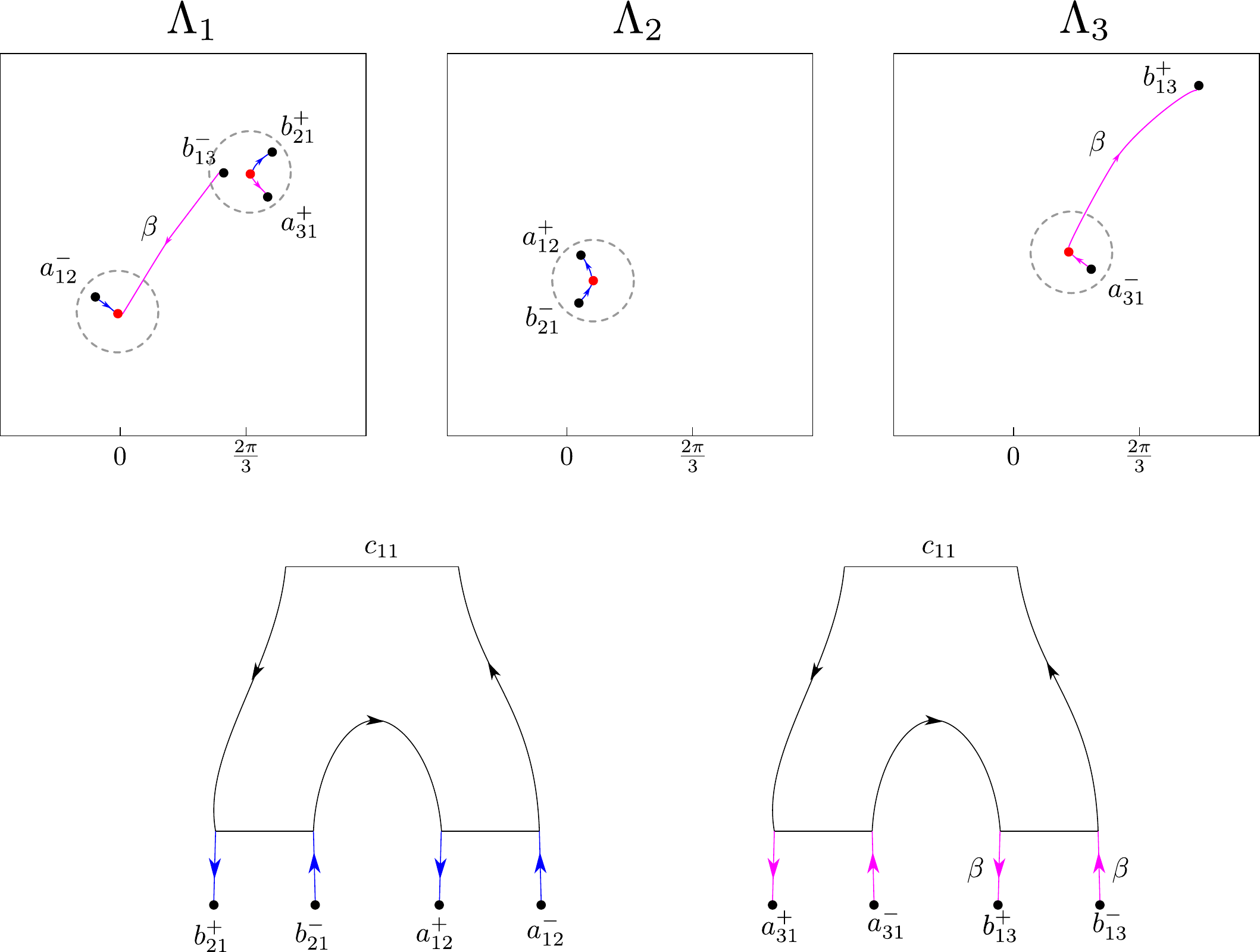}
    \caption{Triangles with positive puncture at $c_{11}$. Red dots indicate evaluation maps, see Figure \ref{fig:triangle-eval}, dashed circles indicate the neighborhoods of the evaluation maps, black dots the location of minimal chords $a_{12}$, $b_{13}$, $a_{31}$, and $b_{21}$, and curve segments indicate Morse flows. The actual punctured holomorphic disks are arbitrarily well approximated by the original disks with Morse flow segments attached. }
    \label{fig:c_11 triangles}
\end{figure}
\begin{figure}
    \centering
    \includegraphics[width=.8\linewidth]{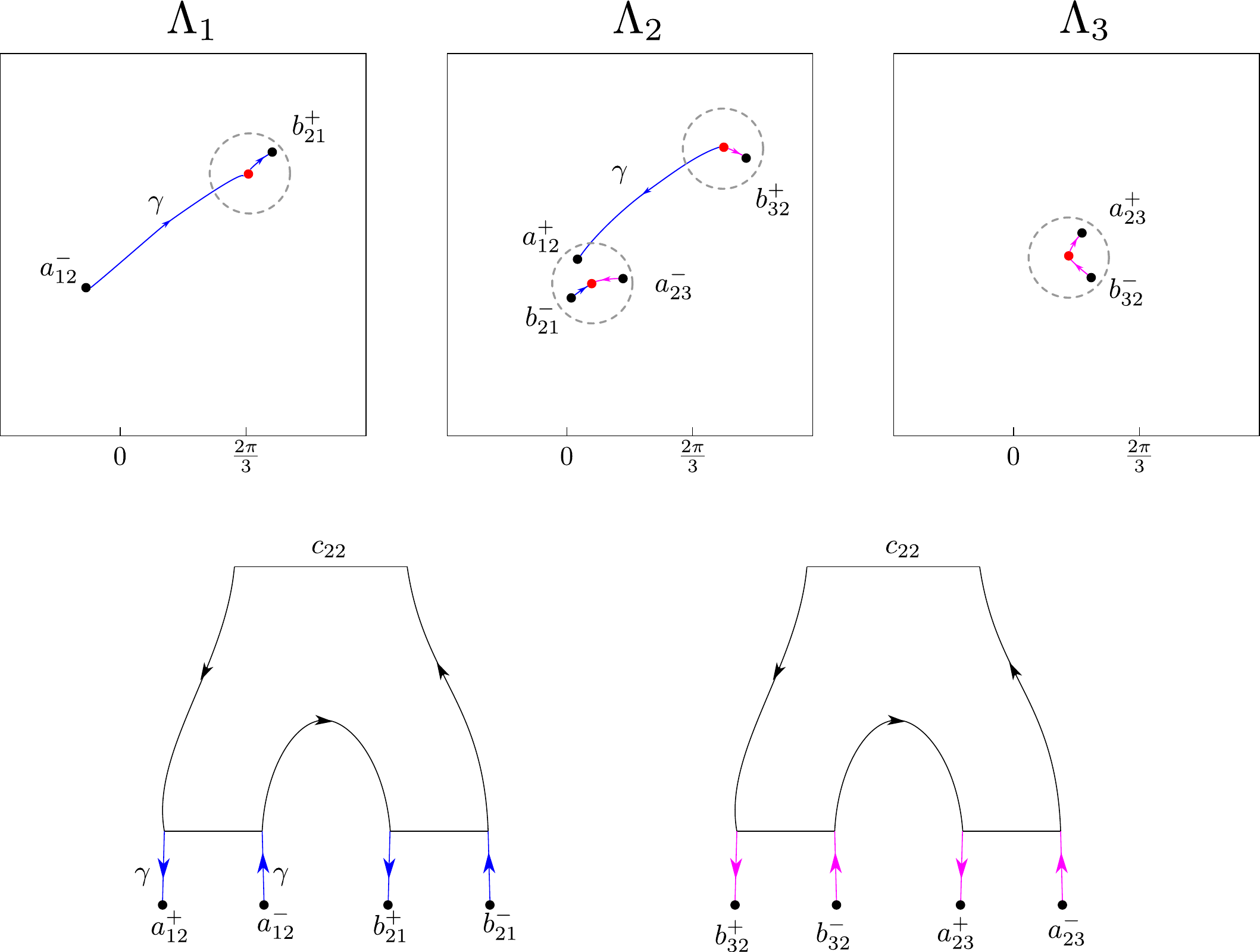}
    \caption{Triangles with positive puncture at $c_{22}$. Notation as in Figure \ref{fig:c_11 triangles}.}
    \label{fig:c_22 triangles}
\end{figure}

\begin{figure}
    \centering
    \includegraphics[width=.8\linewidth]{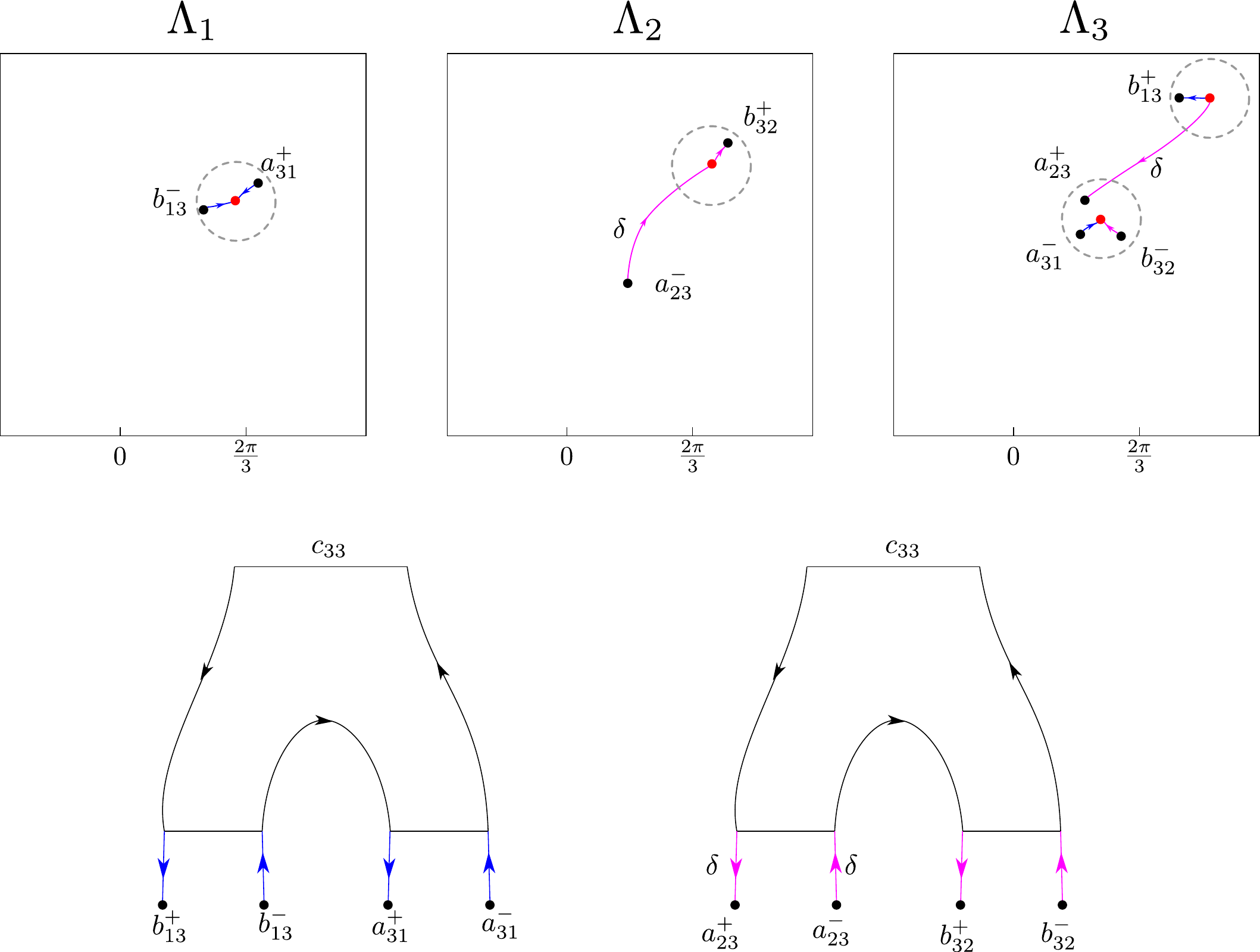}
    \caption{Triangles with postive puncture at $c_{33}$. Notation as in Figure \ref{fig:c_11 triangles}.}
    \label{fig:c_33 triangles}
\end{figure}

\begin{figure}
    \centering
    \includegraphics[width=.8\linewidth]{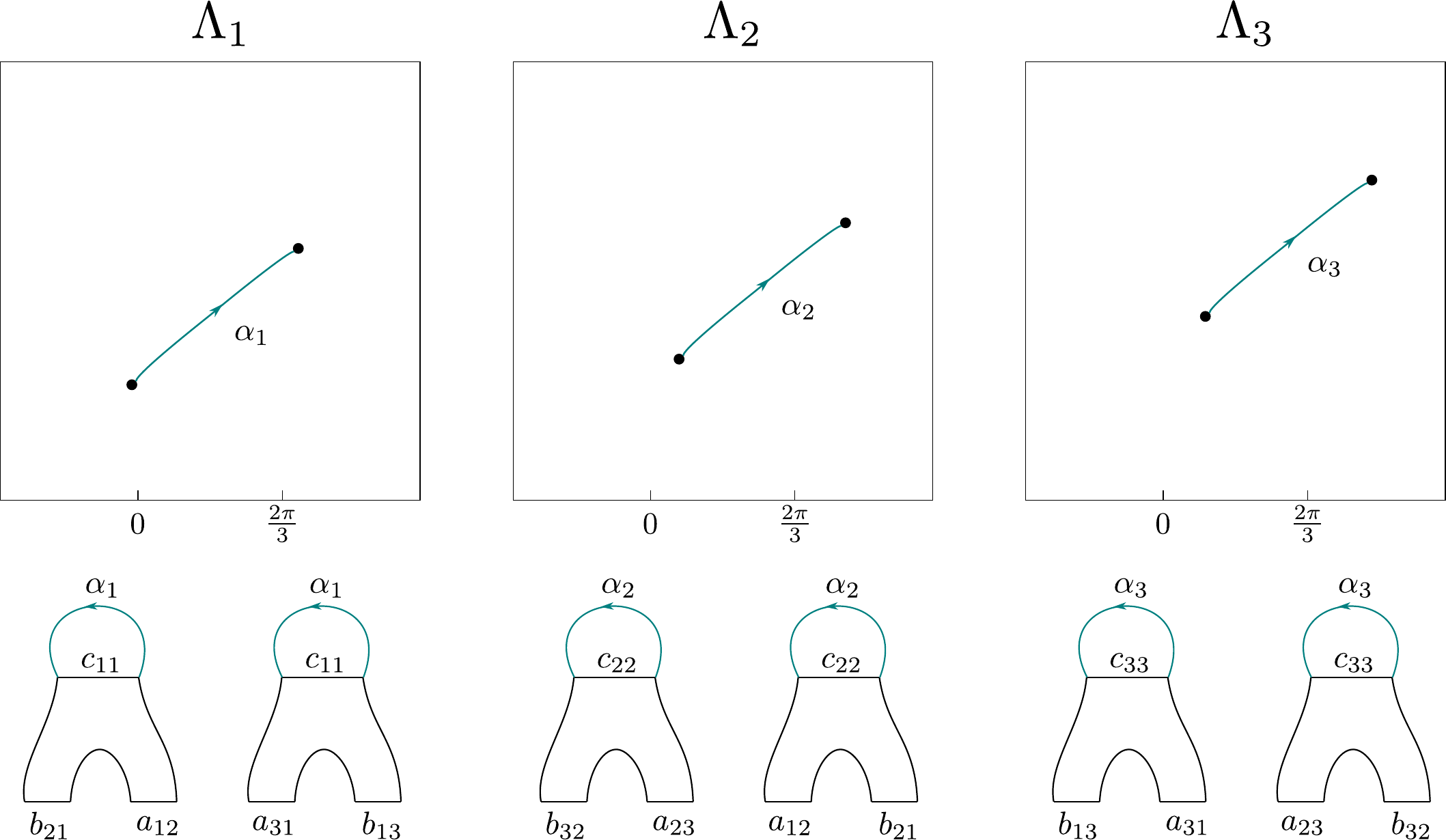}
    \caption{Capping disks for the positive punctures for cancellation of triangles.}
    \label{fig:cappingpaths}
\end{figure}

\begin{figure}
    \centering
    \includegraphics[width=0.35\linewidth]{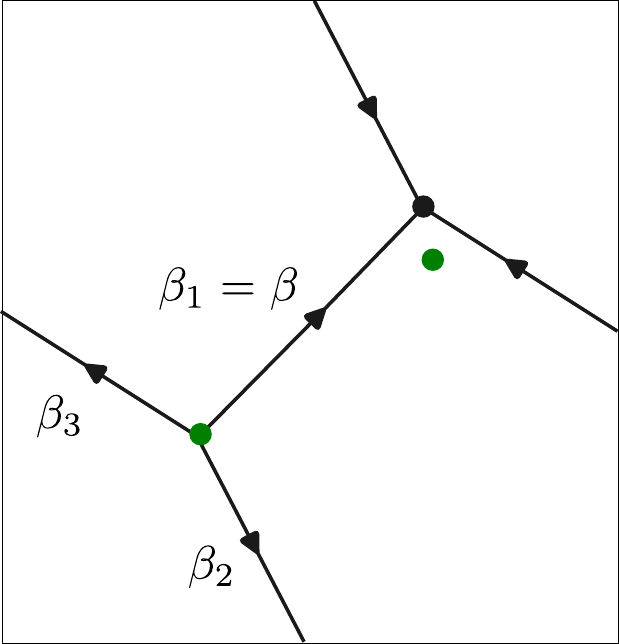}
    \caption{Different choices of Morse flow lines in Bott manifolds}
    \label{fig:differentcaps}
\end{figure}

\clearpage

\subsection{Proof of Theorem \ref{count at infinity intro}}

It follows from the discussion in Section \ref{strategy} that the operators $\sA_j$ as defined in Proposition \ref{cancelling triangles} satisfy $\sA_j \sZ_{\C^3, L_1, L_2, L_3} = 0$.  Thus it remains only to show that these operators are given by the explicit formulas of the introduction.  We will treat $\sA_1$; the remainder follow either similarly or by symmetry. 

We now impose the spin structures and framing choices of Section \ref{framing spin choices}; in particular, we choose longitudes of the $L_i$ as specified there.  
Then it follows from Proposition \ref{cancelling triangles} that 
   $$
    \sA_1 = \ast \sP_{0,0}^{(1)}  + \ast \sP_{1,0}^{(1)} + \ast \sP_{0,1}^{(1)}
    + \ast \sP_{-1,0}^{(2)}  + \ast \sP_{0,0}^{(2)} 
    + \ast \sP_{-1,1}^{(2)}
    + \ast \sP_{0,-1}^{(3)}  
    + \ast \sP_{1,-1}^{(3)} + \ast 
    \sP_{0,0}^{(3)}
    $$
where $\ast$ are signed monomials in the framing variables $a_1, a_2, a_3$. 

Let us observe that $\sZ_{\C^3, L_1, L_2, L_3}$, when expanded in the basis $\sW_{\lambda_1, \overline{\mu}_1} \otimes \sW_{\lambda_2, \overline{\mu}_2} \otimes \sW_{\lambda_3, \overline{\mu}_3}$, has only nonzero coefficients when all $\mu_i$ are empty.  Indeed, this is because any curve counted is a small perturbation of a map whose non-contracted components are multiple covers of disks in the coordinate axes.  In particular, the boundaries wind positively around the longitude of the $L_i$; this implies the vanishing of the $\mu_i$.  

Now writing $z_{\lambda_1, \lambda_2, \lambda_3}$ for the coefficient of 
$\sW_{\lambda_1, \emptyset} \otimes \sW_{\lambda_2, \emptyset} \otimes \sW_{\lambda_3, \emptyset}$,
we claim that we have the following leading terms: 
$$
    z_{\emptyset, \emptyset, \emptyset}  = 1, \qquad \qquad 
     z_{\emptyset, \emptyset, \ydiagram{1}}  =  - (q^{1/2} - q^{-1/2})^{-1}, 
    \qquad \qquad
    z_{ \emptyset,  \ydiagram{1}, \ydiagram{1}} =
    z_{\ydiagram{1}, \emptyset, \ydiagram{1}}  = 
      1 + (q^{1/2} - q^{-1/2})^{-2}.
$$
Indeed,  $z_{\emptyset, \emptyset, \emptyset}$ counts the moduli of maps from the empty curve which is a point with the positive orientation. By Lemma \ref{l : simple disks and annuli} and choice of $4$-chains, see Lemma \ref{l : vector filed for 4-chain}, $z_{\emptyset, \emptyset, \ydiagram{1}}$ is the count of a single embedded disk and we chose the spin structure to ensure the negative sign (by definition, in the \cite{SoB} formalism, a disk is counted by $(q^{1/2} - q^{-1/2})^{-1}$).  Finally, again by Lemma~\ref{l : simple disks and annuli}, $z_{ \emptyset,  \ydiagram{1}, \ydiagram{1}}$ has two contributions: the map from a disjoint union of two disks, which gives $(q^{1/2} - q^{-1/2})^{-2}$, and a map from the annulus  from gluing them, which gives $1$. 

Let us write $a_{i,j}^{(k)}$ for the coefficient of $\sP_{i,j}^{(k)}$, and rescale so that $a_{0, -1}^{(3)} = -1$.  We claim that then the sign in front of $a_{i,j}$ is always $(-1)^{i+j}$.  Indeed, geometrically, aside from capping path choices, global symmetries permute all nine disks.  It follows that any two disks have the same sign if and only if the restriction of the spin structure of the Lagrangians to their boundaries is the same.  The choice of spin structure we have made is such that the longitude and meridian of each torus carry the bounding spin structure.  This gives the sign $(-1)^{i+j}$. 

Now observe that unless $a_{1, -1}^{(3)} =  a_3$, 
$\sA_1 \cdot \sZ_{\C^3, L_1, L_2, L_3}$ would have a nonvanishing coefficient of $\sW_{\emptyset, \emptyset} \otimes \sW_{\emptyset, \emptyset} \otimes \sW_{\emptyset, \overline{ \ydiagram{1}}}$.
So, $a_{1, -1}^{(3)} =  a_3$. 

Then the coefficient of 
$\sW_{\emptyset, \emptyset} \otimes \sW_{\emptyset, \emptyset} \otimes \sW_{\emptyset, \emptyset}$
in $\sA_1\cdot \sZ_{\C^3, L_1, L_2, L_3}$ is
    $$0=(a_{0,0}^{(1)} + a_{1,0}^{(1)}) \bigcirc^{(1)}  + (a_{0,0}^{(2)} + a_{-1, 0}^{(2)})\bigcirc^{(2)} 
    + (a_{0,0}^{(3)} - a_3)\bigcirc^{(3)}.$$
Since $\sA_1\cdot \sZ_{\C^3, L_1, L_2, L_3}= 0$, we may subtract the above expression (for $0$) from $\sA_1$ without changing it.  
Thus:  
    $$
    \sA_1 =  a_{1,0}^{(1)}(\sP_{1,0}^{(1)} - \sP_{0,0}^{(1)}) + a_{0,1}^{(1)} \sP_{0,1}^{(1)}
    + a_{-1,0}^{(2)}(\sP_{-1,0}^{(2)} - \sP_{0,0}^{(2)}) + a_{-1,1}^{(2)}  \sP_{-1,1}^{(2)}
    - \sP_{0,-1}^{(3)} + a_3 \sP_{1,-1}^{(3)} + a_3 
    \sP_{0,0}^{(3)}
    $$
We compute that the coefficient
of $\sW_{\ydiagram{1}, \emptyset}
\otimes \sW_{\emptyset, \emptyset}
\otimes \sW_{\emptyset, \emptyset}$
in $\sA_1\cdot \sZ_{\C^3, L_1, L_2, L_3}$ is: 
\begin{equation} \label{solve for 1 terms}
0 = a_{0,1}^{(1)} - a_{1,0}^{(1)} a_1 + a_3^2 -1. 
\end{equation}
Finally, we use the fact that the $a_{i,j}^{(k)}$ are monomials.  Then from \eqref{solve for 1 terms}, we see
$\{a_{0,1}^{(1)}, - a_{1,0}^{(1)} a_1\} = \{1, -a_3^2\}$.  The condition on signs gives $a_{1,0}^{(1)} = - a_1^{-1}$, $a_{0,1}^{(1)} = - a_3^2$.  

Similarly from the coefficient of $\sW_{\emptyset, \emptyset}
\otimes \sW_{\ydiagram{1}, \emptyset}
\otimes  \sW_{\emptyset, \emptyset}$, namely: 
\begin{equation} \label{solve for 2 terms}
0 = a_2^{-1}(a_{-1,0}^{(2)} + a_{-1,1}^{(2)}) + a_3^2 - 1.
\end{equation}
We find $a_{-1,0}^{(2)} = - a_2 a_3^2$ and 
$a_{-1, 1}^{(2)} = a_2$.
This recovers the formula in the introduction and thus completes the proof of Theorem \ref{count at infinity intro}.\qed

\appendix
\addtocontents{toc}{\protect\setcounter{tocdepth}{1}}

\section{$U(1)$ specialization and augmentation variety}
In this section we record the specializations of formulas for the skein operator equations and partition functions to the $U(1)$ skein, and then their dequantizations.  In general, the $U(1)$ skein is a further quotient, splittng the first and second skein relations into two two-term relations, of the specialization of the HOMFLYPT skein at $a=q^{1/2}$.  

For $T^2 \times I$, the $U(1)$ skein is the quantum torus with relation $\hat y \hat x = q \hat x \hat y$.
The action of $\Sk(T^2 \times I)$ on $\Sk(\R^2 \times S^1)$ specializes to the polynomial representation of the quantum torus.  Explicitly, we have:  
\be
	\sP^{(i)}_{1,0} \mapsto \hat y_i,
	\quad
	\sP^{(i)}_{0,1} \mapsto \hat x_i,
	\quad
	a_i\mapsto q^{1/2},
	\quad
	\sW_{\lambda}^{(i)}\mapsto s_{\lambda}(x_i).
\ee
Note that $x_i$ is a single variable, so $s_\lambda(x_i) = 0$ if $\lambda$ has more than one row.
We will use the following notation for $q$-Pochhammers 
\be
	(x;q)_n := \prod_{s=0}^{n-1}(1-xq^s)\,.
\ee

The $U(1)$ specialization of the partition function of Corollary \ref{TV counts intro} is: 
\be\label{eq:vertex-abelian}
\begin{split}
	Z	
	& = \sum_{k_1, k_2, k_3\geq 0} \mathcal{T}_{(k_1),(k_2),(k_3)} x_1^{k_1} x_2^{k_2} x_3^{k_3} \\
	& = 
	\sum_{n_1, n_2, n_3}
	\frac{(-q^{1/2})^{n_1^2} x_1^{n_1}}{(q;q)_{n_1}}
	\frac{(-q^{1/2})^{n_2^2} x_2^{n_2}}{(q;q)_{n_2}}
	\frac{(-q^{1/2})^{n_3^2} x_3^{n_3}}{(q;q)_{n_3}}
	\\
	& 
	\times \Big[
	\sum_{m_{12}, m_{23}, m_{31}=0}^{1}
	(x_1 x_2)^{m_{12}}
	(x_2 x_3)^{m_{23}}
	(x_3 x_1)^{m_{31}}
	q^{n_1 m_{12} + n_2 m_{23} + n_3 m_{31} + m_{12}m_{23}m_{31}} 
	\\
	& \ \ +
	(q^{1/2}-q^{-1/2}) \, q^{n_1+n_2+n_3}\, x_1 x_2 x_3
	\Big]
\end{split}
\ee
The specialization of Theorem \ref{count at infinity intro} is $\hat A_i \cdot Z = 0$ for $i=1,2,3$, where
\be
\begin{split}
	&
	\hat A_1 = 
	(1-\hat y_1 - q^{3/2} \hat x_1) 
	- q^2 \hat y_2^{-1} (1-\hat y_2 - q^{-1/2} \hat x_2) 
	- q^{1/2} \hat x_3^{-1} (1 - \hat y_3 - q^{1/2}\hat x_3),
	\\
	&
	\hat A_2 = 
	(1-\hat y_2 - q^{3/2} \hat x_2) 
	- q^2 \hat y_3^{-1} (1-\hat y_3 - q^{-1/2} \hat x_3) 
	- q^{1/2} \hat x_1^{-1} (1 - \hat y_1 - q^{1/2}\hat x_1),
	\\
	&
	\hat A_3 = 
	(1-\hat y_3 - q^{3/2} \hat x_3) 
	- q^2 \hat y_1^{-1} (1-\hat y_1 - q^{-1/2} \hat x_1) 
	- q^{1/2} \hat x_2^{-1} (1 - \hat y_2 - q^{1/2}\hat x_2).
	\\
\end{split}
\ee

To dequantize, set $q=1$. This recovers the augmentation variety of the Legendrian DGA of $\Lambda_1 \sqcup \Lambda_2 \sqcup \Lambda_3$ (which can also be determined by a dequantized version of the arguments in Section \ref{sec : operator equation}):
\be
\begin{split}
	&
	A_1 = 
	(1- y_1 -  x_1) 
	-  y_2^{-1} (1- y_2 -  x_2) 
	-  x_3^{-1} (1 -  y_3 -  x_3)
	\\
	&
	A_2 = 
	(1- y_2 -  x_2) 
	-  y_3^{-1} (1- y_3 -  x_3) 
	-  x_1^{-1} (1 -  y_1 -  x_1)
	\\
	&
	A_3 = 
	(1- y_3 -  x_3) 
	-  y_1^{-1} (1- y_1 -  x_1) 
	-  x_2^{-1} (1 - y_2 -  x_2)
	\\
\end{split}
\ee

We recall  that the augmentation variety parametrizes objects in the Fukaya category of $\C^3$ stopped at $\Lambda_1 \sqcup \Lambda_2 \sqcup \Lambda_3$ with $\mathrm{Hom}$-spaces to the linking disks of the $\Lambda_i$ are all one dimensional in degree zero.  Indeed, since $\C^3$ is subcritical, by \cite{GPS2} the linking disks generate; by \cite{Ekholm-Lekili}, the endomorphism algebra of the linking disks is equivalent to the Legendrian DGA. It is in this sense that the $\sA_i$ determine a skein-valued quantization of the mirror.

\section{Other fillings}
The Legendrian $\Lambda_1 \sqcup \Lambda_2 \sqcup \Lambda_3$ admits, up to cyclic permutation,  four types of fillings by toric branes denoted Filling 1, 2, 3, and 4 in Figure \ref{fig:fillings}. In the main body of the article we studied the most interesting of these, namely Filling 1. 

For the other fillings, note that the argument used to derive the operator equation in Proposition~\ref{cancelling triangles}
does neither depend on the choice of filling, nor
on the choice of 4-chain.  (Or in the language of~\cite{Ekholm:2024ceb}, the argument could have been made in the `worldsheet skein' of the symplectization of the contact boundary.)  However, when we write the equation for other fillings, the expression will be different.  There are two reasons for this.  The first is that we choose the basis of each torus $H_1(\Lambda_i)$ to ensure 
that $\sP_{1,0}$ always corresponds to the contractible cycle and $\sP_{0,1}$ to the framed generator of homology with framing induced by the vector field described in the main text.  That is, it depends on the filling.  

The effect of this can be summarized as follows: ignoring all powers of $a$, when moving brane $L_\alpha$ counter-clockwise by one step among the three legs shown in Figure \ref{fig:fillings}, we transform the operator equation by: 
\be\label{eq:ST}
	\sP_{i,j}^{(\alpha)} \mapsto (-1)^i \, \sP^{(\alpha)}_{-i-j,i}. 
\ee
The element of $SL(2,\mathbb{Z})$ 
which takes $(i, j) \mapsto (-i-j, -i)$
is a cube root of the identity, and sometimes denoted $T^{-1} S$ in the literature. 
The $a$ powers in each case in principle depend on the 4-chain intersections, and in practice can be determined by a similar argument as we used for Filling 1.  

In all cases it is possible to solve the resulting recursion uniquely given an initial condition (below called `ansatz') which can in turn be easily established from the geometry of the setup. The arguments are simpler than for Filling 1; in this appendix, we content ourselves with recording without proof the resulting formulas. 

Let us introduce the following notation for the skein-valued partition functions of the disk, the (framed) anti-annulus and the twisted anti-annulus
\be
\begin{split}
	\sPsi^{(\alpha)}_{\mathrm{di}}
	& := 
	\sum_\lambda \left(\prod_{\ydiagram{1}\in\lambda} \frac{-q^{-c(\ydiagram{1})}}{q^{h(\ydiagram{1})}-q^{-h(\ydiagram{1})}}\right)  \sW^{(\alpha)}_{\lambda,\emptyset}
	= \sum_\lambda s_\lambda(q^{-\rho})  \sW^{(\alpha)}_{\lambda,\emptyset}\,,
	\\
	\sPsi^{(\alpha\beta)}_{{\mathrm{aa};(f_1, f_2)}} &:= \sum_{\lambda}  \ ((-1)^{|\lambda|}q^{\kappa(\lambda)})^{f_1-f_2} (-1)^{|\lambda|}
	\sW_{\lambda,\emptyset}^{(1)}\otimes \sW_{\lambda^t,\emptyset}^{(2)}
	\\
	\sPsi^{(\alpha\beta)}_{\widetilde{\mathrm{aa}}} & := \sum_{\lambda} (-1)^{|\lambda|} \sW^{(\alpha)}_{\lambda,\emptyset}\otimes \sW^{(\beta)}_{\emptyset,\lambda^t}\,,
\end{split}
\ee
where $q^{-\rho} = (q^{1/2}, q^{3/2}, q^{5/2},\dots)$ is the infinite set of variables giving the principal specialization of Schur functions.
We also introduce the following products
\be
\begin{split}
	\sW_{\lambda,\emptyset}\star_f \sW_{\mu,\emptyset} & : = \sum_{\nu} q^{\frac{f}{2}(\kappa(\nu) - \kappa(\lambda)-\kappa(\mu))} \sW_{\nu,\emptyset}\,, \qquad (f\in \mathbb{Z})
	\\
	\sW_{\lambda,\emptyset}\odot \sW_{\emptyset,\bar\mu} & := \sW_{\lambda,\bar\mu}\,.
\end{split}
\ee
Here, we sometimes simplify notation and wtite $\star:=\star_0$. Note that the product $\star_f$ is associative only when $f=0$.

We then observe that the two-brane specialization of $\sZ$ from Theorem \ref{count at infinity intro} admits the following description
\be\label{eq:Z-vertex-2brane}
\begin{split}
	\sZ^{(12)} & = 
	\sum_{\lambda_1,\lambda_2}
	\mathcal{T}_{\lambda_1\lambda_2\emptyset}(q) \sW^{(1)}_{\lambda_1,\emptyset}\otimes \sW^{(2)}_{\lambda_2,\emptyset}
	=
	\sPsi_{\rm{aa};(-1,0)}^{(12)}\star_{(-1,0)}(\sPsi^{(1)}_{\rm{di}}\otimes \sPsi^{(2)}_{\rm{di}})
	\\
	& = 
	\sum_{\alpha_1,\alpha_2,\beta}
	s_{\alpha_1}(q^{-\rho})\, s_{\alpha_2}(q^{-\rho})
	q^{-\kappa(\beta)/2}
	(\sW^{(1)}_{\alpha_1}\star_{-1}\sW^{(1)}_{\beta})\otimes (W^{(2)}_{\alpha_2}\star_{0}W^{(2)}_{\beta^t})
	\\
	& = 
	\sum_{\lambda_1,\lambda_2} q^{-\kappa(\lambda_2)}H_{\lambda_1,\lambda_2^t}\;\sW_{\lambda_1,\emptyset}^{(1)}\otimes\sW_{\lambda_2,\emptyset}^{(2)}\,,
\end{split}
\ee
where
 $H_{\alpha,\beta}=H_{\beta,\alpha}$ coincides with the extremal HOMFLYPT polynomial of the Hopf link, as computed in \cite{Ekholm:2024ceb}. It is given by
\be
\begin{split}
	H_{\lambda,\mu^t} 
	= 
	q^{(\kappa(\mu)-\kappa(\lambda))/2}
	\sum_{\eta}
	s_{\lambda^t/\eta}(q^{-\rho})
	s_{\mu/\eta}(q^{-\rho})
	= 
	q^{-\kappa(\lambda)/2}
	s_{\lambda^t}(q^{-\rho})
	s_{\mu^t}(q^{-\rho-\lambda})\,.
\end{split}
\ee

\subsection{Filling 1}
The partition function is the one given in the main text. We restate it here for completeness:
\be
	\sZ_{\mathrm{F}1} = \sum_{\lambda_1,\lambda_2,\lambda_3}
	\mathcal{T}_{\lambda_1,\lambda_2,\lambda_3}(q)
	\,
	\sW^{(1)}_{\lambda_1,\emptyset}
	\otimes
	\sW^{(2)}_{\lambda_2,\emptyset}
	\otimes
	\sW^{(3)}_{\lambda_3,\emptyset},
\ee
where
\be
\begin{split}
	\mathcal{T}_{\lambda_1,\lambda_2,\lambda_3}(q)
	& = 
	\frac{q^{-(\kappa(\lambda_2)+\kappa(\lambda_3))/2}}{H_{\lambda_2,\emptyset}}\,
	\sum_{\mu_1,\mu_3,\nu} c^{\lambda_1}_{\nu \mu_1}c^{\lambda_3^t}_{\nu \mu_3^t}
	H_{\lambda_2^t,\mu_1}\,
	H_{\lambda_2,\mu_3^t}
	\\
	& = q^{-(\kappa(\lambda_1)+\kappa(\lambda_3))/2}
	s_{\lambda_3^t}(q^{-\rho})
	\sum_{\eta}
	s_{\lambda_1^t/\eta}(q^{-\rho-\lambda_3})
	s_{\lambda_2/\eta}(q^{-\rho-\lambda_3^t}).
\end{split}
\ee
It is the unique solution to the skein-valued recursion defined by the following operators with initial conditions corresponding to non-negative homology classes in all three Lagrangians
\be\label{eq:Reeb12STST3-A-relations}
\begin{split}
	\sA_{1}^{(\mathrm{F}1)}
	&=
	\left(
	\sP_{0,0}^{(1)}  - \sP_{1,0}^{(1)} -  a_{1} a_3^2  \,\sP_{0,1}^{(1)}
	\right)
	- a_1 a_2 a_3^2 
	\left(
	\sP_{-1,0}^{(2)}  - \sP_{0,0}^{(2)} -  a_3^{-2}  \,\sP_{-1,1}^{(2)}
	\right)
	\\
	&
	- a_1
	\left(
	\sP_{0,-1}^{(3)}  - a_3 \sP_{1,-1}^{(3)} -  a_3  \,\sP_{0,0}^{(3)}
	\right)
	\\
	\sA_{2}^{(\mathrm{F}1)}
	&=
	\left(
	\sP_{0,0}^{(2)}  - \sP_{1,0}^{(2)} -  a_{2} a_1^2  \,\sP_{0,1}^{(2)}
	\right)
	- a_2 a_3 a_1^2 
	\left(
	\sP_{-1,0}^{(3)}  - \sP_{0,0}^{(3)} -  a_1^{-2}  \,\sP_{-1,1}^{(3)}
	\right)
	\\
	&
	- a_2
	\left(
	\sP_{0,-1}^{(1)}  - a_1 \sP_{1,-1}^{(1)} -  a_1  \,\sP_{0,0}^{(1)}
	\right)
	\\
	\sA_{3}^{(\mathrm{F}1)}
	&= 
	 \left(
	\sP_{0,0}^{(3)}  - \sP_{1,0}^{(3)} -  a_{3} a_2^2  \,\sP_{0,1}^{(3)}
	\right)
	- a_3 a_1 a_2^2 
	\left(
	\sP_{-1,0}^{(1)}  - \sP_{0,0}^{(1)} -  a_2^{-2}  \,\sP_{-1,1}^{(1)}
	\right)
	\\
	&
	- a_3
	\left(
	\sP_{0,-1}^{(2)}  - a_2 \sP_{1,-1}^{(2)} -  a_2  \,\sP_{0,0}^{(2)}
	\right)
\end{split}
\ee

\subsection{Filling 2}
For this filling, the skein-valued partition function is
\be\label{eq:ZReeb-12ST3-reduced}
\begin{split}
	\sZ_{\mathrm{F}2} 
	& = 
	\sPsi^{(12)}_{\widetilde{\mathrm{aa}}}\odot \sZ^{(23)}
	= 
	\sum_{\lambda_1,\lambda_2,\lambda_3}
	(-1)^{|\lambda_1|}
	q^{- \kappa(\lambda_3)/2}
	\,
	H_{\lambda_2, \lambda_3^t}
	\sW^{(1)}_{\lambda_1,\emptyset}
	\otimes
	\sW^{(2)}_{\lambda_2,\lambda_1^t}
	\otimes
	\sW^{(3)}_{\lambda_3,\emptyset}
	\\
\end{split}
\ee
where $\sZ^{(\alpha\beta)}$ is given by \eqref{eq:Z-vertex-2brane} with a cyclic shift of labels $1\to 2\to 3\to 1$.
This is the unique solution to the following operator equations (which are obtained from $\sA_i$ by application of \eqref{eq:ST} to $\Sk(\Lambda_3)$)
\be\label{eq:Reeb12ST3-A-relations}
\begin{split}
	&
	\sA_{1}^{(\mathrm{F}2)} =  \left(\sP_{0,0}^{(1)}-\sP_{1,0}^{(1)}-a_1 a_2^2  \sP_{0,1}^{(1)}\right)
	+a_1 a_2\left(\sP_{0,0}^{(2)}-\sP_{1,0}^{(2)}-a_2 \sP_{0,1}^{(2)}\right)
	\\
	&
	-a_1 a_2^2 a_3\left(\sP_{-1,0}^{(3)}-\sP_{0,0}^{(3)}- \sP_{-1,1}^{(3)}\right)\\
	&
	\sA_{2}^{(\mathrm{F}2)} = a_1 a_2^2 a_3^2 \left( \sP^{(1)}_{-1,0} - \sP^{(1)}_{0,0} - a_3^{-2} \sP^{(1)}_{-1,1} \right)
	+ a_2 a_3^2 \left( \sP^{(2)}_{-1,0} - \sP^{(2)}_{0,0} - a_3^{-2} \sP^{(2)}_{-1,1} \right)
	\\
	&
	+ \left( \sP^{(3)}_{0,-1} - a_3 \sP^{(3)}_{1,-1} -  a_3 \sP^{(3)}_{0,0} \right)
	\\
	&
	\sA_{3}^{(\mathrm{F}2)} = a_3 \left( \sP^{(1)}_{0,-1} - a_1 \sP^{(1)}_{1,-1} - a_1 a_2^2  \sP^{(1)}_{0,0} \right)
	+ a_3 \left( \sP^{(2)}_{0,-1} - a_1^2 a_2  \sP^{(2)}_{1,-1} - a_2  \sP^{(2)}_{0,0} \right)
	\\
	&
	- \left( \sP^{(3)}_{0,0} - \sP^{(3)}_{1,0} - a_1^2 a_2^2 a_3 \sP^{(3)}_{0,1} \right)
	\\
\end{split}
\ee
and that satisfies the initial conditions of the following ansatz
\be
\begin{cases}	
    \sZ_{\mathrm{F}2} = \quad &\sum_{\lambda_1,\lambda_2,\bar\mu_2,\lambda_3} 
	\mathcal{U}^{(\mathrm{F}2)}_{\lambda_1,\lambda_2,\bar\mu_2,\lambda_3}(q)
	\sW^{(1)}_{\lambda_1,\emptyset}\otimes \sW^{(2)}_{\lambda_2,\bar\mu_2}\otimes \sW^{(3)}_{\lambda_3,\emptyset},\\

     & \vspace{-2mm} \\
    
	&  \mathcal{U}^{(\mathrm{F}2)}_{\lambda_1,\lambda_2,\bar\mu_2,\lambda_3}(q)=0 \text{ if } |\bar\mu_2| > |\lambda_1|\,.
\end{cases}
\ee

The $U(1)$ specialization of the partition function is
\be\label{eq:Reeb21-ST-Z-U1}
\begin{split}
	Z_{\mathrm{F}2}
	& = \sum_{n_1, n_2, n_3\geq 0}\sum_{m_1, m_2,m_3=0}^{1}
	q^{- m_1 (n_2-m_3)- m_2 (n_1+m_3)}
	\\
	& \qquad\times \left(-\frac{x_1}{x_2}\right)^{m_3} 
	(x_2 x_3)^{m_1}
	(q x_3 x_1)^{m_2}
	\frac{(q^{1/2} x_1)^{n_1}}{(q;q)_{n_1}}
	\frac{(q^{1/2} x_2)^{n_2}}{(q;q)_{n_2}}
	\frac{(q^{1/2} x_3)^{n_3}}{(q;q)_{n_3}}
\end{split}
\ee
while $\sA_i^{(\mathrm{F}2)}$ specializes to the following 
\be
\begin{split}
	& 
	\hat A_{1}^{(\mathrm{F}2)} = (1-\hat y_1 - q^{3/2} \hat x_1) 
	+ q (1-\hat y_2 - q^{1/2}\hat x_2)
	-q^2 \hat y_3^{-1} (1-\hat y_3-q^{1/2} \hat x_3),
	\\
	&
	\hat A_{2}^{(\mathrm{F}2)} =q^{5/2} \hat y_1^{-1} (1-\hat y_1 - q^{-1/2} \hat x_1) 
	+ q^{3/2} \hat y_2^{-1}  (1-\hat y_2 - q^{-1/2} \hat x_2)
	+ \hat x_3^{-1} (1-\hat y_3-q^{1/2} \hat x_3),
	\\
	&
	\hat A_{3}^{(\mathrm{F}2)} =q^{1/2}\hat x_1^{-1} (1-\hat y_1 - q^{3/2} \hat x_1) 
	+  q^{1/2}\hat x_2^{-1}  (1-q \hat y_2 - q^{1/2} \hat x_2)
	-  (1-\hat y_3-q^{5/2} \hat x_3).
	\\
\end{split}
\ee

\subsection{Filling 3}
For this filling, the skein-valued partition function is
\be\label{eq:ZReeb-12STST3-reduced}
\begin{split}
	\sZ_{\mathrm{F}3} 
	& = 
	\sPsi^{(12)}_{\widetilde{\mathrm{aa}}}\odot \sZ^{(32)}
	= 
	\sum_{\lambda_1,\lambda_2,\lambda_3}
	(-1)^{|\lambda_1|}
	q^{-\kappa(\lambda_2)/2}
	\,
	H_{\lambda_3,\lambda_2^t}
	\sW^{(1)}_{\lambda_1,\emptyset}
	\otimes
	\sW^{(2)}_{\lambda_2,\lambda_1^t}
	\otimes
	\sW^{(3)}_{\lambda_3,\emptyset},
	\\
\end{split}
\ee
where $\sZ^{(32)}$ is given by \eqref{eq:Z-vertex-2brane} with the change of labels $2\to2,  1\leftrightarrow 3$.
This is the unique solution to the following operator equations (which are obtained from $\sA_i$ by applying \eqref{eq:ST} twice to $\Sk(\Lambda_3)$)
\be\label{eq:Reeb12STST3-A-relations}
\begin{split}
	&
	\sA_{1}^{(F3)} = \left(\sP_{0,0}^{(1)}-\sP_{1,0}^{(1)}-a_1 a_2^2 a_3^2 \sP_{0,1}^{(1)}\right)
	+a_1 a_2\left(\sP_{0,0}^{(2)}-\sP_{1,0}^{(2)}-a_2 a_3^2 \sP_{0,1}^{(2)}\right)
	\\
	&
	-a_1 a_2^2 \left(\sP_{0,-1}^{(3)}-a_3 \sP_{1,-1}^{(3)}- a_3 \sP_{0,0}^{(3)}\right)
	\\
	& 
	\sA_{2}^{(F3)} = a_1 a_2^2 a_3^2 \left( \sP^{(1)}_{-1,0} - \sP^{(1)}_{0,0} -  \sP^{(1)}_{-1,1} \right)
	+ a_2 a_3^2 \left( \sP^{(2)}_{-1,0} - \sP^{(2)}_{0,0} -  \sP^{(2)}_{-1,1} \right)
	\\
	&
	- a_3 \left( \sP^{(3)}_{0,0} -  \sP^{(3)}_{1,0} -  a_3 \sP^{(3)}_{0,1} \right)
	\\
	& 
	\sA_{3}^{(F3)} = \left( \sP^{(1)}_{0,-1} - a_1 \sP^{(1)}_{1,-1} - a_1 a_2^2  \sP^{(1)}_{0,0} \right)
	+ \left( \sP^{(2)}_{0,-1} - a_1^2 a_2  \sP^{(2)}_{1,-1} - a_2  \sP^{(2)}_{0,0} \right)
	\\
	&
	+ a_1^2 a_2^2 a_3 \left( \sP^{(3)}_{-1,0} - \sP^{(3)}_{0,0} - a_1^{-2} a_2^{-2} \sP^{(3)}_{-1,1} \right)
	\\
\end{split}
\ee
and that satisfies the initial conditions of the following ansatz
\be
\begin{cases}
    \sZ_{\mathrm{F}3} = \quad &\sum_{\lambda_1,\lambda_2,\bar\mu_2,\lambda_3} 
	\mathcal{U}^{(\mathrm{F}3)}_{\lambda_1,\lambda_2,\bar\mu_2,\lambda_3}(q)
	\sW^{(1)}_{\lambda_1,\emptyset}\otimes \sW^{(2)}_{\lambda_2,\bar\mu_2}\otimes \sW^{(3)}_{\lambda_3,\emptyset},\\
	
     & \vspace{-2mm} \\
     
	& \mathcal{U}^{(\mathrm{F}3)}_{\lambda_1,\lambda_2,\bar\mu_2,\lambda_3}(q)=0 \text{ if } |\bar\mu_2| > |\lambda_1|\,.
\end{cases}
\ee

The $U(1)$ specialization of the partition function is
\be\label{eq:Reeb21-ST2-Z-U1}
\begin{split}
	Z_{\mathrm{F}3}
	& = \sum_{n_1, n_2, n_3\geq 0}\sum_{m_1, m_2,m_3=0}^{1}
	q^{-(m_1+m_2) n_3}
	q^{
	-2 m_1 m_2 -  m_2 m_3 +  m_1 m_3
	}
	\\
	& \qquad\times \left(-\frac{x_1}{x_2}\right)^{m_3} 
	(x_2 x_3)^{m_1}
	(q x_3 x_1)^{m_2}
	\frac{(q^{1/2} x_1)^{n_1}}{(q;q)_{n_1}}
	\frac{(q^{1/2} x_2)^{n_2}}{(q;q)_{n_2}}
	\frac{(q^{1/2} x_3)^{n_3}}{(q;q)_{n_3}}
\end{split}
\ee
while $\sA_i^{(\mathrm{F}3)}$ specialize to the following 
\be
\begin{split}
	& 
	\hat A_{1}^{(\mathrm{F}3)} = 
	(1-\hat y_1 - q^{5/2} \hat x_1) 
	+ q (1-\hat y_2 - q^{3/2}\hat x_2)
	-q^{3/2} \hat x_3^{-1} (1-\hat y_3-q^{1/2} \hat x_3)
	\\
	& 
	\hat A_{2}^{(\mathrm{F}3)} = 
	q^{5/2} \hat y_1^{-1} (1-\hat y_1 - q^{1/2} \hat x_1) 
	+ q^{3/2} \hat y_2^{-1}  (1-\hat y_2 - q^{1/2} \hat x_2)
	- q^{1/2} (1-\hat y_3-q^{1/2} \hat x_3)
	\\
	&
	\hat A_{3}^{(\mathrm{F}3)} = 
	\hat x_1^{-1} (1-\hat y_1 - q^{3/2} \hat x_1) 
	+  \hat x_2^{-1}  (1-q \hat y_2 - q^{1/2} \hat x_2)
	+ q^{5/2} \hat y_3^{-1} (1-\hat y_3-q^{-3/2} \hat x_3)
	\\
\end{split}
\ee

\subsection{Filling 4}
For this filling, the skein-valued partition function can be written as the product of unlinked twisted anti-annuli and disks:
\be\label{eq:Z123-Reeb-same-leg}
\begin{split}
	\sZ_{\mathrm{F}4} 
	&=  
	\sPsi_{\widetilde{\mathrm{aa}}}^{(12)}
	\star \sPsi_{\widetilde{\mathrm{aa}}}^{(23)}
	\star \sPsi_{\widetilde{\mathrm{aa}}}^{(13)}
	\star \left[\sPsi_{\mathrm{di}}^{(1)} \otimes \sPsi_{\mathrm{di}}^{(2)} \otimes \sPsi_{\mathrm{di}}^{(3)} \right]
	\\
	& = 
	\prod_{ij\in\{12,23,13\}}\left(\sum_{\mu_{ij}} (-1)^{|\mu_{ij}|} \sW^{(i)}_{\mu_{ij},\emptyset}\otimes \sW^{(j)}_{\emptyset,\bar \mu_{ij}^t} \right)
	\star
	\prod_{k\in\{1,2,3\}}
	\left(\sum_{\lambda_k}
	s_{\lambda_k}(q^{-\rho})
	\sW_{\lambda_k,\emptyset}^{(k)}
	 \right),
	\\
\end{split}
\ee
or, alternatively, in the more compact form using the `gluing' $\odot$ product:
\be
\begin{split}
	\sZ_{\mathrm{F}4}
	& = \sPsi_{\widetilde{\mathrm{aa}}}^{(12)}\odot \sPsi_{\widetilde{\mathrm{aa}}}^{(23)} \odot \sPsi^{(3)}_{\mathrm{di}} 
	\\
	& = 
	\sum_{\lambda_1,\lambda_2,\lambda_3}
	(-1)^{|\lambda_1|+|\lambda_2|} 
	s_{\lambda_3}(q^{-\rho})
	\,
	\sW^{(1)}_{\lambda_1,\emptyset}
	\otimes
	\sW^{(2)}_{\lambda_2,\lambda_1^t}
	\otimes
	\sW^{(3)}_{\lambda_3,\lambda_2^t}.
\end{split}
\ee

This is the unique solution to the following operator equations (which are obtained by applying the change of basis  \eqref{eq:ST}, twice in $\Sk(\Lambda_2)$ and once in $\Sk(\Lambda_3)$) 
\be\label{eq:Reeb-3-skein-recursions}
\begin{split}
	& 
	\sA_{1}^{(\mathrm{F}4)} = \left( \sP^{(1)}_{0,0} - \sP^{(1)}_{1,0} - a_1 a_2^2 a_3^2 \sP^{(1)}_{0,1} \right)
	+ a_1 a_2 \left( \sP^{(2)}_{0,0} - \sP^{(2)}_{1,0} - a_2 a_3^2 \sP^{(2)}_{0,1} \right)
	\\
	&+ a_1 a_2^2 a_3 \left( \sP^{(3)}_{0,0} - \sP^{(3)}_{1,0} - a_3 \sP^{(3)}_{0,1} \right)
	\\
	& 
	\sA_{2}^{(\mathrm{F}4)} = a_1 a_2^2 a_3 \left( \sP^{(1)}_{-1,0} - \sP^{(1)}_{0,0} -  \sP^{(1)}_{-1,1} \right)
	+ a_2 a_3 \left( \sP^{(2)}_{-1,0} - \sP^{(2)}_{0,0} -  \sP^{(2)}_{-1,1} \right)
	\\
	&+ \left( \sP^{(3)}_{-1,0} - \sP^{(3)}_{0,0} -  \sP^{(3)}_{-1,1} \right)
	\\
	& 
	\sA_{3}^{(\mathrm{F}4)} = \left( \sP^{(1)}_{0,-1} - a_1 \sP^{(1)}_{1,-1} - a_1 a_2^2 a_3^2 \sP^{(1)}_{0,0} \right)
	+ \left( \sP^{(2)}_{0,-1} - a_1^2 a_2  \sP^{(2)}_{1,-1} - a_2 a_3^2 \sP^{(2)}_{0,0} \right)
	\\
	&+ \left( \sP^{(3)}_{0,-1} - a_1^2 a_2^2 a_3  \sP^{(3)}_{1,-1} - a_3 \sP^{(3)}_{0,0} \right)
	\\
\end{split}
\ee
and that satisfies the initial conditions of the following ansatz
\be
\begin{cases}
\sZ_{\mathrm{F}4} \ = & 
	\sum_{\lambda_1,\lambda_2,\bar\mu_2,\lambda_3,\bar\mu_3} \;\;
	\mathcal{U}^{(\mathrm{F}4)}_{\lambda_1,\lambda_2,\bar\mu_2,\lambda_3,\bar\mu_3}(q)
	\sW^{(1)}_{\lambda_1,\emptyset}\otimes \sW^{(2)}_{\lambda_2,\bar\mu_2}\otimes \sW^{(3)}_{\lambda_3,\bar\mu_3},\\

     & \vspace{-2mm} \\

	& \mathcal{U}^{(F4)}_{\lambda_1,\lambda_2,\bar\mu_2,\lambda_3,\bar\mu_3}(q)=0 
	\text{ if } |\bar\mu_2| > |\lambda_1|
	\text{ or } |\bar\mu_3| > |\lambda_1|+|\lambda_2|.
\end{cases}
\ee

\begin{remark}
    In fact, relation $\sA_1^{(\mathrm{F}4)}$ alone determines the partition function with the given initial conditions,  
    and the three relations are related by a simultaneous change of capping paths at infinity. 
\end{remark}

\begin{remark}
    It is possible to derive the expression 
    $$
    \sZ_{\mathrm{F}4} \ = \ \sPsi_{\widetilde{\mathrm{aa}}}^{(12)}
	\star \sPsi_{\widetilde{\mathrm{aa}}}^{(23)}
	\star \sPsi_{\widetilde{\mathrm{aa}}}^{(13)}
	\star \left[\sPsi_{\mathrm{di}}^{(1)} \otimes \sPsi_{\mathrm{di}}^{(2)} \otimes \sPsi_{\mathrm{di}}^{(3)} \right]
    $$ 
    by a direct geometric argument, as follows. After some explicit perturbation of $L_k$, one can show that the only embedded holomorphic curves are: one disk ending on each Lagrangian, and one annulus stretching between each pair.  Moreover, all the boundaries of these curves are on parallel copies of the longitude, so commute in the skein. Finally, the full multiple cover contributions of disks and annuli were determined in \cite{Ekholm:2020csl} and \cite{ekholm2021coloredhomflypt}, respectively.  
\end{remark}

The $U(1)$ specialization of $\sZ_{\mathrm{F}4}$ is
\be\label{eq:3-branes-Z-Reeb-same-leg}
	Z_{\mathrm{F}4}= (1-x_1/x_2) (1-x_1/x_3) (1-x_2/x_3) (x_1;q)_\infty^{-1}(x_2;q)^{-1}_\infty(x_3;q)^{-1}_\infty
\ee
while the $U(1)$ specialization of $\sA_{i}^{(\mathrm{F}4)}$ is
\be\label{eq:3-branes-samel-eg-Z3-rec-U1}
\begin{split}
	&
	\hat A_1^{(\mathrm{F}4)} = 
	(1-\hat y_1 - q^{5/2} \hat x_1)
	+ q (1 - \hat y_2 - q^{3/2} \hat x_2)
	+ q^2 (1-\hat y_3 - q^{1/2} \hat x_3),
	\\
	& 
	\hat A_2^{(\mathrm{F}4)} = 
	\hat y_1^{-1}(1-\hat y_1 - q^{1/2} \hat x_1)
	+ q^{-1}  \hat y_2^{-1}(1 - \hat y_2 - q^{1/2} \hat x_2)
	+ q^{-2} \hat y_3^{-1} (1-\hat y_3 - q^{1/2} \hat x_3),
	\\
	& 
	\hat A_3^{(\mathrm{F}4)} = 
	\hat x_1^{-1}(1-\hat y_1 - q^{5/2} \hat x_1)
	+  \hat x_2^{-1}(1 - q \hat y_2 - q^{3/2} \hat x_2)
	+  \hat x_3^{-1}(1- q^2\hat y_3 - q^{1/2} \hat x_3).
\end{split}
\ee

\bibliographystyle{hplain}
\bibliography{biblio}

\begin{thebibliography}{10}

\bibitem{ADKMV}
Mina Aganagic, Robbert Dijkgraaf, Albrecht Klemm, Marcos Marino, and Cumrun
  Vafa.
\newblock Topological strings and integrable hierarchies.
\newblock {\em Communications in mathematical physics}, 261:451--516, 2006.

\bibitem{Aganagic:2013jpa}
Mina Aganagic, Tobias Ekholm, Lenhard Ng, and Cumrun Vafa.
\newblock {Topological Strings, D-Model, and Knot Contact Homology}.
\newblock {\em Adv. Theor. Math. Phys.}, 18(4):827--956, 2014.

\bibitem{AKMV}
Mina Aganagic, Albrecht Klemm, Marcos Marino, and Cumrun Vafa.
\newblock {The Topological vertex}.
\newblock {\em Commun. Math. Phys.}, 254:425--478, 2005.

\bibitem{AV2}
Mina Aganagic and Cumrun Vafa.
\newblock Large {N} duality, mirror symmetry, and a {Q}-deformed {A}-polynomial
  for knots.
\newblock arXiv:1204.4709.

\bibitem{Aganagic:2000gs}
Mina Aganagic and Cumrun Vafa.
\newblock {Mirror symmetry, D-branes and counting holomorphic discs}.
\newblock hep-th/0012041.

\bibitem{Assaf-McNamara}
Sami Assaf and Peter McNamara.
\newblock A {P}ieri rule for skew shapes.
\newblock {\em Journal of Combinatorial Theory, Series A}, 118(1):277--290,
  2011.

\bibitem{BEHWZ}
F.~Bourgeois, Y.~Eliashberg, H.~Hofer, K.~Wysocki, and E.~Zehnder.
\newblock Compactness results in symplectic field theory.
\newblock {\em Geom. Topol.}, 7:799--888, 2003.

\bibitem{CELN}
Kai Cieliebak, Tobias Ekholm, Janko Latschev, and Lenhard Ng.
\newblock Knot contact homology, string topology, and the cord algebra.
\newblock {\em J. \'Ec. polytech. Math.}, 4:661--780, 2017.

\bibitem{Ekholmflowtree}
Tobias Ekholm.
\newblock Morse flow trees and {L}egendrian contact homology in 1-jet spaces.
\newblock {\em Geom. Topol.}, 11:1083--1224, 2007.

\bibitem{EkholmRSFT}
Tobias Ekholm.
\newblock Rational symplectic field theory over {$\Bbb Z_2$} for exact
  {L}agrangian cobordisms.
\newblock {\em J. Eur. Math. Soc. (JEMS)}, 10(3):641--704, 2008.

\bibitem{Ekholm-Lekili}
Tobias Ekholm and Yank{\i} Lekili.
\newblock Duality between {L}agrangian and {L}egendrian invariants.
\newblock {\em Geometry \& Topology}, 27(6):2049--2179, 2023.

\bibitem{Ekholm:2024ceb}
Tobias Ekholm, Pietro Longhi, and Lukas Nakamura.
\newblock {The worldsheet skein D-module and basic curves on Lagrangian
  fillings of the Hopf link conormal}.
\newblock arxiv:2407.09836.

\bibitem{Ekholm:2018iso}
Tobias Ekholm and Lenhard Ng.
\newblock {Higher genus knot contact homology and recursion for colored
  HOMFLY-PT polynomials}.
\newblock {\em Adv. Theor. Math. Phys.}, 24(8):2067--2145, 2020.

\bibitem{ekholm2021coloredhomflypt}
Tobias Ekholm and Vivek Shende.
\newblock {Colored HOMFLYPT counts holomorphic curves}, arXiv:2101.00619.

\bibitem{bare}
Tobias Ekholm and Vivek Shende.
\newblock Counting bare curves.
\newblock arXiv:2406.00890.

\bibitem{ghost}
Tobias Ekholm and Vivek Shende.
\newblock Ghost bubble censorship.
\newblock arXiv:2212.05835.

\bibitem{Ekholm:2020csl}
Tobias Ekholm and Vivek Shende.
\newblock {Skein recursion for holomorphic curves and invariants of the
  unknot}.
\newblock arXiv:2012.15366.

\bibitem{SoB}
Tobias Ekholm and Vivek Shende.
\newblock Skeins on branes, arXiv:1901.08027.

\bibitem{EGH}
Y.~Eliashberg, A.~Givental, and H.~Hofer.
\newblock Introduction to symplectic field theory.
\newblock pages 560--673. 2000.
\newblock GAFA 2000 (Tel Aviv, 1999).

\bibitem{GPS2}
Sheel Ganatra, John Pardon, and Vivek Shende.
\newblock Sectorial descent for wrapped {F}ukaya categories.
\newblock {\em Journal of the American Mathematical Society}, 37(2):499--635,
  2024.

\bibitem{Hadji-Morton}
Richard Hadji and Hugh Morton.
\newblock A basis for the full {H}omfly skein of the annulus.
\newblock {\em Mathematical Proceedings of the Cambridge Philosophical
  Society}, 141(1):81--100, 2006.

\bibitem{HWZGWpolyfolds}
Helmut Hofer, Krzysztof Wysocki, and Eduard Zehnder.
\newblock Applications of polyfold theory {I}: {T}he polyfolds of
  {G}romov-{W}itten theory.
\newblock {\em Mem. Amer. Math. Soc.}, 248(1179), 2017.

\bibitem{MelissaandCo}
Jun Li, Chiu-Chu~Melissa Liu, Kefeng Liu, and Jian Zhou.
\newblock A mathematical theory of the topological vertex.
\newblock {\em Geom. Topol.}, 13(1):527--621, 2009.

\bibitem{MNOP}
Davesh Maulik, Nikita Nekrasov, Andrei Okounkov, and Rahul Pandharipande.
\newblock {Gromov--Witten theory and Donaldson--Thomas theory, I}.
\newblock {\em Compositio Mathematica}, 142(5):1263--1285, 2006.

\bibitem{MOOP}
Davesh Maulik, Alexei Oblomkov, Andrei Okounkov, and Rahul Pandharipande.
\newblock {Gromov-Witten/Donaldson-Thomas correspondence for toric 3-folds}.
\newblock {\em Inventiones mathematicae}, 186:435--479, 2011.

\bibitem{Morton-Samuelson}
Hugh Morton and Peter Samuelson.
\newblock {The HOMFLYPT skein algebra of the torus and the elliptic Hall
  algebra}.
\newblock {\em Duke Mathematical Journal}, 166(5):801--854, 2017.

\bibitem{Okounkov-Reshetikhin-Vafa}
Andrei Okounkov, Nikolai Reshetikhin, and Cumrun Vafa.
\newblock {Quantum Calabi-Yau and Classical Crystals}.
\newblock {\em Progress in Mathematics}, 244:597--618, 2006.

\bibitem{Ooguri:1999bv}
Hirosi Ooguri and Cumrun Vafa.
\newblock {Knot invariants and topological strings}.
\newblock {\em Nucl. Phys.}, B577:419--438, 2000.

\bibitem{Scharitzer-Shende}
Matthias Scharitzer and Vivek Shende.
\newblock Quantum mirrors of cubic planar graph {L}egendrians.
\newblock arXiv:2304.01872.

\end{thebibliography}

\end{document}